\begin{document}

\title{WSLD operators \uppercase\expandafter{\romannumeral2}: the new fourth order difference approximations for space Riemann-Liouville derivative}

\author{%
{\sc
Minghua Chen
and
Weihua Deng\thanks{Corresponding author. Email: dengwh@lzu.edu.cn}} \\[2pt]
School of Mathematics and Statistics,
Lanzhou University, Lanzhou 730000, P. R. China
}

\maketitle

\begin{abstract}
{ High order discretization schemes play more important role in fractional operators than classical ones. This is because usually for classical derivatives the stencil for high order discretization schemes is wider than low order ones; but for fractional operators the stencils for high order schemes and low order ones are the same. Then using high order schemes to solve fractional equations leads to almost the same computational cost with first order schemes but the accuracy is greatly improved. Using the  fractional linear multistep methods, Lubich
 obtains the $\nu$-th order ($\nu\leq 6$)  approximations of the $\alpha$-th derivative ($\alpha>0$) or integral ($\alpha<0$) [Lubich, SIAM J. Math. Anal., 17, 704-719, 1986], because of the stability issue the obtained scheme can not be directly applied to the space fractional operator with $\alpha \in(1,2)$ for time dependent problem. By weighting and shifting Lubich's 2nd order discretization scheme, in [Chen \& Deng, arXiv:1304.7425] we derive a series of effective high order discretizations for space fractional derivative, called WSLD opeartors there. As the sequel of the previous work, we further provide new high order schemes for space fractional derivatives by weighting and shifting Lubich's 3rd and 4th order discretizations.  In particular, we prove that the obtained 4th order approximations are effective for space fractional derivatives. And the corresponding schemes are used to solve the space fractional diffusion equation with variable coefficients.
}
{ Fractional derivatives; High order scheme; Weighted and shifted Lubich difference operators;  Numerical stability
}
\end{abstract}

\section{Introduction}
\label{sec;introduction}
Fractional calculus (i.e., integrals and derivatives
of any arbitrary real or even complex order) has attracted considerable attention during the past several decades, due mainly to its demonstrated
applications in seemingly diverse and widespread fields of science and
engineering [\cite{Kilbas:06}]; and fractional derivatives provide an excellent tool
for the description of memory and hereditary properties of various materials and processes [\cite{Podlubny:99}]. With the ubiquitous applications of fractional calculus, fractional partial differential equations (PDEs) appear naturally. Effectively solving fractional PDEs becomes urgent, and intrigues mathematicians. It is still possible to analytically solve the linear fractional PDEs with constant coefficients by using Laplace or Fourier transform, but most of the time the solutions are represented by infinite series or transactional functions.  Without doubt, the new challenges also exist in numerically solving fractional PDEs; but some basic ideas have been developed, for instance,  finite difference method [\cite{Meerschaert:04,Sousa:12,Sun:06,Tian:12,Zhuang:09}]; finite element method [\cite{Ervin:06,Deng:2008}]; spectral method [\cite{Li:09}].

In numerically solving fractional PDEs, besides a little bit complex numerical analysis, the big challenge comes from the computational cost caused by the nonlocal properties of fractional operators. High order scheme is a natural idea to reduce the challenge of cost. Comparing with first order schemes, the high order schemes for fractional operators do not increase computational cost but greatly improve the accuracy. The reason is that both the derived matrixes corresponding to the higher order schemes and low order schemes are full and have the same structure [\cite{Deng:13}]. In fact, there are already some important progresses for the high order discretizations of fractional derivatives, including WSGD opeartor [\cite{Tian:12}], CWSGD opeartor [\cite{Zhou:13}], second order discretization [\cite{Sousa:12}], second order discretization for Riesz fractional derivative [\cite{Ortigueira:06}], and WSLD operator [\cite{Chen:13}].  This paper is the sequel of [\cite{Chen:13}], i.e,  based on Lubich's 3rd and 4th operators to provide new high order discretization schemes for space fractional derivatives.

 Using the  fractional linear multistep methods,
\cite{Lubich:86} obtains the $\nu$-th order ($\nu\leq 6$)  approximations of the $\alpha$-th derivative ($\alpha>0$) or integral ($\alpha<0$) by the corresponding coefficients of the generating functions $\delta ^{\alpha}(\zeta)$, where
\begin{equation}\label{1.1}
\delta^{\alpha}(\zeta) = \left(\sum_{i=1}^\nu\frac{1}{i}(1-\zeta)^i\right)^{\alpha}.
\end{equation}
 For $\alpha=1$,  the scheme reduces to the classical $(\nu+1)$-point backward difference formula  [\cite{Henrici:62}]. For $\nu=1$, $\alpha>0$, the scheme (\ref{1.1}) corresponds to the standard Gr\"{u}nwald discretization of $\alpha$-th derivative with first order accuracy;
unfortunately, for the time dependent equations all the difference discretizations are unstable. By weighting and shifting Lubich's 2nd order discretization, a class of effective high order schemes for space fractional derivatives are presented [\cite{Chen:13}]. Is it possible to design the high order schemes for space fractional derivatives by using Lubich's 3rd, 4th, 5th, 6th order operators?  This paper will answer that at least by applying Lubich's 3rd, 4th order operators, the new discretizations for space fractional derivatives can be constructed. The concrete discretizations will be presented, and the effectiveness of 4th order schemes for space fractional derivative will be proved. And we will also provide a simple application to solve the space fractional diffusion equation with variable coefficients.

The  outline of this paper is as follows.
In Section 2, we derive the new fourth order approximations for space fractional Riemann-Liouville derivatives, being
 effective in solving space fractional PDEs. A simple application of the new discretization schemes are presented in Section 3 to solve the space fractional diffusion equation with variable coefficients. And in Section 4, the numerical experiments are performed to show the effectiveness of the algorithm and verify the theoretical results. Finally, the paper is concluded with some remarks in the last section.

\section{Derivation of new fourth order discretizations for space fractional operators}\label{sec:1}
Based on Lubich's 3rd and 4th discretizations, we derive new fourth order approximations for Riemann-Liouville derivative, and prove that they are effective in solving space fractional PDEs, i.e., all the eigenvalues of the matrixes corresponding to the discretized operators have negative real parts.


\begin{definition}[\cite{Podlubny:99}]\label{definition2.7}
The $\alpha$-th ($n-1<\alpha<n$) order left and right Riemann-Liouville  fractional
derivatives of the function $u(x)$ on $[x_L,x_R]$, $-\infty \leq x_L <x_R \leq \infty$ are, respectively, defined by
\begin{equation*}
 _{x_L}D_x^{\alpha}u(x)=
\frac{1}{\Gamma(n-\alpha)} \displaystyle \frac{\partial^n}{\partial x^n}
 \int_{x_L}\nolimits^x{\left(x-\xi\right)^{n-\alpha-1}}{u(\xi)}d\xi,
\end{equation*}
and
\begin{equation*}
 _{x}D_{x_R}^{\alpha}u(x)=
 \frac{1}{\Gamma(n-\alpha)}\frac{\partial^n}{\partial x^n}
\int_{x}\nolimits^{x_R}{\left(\xi-x\right)^{n-\alpha-1}}{u(\xi)}d\xi.
\end{equation*}
\end{definition}

\begin{lemma} [\cite{Ervin:06}] \label{lemma2.1}
 Let $\alpha>0$, $u \in C_0^{\infty}(\Omega)$, $\Omega \subset \mathbb{R}$, then
 \begin{equation*}
   \begin{split}
 &\mathcal{F}(_{-\infty}D_x^{\alpha}u(x))=(-i\omega)^{\alpha}\widehat{u}(\omega)~~{\rm and}~~
  \mathcal{F}(_{x}D_{\infty}^{\alpha}u(x))=(i\omega)^{\alpha}\widehat{u}(\omega),
   \end{split}
 \end{equation*}
 where $\mathcal{F}$ denotes Fourier transform operator and $\widehat{u}(\omega)=\mathcal{F}(u)$, i.e.,
 \begin{equation*}
    \widehat{u}(\omega)=\int_{\mathbb{R}}e^{i\omega x }u(x)dx.
 \end{equation*}
\end{lemma}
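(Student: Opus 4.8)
The plan is to reduce both identities to the convolution theorem for the Fourier transform together with the classical transform of the one-sided power function. I would first treat the left derivative. Introduce the kernel $g_{\beta}(x)=x_{+}^{\beta-1}/\Gamma(\beta)$ for $\beta>0$, where $x_{+}=\max\{x,0\}$, so that by Definition~\ref{definition2.7} the Riemann--Liouville integral of order $n-\alpha$ is exactly $g_{n-\alpha}\ast u$ and hence $_{-\infty}D_x^{\alpha}u=\frac{d^{n}}{dx^{n}}\big(g_{n-\alpha}\ast u\big)$. Since $u\in C_0^{\infty}(\Omega)$ has compact support and $g_{n-\alpha}\in L^1_{\mathrm{loc}}(\mathbb{R})$, differentiation under the integral sign is legitimate and all $n$ derivatives can be moved onto $u$, giving the pointwise identity $_{-\infty}D_x^{\alpha}u=g_{n-\alpha}\ast u^{(n)}$; because $u^{(n)}$ has $n$ vanishing moments, this function in fact decays like $|x|^{-\alpha-1}$ at $+\infty$, so it is integrable and its Fourier transform is an absolutely convergent integral.

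The key computation is $\mathcal{F}(g_{\beta})(\omega)=(-i\omega)^{-\beta}$ for $0<\beta<1$, where the power is taken on the branch obtained as the limit $\epsilon\downarrow0$ of $(\epsilon-i\omega)^{-\beta}$, i.e. $(-i\omega)^{-\beta}=|\omega|^{-\beta}e^{\mathrm{i}\pi\beta\,\mathrm{sgn}(\omega)/2}$; this follows from $\int_{0}^{\infty}x^{\beta-1}e^{-(\epsilon-i\omega)x}\,dx=\Gamma(\beta)(\epsilon-i\omega)^{-\beta}$ and a limiting argument in the sense of tempered distributions. Combining this with the convolution theorem $\mathcal{F}(g_{n-\alpha}\ast u^{(n)})=\widehat{g_{n-\alpha}}\cdot\widehat{u^{(n)}}$ and the differentiation rule $\mathcal{F}(u^{(n)})(\omega)=(-i\omega)^{n}\widehat{u}(\omega)$ — valid under the convention $\widehat{u}(\omega)=\int_{\mathbb{R}}e^{i\omega x}u(x)\,dx$ because the boundary terms vanish for compactly supported $u$ — I obtain $\mathcal{F}(_{-\infty}D_x^{\alpha}u)=(-i\omega)^{n}(-i\omega)^{-(n-\alpha)}\widehat{u}(\omega)=(-i\omega)^{\alpha}\widehat{u}(\omega)$, the last equality using additivity of exponents on the chosen branch.

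The right-derivative identity I would deduce by the reflection $x\mapsto-x$: writing $v(x)=u(-x)$, Definition~\ref{definition2.7} gives $_{x}D_{\infty}^{\alpha}u(x)=(-1)^{n}\big(_{-\infty}D_{y}^{\alpha}v\big)(-x)$, while the Fourier convention gives $\widehat{v}(\omega)=\widehat{u}(-\omega)$; feeding these into the left-derivative result and collecting the sign conventions yields $\mathcal{F}(_{x}D_{\infty}^{\alpha}u)(\omega)=(i\omega)^{\alpha}\widehat{u}(\omega)$, consistent with $(i\omega)^{\alpha}=\overline{(-i\omega)^{\alpha}}$ on the same branch.

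The main obstacle is not any single estimate but the careful bookkeeping of the branch of the complex power $(-i\omega)^{\alpha}$: one must pin down the branch arising from the regularized transform of $g_{\beta}$, verify that it is the one for which $(-i\omega)^{n}(-i\omega)^{-(n-\alpha)}=(-i\omega)^{\alpha}$ holds on all of $\mathbb{R}\setminus\{0\}$, and match it against the convention implicit in the statement (and likewise for $(i\omega)^{\alpha}$). Once the branch is fixed and the integrability of $g_{n-\alpha}\ast u^{(n)}$ is recorded, the remaining manipulations are routine.
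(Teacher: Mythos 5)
The paper does not prove this lemma at all; it is quoted verbatim from Ervin \& Roop (2006), so there is no in-paper argument to compare against. Your proposal is a correct, self-contained derivation of the classical fact: writing $_{-\infty}D_x^{\alpha}u=g_{n-\alpha}\ast u^{(n)}$ with $g_{\beta}(x)=x_{+}^{\beta-1}/\Gamma(\beta)$, invoking the regularized transform $\widehat{g_{\beta}}(\omega)=\lim_{\epsilon\downarrow 0}(\epsilon-i\omega)^{-\beta}$, and using the differentiation rule $\mathcal{F}(u^{(n)})=(-i\omega)^{n}\widehat{u}$ (which is the right sign for the convention $\widehat{u}(\omega)=\int e^{i\omega x}u\,dx$) gives exactly $(-i\omega)^{\alpha}\widehat{u}$; your observation that the vanishing moments of $u^{(n)}$ force $O(|x|^{-\alpha-1})$ decay, so that the Fourier transform is a genuine integral rather than only a distributional pairing, is the right point to make explicit, and the branch bookkeeping is handled correctly. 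One small caveat on the right derivative: Definition~\ref{definition2.7} as printed omits the customary factor $(-1)^{n}$ (equivalently $(-\partial/\partial x)^{n}$), and with that literal definition your own reflection identity $_{x}D_{\infty}^{\alpha}u(x)=(-1)^{n}\bigl(_{-\infty}D_{y}^{\alpha}v\bigr)(-x)$ would propagate an extra $(-1)^{n}$ into the conclusion, contradicting the stated formula $\mathcal{F}(_{x}D_{\infty}^{\alpha}u)=(i\omega)^{\alpha}\widehat{u}$ for odd $n$. The resolution is that the source (and the standard convention) includes the $(-1)^{n}$ in the definition; since the paper only ever uses $\alpha\in(1,2)$, i.e.\ $n=2$, the discrepancy is immaterial here, but you should state which convention you are using so that your two displayed identities for the right derivative are mutually consistent.
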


\begin{lemma}  \label{lemma2.3}
 Let the function $f(z)=\left(1+bz+cz^2+dz^3+ez^4\right)^\alpha$, then
$$f(0)=1,~~f'(0)=\alpha b,~~f''(0)=\alpha(\alpha-1)b^2+2\alpha c,$$
$$f'''(0)=\alpha(\alpha-1)(\alpha-2)b^3+6\alpha(\alpha-1)bc+6\alpha d,$$
$$f''''(0)=\alpha(\alpha-1)(\alpha-2)(\alpha-3)b^4+12\alpha(\alpha-1)(\alpha-2)b^2c+12\alpha(\alpha-1)c^2+24\alpha(\alpha-1)bd+24\alpha e,$$
and
 \begin{equation*}
    f(z)=f(0+z)=f(0)+zf'(0)+\frac{z^2}{2!}f''(0)+\frac{z^3}{3!}f'''(0)+\frac{z^4}{4!}f''''(0)+\mathcal{O}(z^5).
 \end{equation*}
\end{lemma}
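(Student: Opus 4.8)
The plan is to reduce everything to the generalized binomial series together with Taylor's formula with remainder. First I would observe that $g(z):=1+bz+cz^2+dz^3+ez^4$ is a polynomial with $g(0)=1$, hence $f=g^{\alpha}$ (principal branch) is analytic in a neighbourhood of $z=0$; consequently the last displayed identity is nothing but Taylor's theorem with remainder, and the whole task is to identify the first five Taylor coefficients of $f$ at $0$.

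Next, write $w(z):=bz+cz^2+dz^3+ez^4=\mathcal{O}(z)$, so that $f(z)=(1+w)^{\alpha}$, and use
\begin{equation*}
(1+w)^{\alpha}=1+\alpha w+\frac{\alpha(\alpha-1)}{2!}w^2+\frac{\alpha(\alpha-1)(\alpha-2)}{3!}w^3+\frac{\alpha(\alpha-1)(\alpha-2)(\alpha-3)}{4!}w^4+\mathcal{O}(w^5).
\end{equation*}
Since $w=\mathcal{O}(z)$ we have $w^5=\mathcal{O}(z^5)$, so the remainder term may be discarded. The remaining step is mechanical: expand $w^2,w^3,w^4$ and keep only terms up to $z^4$, namely $w^2=b^2z^2+2bc\,z^3+(c^2+2bd)z^4+\mathcal{O}(z^5)$, $w^3=b^3z^3+3b^2c\,z^4+\mathcal{O}(z^5)$, $w^4=b^4z^4+\mathcal{O}(z^5)$. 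Substituting these into the binomial expansion, collecting the coefficient of $z^k$ for $k=0,1,2,3,4$, and then multiplying that coefficient by $k!$ (which by Taylor's theorem equals $f^{(k)}(0)$) produces exactly the five claimed formulas.

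An equivalent route, useful as a cross-check, is to differentiate $f=g^{\alpha}$ directly: from $f'=\alpha g^{\alpha-1}g'$ one obtains a recursion for the higher derivatives, and evaluating at $z=0$ with $g(0)=1$, $g'(0)=b$, $g''(0)=2c$, $g'''(0)=6d$, $g''''(0)=24e$ (together with the Leibniz rule) reproduces the same expressions. I do not anticipate any genuine difficulty here — the single delicate point is the bookkeeping in the coefficient of $z^4$, where the contribution $\alpha e$ from $w$, the contribution $\frac{\alpha(\alpha-1)}{2}(c^2+2bd)$ from $w^2$, the contribution $\frac{\alpha(\alpha-1)(\alpha-2)}{2}b^2c$ from $w^3$, and the contribution $\frac{\alpha(\alpha-1)(\alpha-2)(\alpha-3)}{24}b^4$ from $w^4$ must all be combined and then scaled by $4!=24$; so I would carry out precisely that collection with extra care.
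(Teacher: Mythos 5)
Your proposal is correct and is essentially the paper's own argument: the paper disposes of this lemma with the single line ``it is easy to check by the Taylor series expansion,'' and your binomial-series computation (with the coefficient of $z^k$ scaled by $k!$) is exactly that check carried out explicitly; the bookkeeping for the $z^4$ coefficient as you describe it does reproduce the stated $f''''(0)$.
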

\begin{proof}
  It is easy to check that by the  Taylor series expansion.
\end{proof}

\subsection{Derivation of the discretizations}

In this subsection, based on Lubich's operator (\ref{1.1}), we do the expansions to get the formulas of the coefficients when $\nu=3,4,5$; by the technique of Fourier transform, prove that the operators have their respective desired convergent order; and by weighting and shifting Lubich's 3rd (and 4th) order operators, obtain new high order discretization schemes.

First, taking  $\nu=1$, for all $|\zeta| \leq 1$, Eq. (\ref{1.1}) becomes the following equation  [\cite{Podlubny:99}],
\begin{equation}\label{2.1}
\begin{split}
(1-\zeta)^{\alpha}= \sum_{m=0}^{\infty}(-1)^m\left ( \begin{matrix}\alpha \\ m\end{matrix} \right )\zeta^m= \sum_{m=0}^{\infty}{l}_m^{1,\alpha}\zeta^m,
\end{split}
\end{equation}
with the recursively formula
\begin{equation}\label{2.2}
  {l}_0^{1,\alpha}=1, ~~~~{l}_m^{1,\alpha}=\left(1-\frac{\alpha+1}{m}\right)l_{m-1}^{1,\alpha},~~m \geq 1.
\end{equation}

Letting  $\nu=2$, for all $|\zeta| \leq 1$, Eq. (\ref{1.1}) has the following form  [\cite{Chen:13}],
\begin{equation}\label{2.3}
\begin{split}
\left(\frac{3}{2}-2\zeta+\frac{1}{2}\zeta^2\right)^{\alpha} =\left(\frac{3}{2}\right)^{\alpha}(1-\zeta)^{\alpha}(1-\frac{1}{3}\zeta)^{\alpha} =
\sum_{j=0}^{\infty}{l}_j^{2,\alpha}\zeta^{j},
\end{split}
\end{equation}
with
\begin{equation}\label{2.4}
 {l}_j^{2,\alpha}
  =\left(\frac{3}{2}\right)^{\alpha} \sum_{m=0}^{j} \mu_2^{m}\,{l}_m^{1,\alpha}\,{l}_{j-m}^{1,\alpha}, 
\end{equation}
where $\mu_2=\frac{1}{3}$ and ${l}_m^{1,\alpha}$ is defined by (\ref{2.2}).

Setting $\nu=3$, for all $|\zeta| \leq 1$, Eq. (\ref{1.1}) leads to the following form 
\begin{equation}\label{2.5}
\begin{split}
&\left(\frac{11}{6}-3\zeta+\frac{3}{2}\zeta^2-\frac{1}{3}\zeta^3\right)^{\alpha}\\
&\quad =\left(1-\zeta\right)^{\alpha}\left(\frac{11}{6}-\frac{7}{6}\zeta+\frac{1}{3}\zeta^2 \right)^{\alpha}\\
&\quad =\left(\frac{11}{6}\right)^{\alpha}\!\!\!\left(1-\zeta\right)^{\alpha}\left(1-{\mu_3}\zeta\right)^{\alpha}\left(1-\overline{{\mu_3}}\zeta\right)^{\alpha}\\
&\quad =\left(\frac{11}{6}\right)^{\alpha}\sum_{n=0}^{\infty}(-1)^n \left( \begin{matrix} \alpha \\n \end{matrix} \right )\zeta^n
        \sum_{m=0}^{\infty} \left(-{\mu_3}\right)^m  \left( \begin{matrix} \alpha \\m \end{matrix} \right )\zeta^m
        \sum_{l=0}^{\infty} \left(-\overline{{\mu_3}}\right)^l  \left( \begin{matrix} \alpha \\l \end{matrix} \right )\zeta^l \\
&\quad =\left(\frac{11}{6}\right)^{\alpha}\sum_{n=0}^{\infty}(-1)^n \left( \begin{matrix} \alpha \\n \end{matrix} \right )\zeta^n
\left\{ \sum_{j=0}^{\infty}\sum_{m=0}^{j} (-1)^j{\mu_3}\!^m\, \overline{{\mu_3}}^{j-m}
 \left( \begin{matrix} \alpha \\m \end{matrix} \right )\left( \begin{matrix} \alpha \\j-m \end{matrix} \right ) \zeta^{j}\right \}  \\
&\quad = \sum_{k=0}^{\infty}{l}_k^{3,\alpha}\zeta^{k},
\end{split}
\end{equation}
with
\begin{equation}\label{2.6}
 {l}_k^{3,\alpha}
  =\left(\frac{11}{6}\right)^{\alpha} \sum_{j=0}^{k}\sum_{m=0}^{j}{\mu_3}\!^m \,\overline{{\mu_3}}^{j-m}\,{l}_m^{1,\alpha}\,{l}_{j-m}^{1,\alpha}\,{l}_{k-j}^{1,\alpha},
\end{equation}
where ${\mu_3}=\frac{4}{7+\sqrt{39}\,i}$, $\overline{{\mu_3}}=\frac{4}{7-\sqrt{39}\,i}$, $i=\sqrt{-1}$ and $k=j+n$, $j=l+m$, and ${l}_m^{1,\alpha}$ is defined by (\ref{2.2}).

Taking $\nu=4$, for all $|\zeta| \leq 1$, Eq. (\ref{1.1}) reduces to the following form
\begin{equation*}
\begin{split}
\left(\frac{25}{12}-4\zeta+3\zeta^2-\frac{4}{3}\zeta^3+\frac{1}{4}\zeta^4\right)^{\alpha}
&=\left(1-\zeta\right)^{\alpha}\left(\frac{25}{12}-\frac{23}{12}\zeta+\frac{13}{12}\zeta^2-\frac{1}{4}\zeta^3 \right)^{\alpha}\\
&=\left(\frac{25}{12}\right)^{\alpha}\left(1-\zeta\right)^{\alpha}\left(-\frac{3}{25}\zeta^3 +\frac{13}{25}\zeta^2 -\frac{23}{25}\zeta + 1 \right)^{\alpha}\\
&=\left(\frac{25}{12}\right)^{\alpha}\left(1-\zeta\right)^{\alpha}\left(1-\nu_4\zeta\right)^{\alpha}
\left(1-{\mu_4}\zeta\right)^{\alpha}\left(1-\overline{{\mu_4}}\zeta\right)^{\alpha};
\end{split}
\end{equation*}
then from Shengjin's Formulas [\cite{Fan:89}],  we obtain
\begin{equation*}
\begin{split}
\nu_4&=\frac{3a}{-b-\left(\sqrt[3]{Y_1}-\sqrt[3]{-Y_2}\right)};\\
\mu_4&=\frac{3a}{-b+\frac{1}{2}\left(\sqrt[3]{Y_1}-\sqrt[3]{-Y_2}\right)+\frac{\sqrt{3}}{2}\left(\sqrt[3]{Y_1}+\sqrt[3]{-Y_2}\right)i};\\
\overline{{\mu_4}}&=\frac{3a}{-b+\frac{1}{2}\left(\sqrt[3]{Y_1}-\sqrt[3]{-Y_2}\right)-\frac{\sqrt{3}}{2}\left(\sqrt[3]{Y_1}+\sqrt[3]{-Y_2}\right)i},\\
\end{split}
\end{equation*}
where   $a=-\frac{3}{25}$,  $b=\frac{13}{25}$,  $c=-\frac{23}{25}$, $d=1$; $A=b^2-3ac$, $B=bc-9ad$, $C=c^2-3bd$; $\Delta=B^2-4AC>0$,
$Y_1=Ab+\frac{3}{2}a(-B-\sqrt{\Delta})>0$, $Y_2=Ab+\frac{3}{2}a(-B+\sqrt{\Delta})<0$, and $i=\sqrt{-1}$. Thus
\begin{equation}\label{2.7}
\begin{split}
&\left(\frac{25}{12}-4\zeta+3\zeta^2-\frac{4}{3}\zeta^3+\frac{1}{4}\zeta^4\right)^{\alpha}\\
&\quad =\left(\frac{25}{12}\right)^{\alpha}\sum_{n=0}^{\infty}(-1)^n \left( \begin{matrix} \alpha \\n \end{matrix} \right )\zeta^n
        \sum_{m=0}^{\infty} \left(-{\mu_4}\right)^m  \left( \begin{matrix} \alpha \\m \end{matrix} \right )\zeta^m
        \sum_{l=0}^{\infty} \left(-\overline{{\mu_4}}\right)^l  \left( \begin{matrix} \alpha \\l \end{matrix} \right )\zeta^l
        \sum_{p=0}^{\infty} \left(-{\nu_4}\right)^p  \left( \begin{matrix} \alpha \\p \end{matrix} \right )\zeta^p \\
&\quad = \sum_{n=0}^{\infty}{l}_n^{4,\alpha}\zeta^{n},
\end{split}
\end{equation}
with
\begin{equation}\label{2.8}
 {l}_n^{4,\alpha}
  =\left(\frac{25}{12}\right)^{\alpha}\sum_{k=0}^{n} \sum_{j=0}^{k}\sum_{m=0}^{j}{\nu_4}\!^{n-k} {\mu_4}\!^m \,\overline{{\mu_4}}^{j-m}\,{l}_m^{1,\alpha}\,{l}_{j-m}^{1,\alpha}\,{l}_{k-j}^{1,\alpha}\,{l}_{n-k}^{1,\alpha},
\end{equation}
and  ${l}_m^{1,\alpha}$ is defined by (\ref{2.2}).

Taking $\nu=5$, for all $|\zeta| \leq 1$, Eq. (\ref{1.1}) has the following form
\begin{equation*}
\begin{split}
&\left(\frac{137}{60}-5\zeta+5\zeta^2-\frac{10}{3}\zeta^3+\frac{5}{4}\zeta^4-\frac{1}{5}\zeta^5\right)^{\alpha}\\
&\quad =\left(1-\zeta\right)^{\alpha}\left(\frac{137}{60}-\frac{163}{60}\zeta+\frac{137}{60}\zeta^2-\frac{21}{20}\zeta^3 +\frac{1}{5}\zeta^4\right)^{\alpha}\\
&\quad =\left(\frac{137}{60}\right)^{\alpha}\left(1-\zeta\right)^{\alpha}\left(1-\frac{163}{137}\zeta+\zeta^2-\frac{63}{137}\zeta^3+\frac{12}{137}\zeta^4\right)^{\alpha}\\
&\quad =\left(\frac{1}{5}\right)^{\alpha}\left(1-\zeta\right)^{\alpha}\left(\zeta^4 -\frac{21}{4}\zeta^3 +\frac{137}{12}\zeta^2 -\frac{163}{12}\zeta +\frac{137}{12} \right)^{\alpha}\\
&\quad=\left(\frac{137}{60}\right)^{\alpha}\left(1-\zeta\right)^{\alpha}\left(1-\nu_5\zeta\right)^{\alpha}\left(1-\overline{\nu_5}\zeta\right)^{\alpha}
\left(1-{\mu_5}\zeta\right)^{\alpha}\left(1-\overline{{\mu_5}}\zeta\right)^{\alpha}.
\end{split}
\end{equation*}
According to Ferrari's Formulas [\cite{Polyanin:07}] and Shengjin's Formulas [\cite{Fan:89}],  we obtain
\begin{equation*}
\begin{split}
\nu_5           &=\frac{4}{-(b+M) + \sqrt{(b+M)^2-16(y+\frac{N}{M})}};\\
\overline{\nu_5}&=\frac{4}{-(b-M) + \sqrt{(b-M)^2-16(y-\frac{N}{M})}};\\
\mu_5           &=\frac{4}{-(b+M) - \sqrt{(b+M)^2-16(y+\frac{N}{M})}};\\
\overline{{\mu_5}}&=\frac{4}{-(b-M) - \sqrt{(b-M)^2-16(y-\frac{N}{M})}},\\
\end{split}
\end{equation*}
where $a=1$,  $b=-\frac{21}{4}$,  $c=\frac{137}{12}$, $d=-\frac{163}{12}$, $e=\frac{137}{12}$;
$\widetilde{a}=1$,  $\widetilde{b}=-\frac{137}{24}$,  $\widetilde{c}=\frac{1231}{192}$, $\widetilde{d}=\frac{4259}{1536}$;
$A=\widetilde{b}^2-3\widetilde{a}\widetilde{c}$, $B=\widetilde{b}\widetilde{c}-9\widetilde{a}\widetilde{d}$,
$C=\widetilde{c}^2-3\widetilde{b}\widetilde{d}$; $\Delta=B^2-4AC<0$;
 $T=\frac{2A\widetilde{b}-3\widetilde{a}B}{2A^{\frac{3}{2}}} $, $\theta=\arccos(T)$  $y=\frac{-\widetilde{b}-2A^{\frac{1}{2}}\cos\frac{\theta}{3} }{3a}$;
and $M=\sqrt{8y+b^2-4c}$,  $N=\sqrt{by-d}$. Therefore, we have
\begin{equation}\label{2.9}
\begin{split}
&\left(\frac{137}{60}-5\zeta+5\zeta^2-\frac{10}{3}\zeta^3+\frac{5}{4}\zeta^4-\frac{1}{5}\zeta^5\right)^{\alpha}
 = \sum_{q=0}^{\infty}{l}_q^{5,\alpha}\zeta^{q},
\end{split}
\end{equation}
where
\begin{equation}\label{2.10}
 {l}_q^{5,\alpha}
  =\left(\frac{137}{60}\right)^{\alpha}\sum_{n=0}^{q}\sum_{k=0}^{n} \sum_{j=0}^{k}\sum_{m=0}^{j}\overline{\nu_5}\!^{q-n}{\nu_5}\!^{n-k} {\mu_5}\!^m \,\overline{{\mu_5}}^{j-m}{\,l}_m^{1,\alpha}{\,l}_{j-m}^{1,\alpha}{\,l}_{k-j}^{1,\alpha}{\,l}_{n-k}^{1,\alpha}{\,l}_{q-n}^{1,\alpha},
\end{equation}
and  ${l}_m^{1,\alpha}$ is defined by (\ref{2.2}).

In the following, by using the technique of Fourier transform, we again list and simply prove the convergent order of Lubich's operator. 
\begin{lemma}(Case $\nu=1$; \cite{Meerschaert:04})\label{lemma2.4}
   Let $u $, $_{-\infty}D_x^{\alpha+1}u(x)$  with $\alpha \in (1,2)$  and their Fourier transforms belong to $L_1(\mathbb{R})$, and  denote that
 \begin{equation*}
_{1L}A_{p}^{1,\alpha}u(x)=\frac{1}{h^{\alpha}}\sum_{m=0}^{\infty}{l}_m^{1,\alpha}u(x-(m-p)h),
\end{equation*}
 where ${l}_m^{1,\alpha}$ is defined by (\ref{2.2}) and $p$ an integer.  Then
$$
 _{-\infty}D_x^{\alpha}u(x)=\,_{1L}A_{p}^{1,\alpha}u(x)+\mathcal{O}(h).
$$
\end{lemma}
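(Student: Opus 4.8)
The plan is to pass to Fourier space, where both $_{1L}A_{p}^{1,\alpha}$ and $_{-\infty}D_x^{\alpha}$ act as Fourier multipliers, and then compare the two symbols. First I would apply $\mathcal{F}$ to $_{1L}A_{p}^{1,\alpha}u$ term by term; this is legitimate since the Gr\"{u}nwald weights decay like $l_{m}^{1,\alpha}=\mathcal{O}(m^{-\alpha-1})$, so $\sum_{m}|l_{m}^{1,\alpha}|<\infty$. Using the shift rule $\mathcal{F}\big(u(\cdot-(m-p)h)\big)(\omega)=e^{i(m-p)\omega h}\widehat{u}(\omega)$ together with the generating-function identity (\ref{2.1}) evaluated at $\zeta=e^{i\omega h}$ (for which $\mathrm{Re}\,(1-e^{i\omega h})\ge 0$, so the principal branch is the correct one), one obtains
\begin{equation*}
\mathcal{F}\big(\,_{1L}A_{p}^{1,\alpha}u\big)(\omega)=\frac{1}{h^{\alpha}}\,e^{-ip\omega h}\,(1-e^{i\omega h})^{\alpha}\,\widehat{u}(\omega).
\end{equation*}
By Lemma~\ref{lemma2.1}, $\mathcal{F}(\,_{-\infty}D_x^{\alpha}u)(\omega)=(-i\omega)^{\alpha}\widehat{u}(\omega)$, so the whole problem reduces to estimating the symbol error
\begin{equation*}
W_{h}(\omega):=\frac{1}{h^{\alpha}}e^{-ip\omega h}(1-e^{i\omega h})^{\alpha}-(-i\omega)^{\alpha}.
\end{equation*}

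Next, I would factor $(-i\omega)^{\alpha}=(-i\omega h)^{\alpha}/h^{\alpha}$ and set $z=i\omega h$, so that
\begin{equation*}
\frac{W_{h}(\omega)}{(-i\omega)^{\alpha}}=e^{-pz}\Big(\frac{e^{z}-1}{z}\Big)^{\alpha}-1.
\end{equation*}
On the disk $|z|\le 1$ one has $\big|\tfrac{e^{z}-1}{z}-1\big|\le e-2<1$, so $\tfrac{e^{z}-1}{z}$ stays in the open right half-plane and $z\mapsto e^{-pz}\big(\tfrac{e^{z}-1}{z}\big)^{\alpha}$ is holomorphic on a neighbourhood of $\{|z|\le 1\}$; hence its Taylor expansion has a remainder that is uniform there. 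Applying Lemma~\ref{lemma2.3} to $\big(1+\tfrac12 z+\tfrac16 z^{2}+\cdots\big)^{\alpha}$ (the case $b=\tfrac12$) and combining with $e^{-pz}=1-pz+\mathcal{O}(z^{2})$ gives
\begin{equation*}
\frac{W_{h}(\omega)}{(-i\omega)^{\alpha}}=\Big(\frac{\alpha}{2}-p\Big)i\omega h+\mathcal{O}\big((\omega h)^{2}\big)\qquad\text{for }|\omega h|\le 1,
\end{equation*}
whence $|W_{h}(\omega)|\le C h|\omega|^{\alpha+1}$ on $|\omega h|\le 1$. On the complementary range $|\omega h|>1$ the crude bounds $|1-e^{i\omega h}|\le 2$ and $|\omega|^{\alpha}\le h|\omega|^{\alpha+1}$ show $|W_{h}(\omega)|\le (2^{\alpha}+1)h|\omega|^{\alpha+1}$ there as well, so a single estimate $|W_{h}(\omega)|\le C h|\omega|^{\alpha+1}$ holds for all $\omega\in\mathbb{R}$.

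Finally, I would invert the Fourier transform: the hypothesis $\widehat{u}\in L_{1}(\mathbb{R})$ makes the representation $v(x)=\frac{1}{2\pi}\int_{\mathbb{R}}e^{-i\omega x}\widehat{v}(\omega)\,d\omega$ valid for $v=u$, and hence for $_{1L}A_{p}^{1,\alpha}u$ and $_{-\infty}D_x^{\alpha}u$, so that
\begin{equation*}
\big|\,_{1L}A_{p}^{1,\alpha}u(x)-{}_{-\infty}D_x^{\alpha}u(x)\,\big|\le\frac{1}{2\pi}\int_{\mathbb{R}}|W_{h}(\omega)|\,|\widehat{u}(\omega)|\,d\omega\le\frac{C h}{2\pi}\int_{\mathbb{R}}|\omega|^{\alpha+1}|\widehat{u}(\omega)|\,d\omega.
\end{equation*}
By Lemma~\ref{lemma2.1} applied with exponent $\alpha+1$ we have $|\omega|^{\alpha+1}|\widehat{u}(\omega)|=|\mathcal{F}(\,_{-\infty}D_x^{\alpha+1}u)(\omega)|$, which lies in $L_{1}(\mathbb{R})$ by assumption; the resulting bound is independent of $x$, which yields the claimed $\mathcal{O}(h)$. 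The part I expect to be the real obstacle is not the Taylor expansion near $\omega=0$ but making the symbol estimate \emph{global}: one must control $W_{h}$ in the high-frequency regime $|\omega h|>1$ and pin down the branch of the complex power $(1-e^{i\omega h})^{\alpha}$, so that the single integrable weight $|\omega|^{\alpha+1}|\widehat{u}(\omega)|$ (finite precisely because of the regularity assumed on $_{-\infty}D_x^{\alpha+1}u$) dominates the symbol error at every frequency.
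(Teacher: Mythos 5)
Your proof is correct and follows essentially the same Fourier-multiplier argument the paper uses for the analogous cases $\nu=3,4,5$ (Lemmas \ref{lemma2.6}--\ref{lemma2.8}): transform the shifted Gr\"{u}nwald sum, factor out $(-i\omega)^{\alpha}$, Taylor-expand the symbol in $z$ to identify the $\left(p-\tfrac{\alpha}{2}\right)z$ leading error, and invert using the $L_1$ hypotheses. The only difference is that you explicitly justify the global symbol bound for $|\omega h|>1$ and the choice of branch, which the paper leaves implicit; this is a welcome tightening rather than a different route.
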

\begin{lemma}(Case $\nu=2$; \cite{Chen:13})\label{lemma2.5}
  Let $u $, $_{-\infty}D_x^{\alpha+1}u(x)$ (or $_{-\infty}D_x^{\alpha+2}u(x)$) with $\alpha \in (1,2)$  and their Fourier transforms belong to $L_1(\mathbb{R})$ when $p\neq 0$ (or $p=0$); and  denote that
 \begin{equation*}
_{1L}A_{p}^{2,\alpha}u(x)=\frac{1}{h^{\alpha}}\sum_{j=0}^{\infty}{l}_j^{2,\alpha}u(x-(j-p)h),
\end{equation*}
where ${l}_j^{2,\alpha}$ is defined by (\ref{2.4}) and $p$ an integer. Then
\begin{equation*}
\begin{split}
 &_{-\infty}D_x^{\alpha}u(x)=\,_{1L}A_{p}^{2,\alpha}u(x)+\mathcal{O}(h),   \,~~~~p\neq 0;\\
  &_{-\infty}D_x^{\alpha}u(x)=\,_{1L}A_{p}^{2,\alpha}u(x)+\mathcal{O}(h^2),   ~~~p=0.
 \end{split}
\end{equation*}
\end{lemma}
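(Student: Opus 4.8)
The plan is to pass to the Fourier side and reduce the whole statement to a uniform estimate for the symbol of the operator. Since $\alpha\in(1,2)$ the Gr\"unwald weights satisfy $l_m^{1,\alpha}=\mathcal{O}(m^{-\alpha-1})$, hence by (\ref{2.4}) also $l_j^{2,\alpha}=\mathcal{O}(j^{-\alpha-1})$ and $\sum_{j\geq0}|l_j^{2,\alpha}|<\infty$; together with $\widehat{u}\in L_1(\mathbb{R})$ this makes $_{1L}A_p^{2,\alpha}u$ well defined and permits taking its Fourier transform term by term. Using $\mathcal{F}\big(u(\cdot-(j-p)h)\big)(\omega)=e^{i\omega(j-p)h}\widehat{u}(\omega)$ and the generating-function identity (\ref{2.3}) evaluated at $\zeta=e^{i\omega h}$ (valid up to $|\zeta|=1$, as stated there), one obtains
\begin{equation*}
\mathcal{F}\big({}_{1L}A_p^{2,\alpha}u\big)(\omega)=\frac{1}{h^\alpha}\,e^{-i\omega p h}\left(\frac{3}{2}-2e^{i\omega h}+\frac{1}{2}e^{2i\omega h}\right)^{\!\alpha}\widehat{u}(\omega)=:\frac{1}{h^\alpha}\,W_p(\omega h)\,\widehat{u}(\omega).
\end{equation*}
By Lemma~\ref{lemma2.1}, $\mathcal{F}\big({}_{-\infty}D_x^\alpha u\big)(\omega)=(-i\omega)^\alpha\widehat{u}(\omega)$, so after inverse transform the error equals $\frac{1}{2\pi}\int_{\mathbb{R}}e^{-i\omega x}\big[h^{-\alpha}W_p(\omega h)-(-i\omega)^\alpha\big]\widehat{u}(\omega)\,d\omega$, and it suffices to bound the bracketed symbol suitably and check the integrand lies in $L_1(\mathbb{R})$.

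Next I would analyse the symbol. Factoring $\frac{3}{2}-2e^{iz}+\frac{1}{2}e^{2iz}=\frac{1}{2}(e^{iz}-1)(e^{iz}-3)$ shows this quantity has nonnegative real part for every real $z$ and vanishes only at $z\in2\pi\mathbb{Z}$; hence its principal $\alpha$-th power is continuous on $\mathbb{R}$ and the function $z\mapsto e^{-ipz}\big(\frac{3}{2}-2e^{iz}+\frac{1}{2}e^{2iz}\big)^\alpha/(-iz)^\alpha$ is bounded on all of $\mathbb{R}$. Near $z=0$, a Taylor expansion (equivalently, Lemma~\ref{lemma2.3} applied to the normalised factor together with Lubich's identity $\sum_{i=1}^{2}\frac{1}{i}(1-e^{-w})^i=w+\mathcal{O}(w^3)$) gives $\frac{3}{2}-2e^{iz}+\frac{1}{2}e^{2iz}=-iz\big(1+\mathcal{O}(z^2)\big)$, so that
\begin{equation*}
\frac{W_p(z)}{(-iz)^\alpha}=e^{-ipz}\big(1+\mathcal{O}(z^2)\big)=1-ipz+\mathcal{O}(z^2)\qquad(z\to0).
\end{equation*}
Therefore $\psi_p(z):=W_p(z)/(-iz)^\alpha-1$ is bounded on $\mathbb{R}$, with $\psi_p(z)=\mathcal{O}(z)$ as $z\to0$ when $p\neq0$ and $\psi_0(z)=\mathcal{O}(z^2)$ as $z\to0$; combining the local and global bounds yields $|\psi_p(z)|\leq C\min(|z|,1)$ for $p\neq0$ and $|\psi_0(z)|\leq C\min(z^2,1)$ for all $z\in\mathbb{R}$.

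Finally I would convert the symbol bound into the pointwise estimate. Since $h^{-\alpha}W_p(\omega h)-(-i\omega)^\alpha=(-i\omega)^\alpha\psi_p(\omega h)$, treating the regimes $|\omega h|\leq1$ and $|\omega h|\geq1$ separately gives, uniformly in $\omega$, $\big|(-i\omega)^\alpha\psi_p(\omega h)\big|\leq Ch|\omega|^{\alpha+1}$ when $p\neq0$ and $\big|(-i\omega)^\alpha\psi_0(\omega h)\big|\leq Ch^2|\omega|^{\alpha+2}$. By Lemma~\ref{lemma2.1} (with $\alpha$ replaced by $\alpha+1$, resp. $\alpha+2$), $|\omega|^{\alpha+1}|\widehat{u}(\omega)|=\big|\mathcal{F}({}_{-\infty}D_x^{\alpha+1}u)(\omega)\big|$ and $|\omega|^{\alpha+2}|\widehat{u}(\omega)|=\big|\mathcal{F}({}_{-\infty}D_x^{\alpha+2}u)(\omega)\big|$, which belong to $L_1(\mathbb{R})$ by hypothesis for $p\neq0$ and $p=0$ respectively. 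Substituting these bounds into the error integral shows the integrand is in $L_1(\mathbb{R})$ and that $\big|{}_{1L}A_p^{2,\alpha}u(x)-{}_{-\infty}D_x^\alpha u(x)\big|\leq Ch$ for $p\neq0$ and $\leq Ch^2$ for $p=0$, which is the assertion.

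I expect the main obstacle to be the uniform-in-$\omega$ control of the symbol error: for $|\omega h|$ small everything is routine Taylor expansion, but for $|\omega h|$ of order one or larger one must exploit the global boundedness of the periodic factor $\big(\frac{3}{2}-2e^{i\omega h}+\frac{1}{2}e^{2i\omega h}\big)^\alpha$ and then pay for the lost powers of $|\omega|$ with the extra smoothness of $u$ encoded in the $L_1$ hypothesis on $_{-\infty}D_x^{\alpha+1}u$ (resp. $_{-\infty}D_x^{\alpha+2}u$); packaging the two regimes into the single clean bound $|\omega|^\alpha|\psi_p(\omega h)|\leq Ch^{k}|\omega|^{\alpha+k}$ is the delicate bookkeeping step.
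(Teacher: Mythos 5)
Your proposal is correct and follows essentially the same route as the paper: pass to the Fourier side, identify the symbol $e^{-i\omega ph}\bigl(\tfrac{3}{2}-2e^{i\omega h}+\tfrac{1}{2}e^{2i\omega h}\bigr)^{\alpha}$ via the generating function, Taylor-expand it against $(-i\omega)^{\alpha}$ to extract the leading error term $pz+\mathcal{O}(z^2)$, and convert the bound $|\widehat{\phi}(\omega)|\leq C h^{k}|\omega|^{\alpha+k}|\widehat{u}(\omega)|$ into the pointwise estimate using the $L_1$ hypothesis on $_{-\infty}D_x^{\alpha+k}u$. Your extra care about the factorization of the quadratic symbol (its real part is $(1-\cos x)^2\geq 0$) and the uniform control of $\psi_p$ in the regime $|\omega h|\gtrsim 1$ tightens a step the paper's analogous proofs (Lemmas \ref{lemma2.6}--\ref{lemma2.8}) leave implicit, but it does not change the argument.
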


\begin{lemma}(Case $\nu=3$)\label{lemma2.6}
  Let $u $, $_{-\infty}D_x^{\alpha+1}u(x)$ (or $_{-\infty}D_x^{\alpha+3}u(x)$) and their Fourier transforms belong to $L_1(\mathbb{R})$ when $p\neq 0$ (or $p=0$); and  denote that
 \begin{equation}\label{2.11}
_{1L}A_{p}^{3,\alpha}u(x)=\frac{1}{h^{\alpha}}\sum_{k=0}^{\infty}{l}_k^{3,\alpha}u(x-(k-p)h),
\end{equation}
where ${l}_k^{3,\alpha}$ is defined by (\ref{2.6}) and $p$ an integer. Then
\begin{equation*}
\begin{split}
 &_{-\infty}D_x^{\alpha}u(x)=\,_{1L}A_{p}^{3,\alpha}u(x)+\mathcal{O}(h),   \,~~~~p\neq 0;\\
  &_{-\infty}D_x^{\alpha}u(x)=\,_{1L}A_{p}^{3,\alpha}u(x)+\mathcal{O}(h^3),   ~~~p=0.
 \end{split}
\end{equation*}
\end{lemma}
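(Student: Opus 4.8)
The plan is to mimic the Fourier-transform argument already used for the cases $\nu=1,2$ (Lemmas \ref{lemma2.4} and \ref{lemma2.5}). First I would apply the Fourier transform to $_{1L}A_p^{3,\alpha}u(x)$. Using the shift property $\mathcal{F}(u(x-(k-p)h))=e^{i\omega(k-p)h}\widehat u(\omega)$ together with the generating-function identity \eqref{2.5}, the sum $\sum_k l_k^{3,\alpha} e^{i\omega k h}$ is exactly the generating function $\delta^\alpha(\zeta)$ evaluated at $\zeta=e^{i\omega h}$ with $\delta^\alpha(\zeta)=\bigl(\tfrac{11}{6}-3\zeta+\tfrac32\zeta^2-\tfrac13\zeta^3\bigr)^\alpha$. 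Hence
\begin{equation*}
\mathcal{F}\bigl(_{1L}A_p^{3,\alpha}u(x)\bigr)=\frac{1}{h^\alpha}e^{-i\omega p h}\,\delta^\alpha(e^{i\omega h})\,\widehat u(\omega)=(-i\omega)^\alpha W_p(i\omega h)\,\widehat u(\omega),
\end{equation*}
where $W_p(z):=e^{-pz}\bigl(\delta^\alpha(e^{z})/(-z)^\alpha\bigr)$ (with $z=i\omega h$), and I would compare this against $\mathcal{F}(_{-\infty}D_x^\alpha u(x))=(-i\omega)^\alpha\widehat u(\omega)$ from Lemma \ref{lemma2.1}.

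The crux is then a Taylor expansion of $W_p(z)$ near $z=0$. Writing $e^{z}=1+z+z^2/2+z^3/6+\cdots$, substitute into $\tfrac{11}{6}-3\zeta+\tfrac32\zeta^2-\tfrac13\zeta^3$ and factor out $(-z)$: one checks $\tfrac{11}{6}-3e^{z}+\tfrac32e^{2z}-\tfrac13e^{3z}=(-z)\bigl(1+b_1 z+b_2 z^2+\cdots\bigr)$ — this is precisely the statement that Lubich's third-order scheme is consistent of order three, so the coefficients arrange so that $\delta^\alpha(e^z)=(-z)^\alpha\bigl(1+0\cdot z+0\cdot z^2+O(z^3)\bigr)^\alpha$; more carefully, the construction gives $\delta^\alpha(e^{z})/(-z)^\alpha = 1 + c_3 z^3 + O(z^4)$ for some constant $c_3$. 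Multiplying by $e^{-pz}=1-pz+\tfrac{p^2}{2}z^2-\cdots$ yields $W_p(z)=1-pz+\tfrac{p^2}{2}z^2+O(z^3)$ for general $p$, but $W_0(z)=1+O(z^3)$. I would carry out the bookkeeping using Lemma \ref{lemma2.3} (with the appropriate $b,c,d$) to track the first few Taylor coefficients of $\bigl(\text{series}\bigr)^\alpha$ and confirm the $z$, $z^2$ terms vanish when $p=0$ and that the $z^3$ coefficient is finite; the $p\neq 0$ case only needs the leading term, giving $W_p(z)=1+O(z)$.

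With the expansion in hand, the error is $\mathcal{F}\bigl(_{1L}A_p^{3,\alpha}u-{}_{-\infty}D_x^\alpha u\bigr)=(-i\omega)^\alpha\bigl(W_p(i\omega h)-1\bigr)\widehat u(\omega)$. For $p=0$ this is bounded in absolute value by $C h^3|\omega|^{\alpha+3}|\widehat u(\omega)|=Ch^3\bigl|(i\omega)^{\alpha+3}\widehat u(\omega)\bigr|$, which is $Ch^3\bigl|\mathcal{F}(_{-\infty}D_x^{\alpha+3}u)\bigr|\in L_1(\mathbb{R})$ by hypothesis; inverting the Fourier transform and taking the $L_\infty$ bound over $x$ gives the $\mathcal{O}(h^3)$ estimate. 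For $p\neq 0$ the same argument with one power of $z$ and the hypothesis on $_{-\infty}D_x^{\alpha+1}u$ gives $\mathcal{O}(h)$. I would include the standard caveat that the remainder in the Taylor expansion of $W_p$ is uniform on the relevant strip $|z|\le |\omega| h$ so that the constant $C$ is independent of $\omega$ and $h$. The main obstacle is the algebraic verification that $\delta^\alpha(e^z)=(-z)^\alpha(1+O(z^3))$, i.e. that the weights $l_k^{3,\alpha}$ really annihilate the $z$ and $z^2$ corrections after raising to the power $\alpha$ — this is where Lemma \ref{lemma2.3} does the heavy lifting and where a sign or factorial slip is easiest; everything downstream is the routine Fourier/$L_1$ machinery already rehearsed in Lemmas \ref{lemma2.4}–\ref{lemma2.5}.
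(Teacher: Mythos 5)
Your proposal is correct and follows essentially the same route as the paper: Fourier transform the operator, factor the symbol as $e^{-pz}\,\delta^{\alpha}(e^{z})/(-z)^{\alpha}$, Taylor-expand about $z=0$ (the paper does this explicitly via Lemma \ref{lemma2.3}, obtaining the nonvanishing $z^3$ coefficient $\tfrac{2p^3-3\alpha}{12}$ that your $c_3$ stands in for), and convert the $O(z)$ resp.\ $O(z^3)$ symbol error into $O(h)$ resp.\ $O(h^3)$ via the $L_1$ hypothesis on $\mathcal{F}({}_{-\infty}D_x^{\alpha+1}u)$ resp.\ $\mathcal{F}({}_{-\infty}D_x^{\alpha+3}u)$. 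Your remark about uniformity of the symbol bound over all $\omega$ is a point the paper itself glosses over, but it is the standard argument and does not change the conclusion.
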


\begin{proof}
According to (\ref{2.5}), there exist 
\begin{equation*}
\begin{split}
\mathcal{F}(_{1L}A_{p}^{3,\alpha}u)(\omega)&=h^{-\alpha}\sum_{k=0}^{\infty}{l}_k^{3,\alpha} \mathcal{F}\left(u(x-(k-p)h)\right)(\omega) \\
&=h^{-\alpha}e^{-i\omega ph} \sum_{k=0}^{\infty}{l}_k^{3,\alpha} \left(e^{i\omega h}\right)^k \widehat{u}(\omega)\\
&=h^{-\alpha}e^{-i\omega ph}\left(1-e^{i\omega h}\right)^{\alpha}
  \left(\frac{11}{6}-\frac{7}{6}e^{i\omega h}+\frac{1}{3}e^{2i\omega h} \right)^{\alpha}\widehat{u}(\omega)\\
&=(-i\omega)^{\alpha}e^{-i\omega ph} \left(\frac{1-e^{i\omega h}}{-i\omega h}\right )^{\alpha}
\left(\frac{11}{6}-\frac{7}{6}e^{i\omega h}+\frac{1}{3}e^{2i\omega h} \right)^{\alpha}\widehat{u}(\omega)\\
&=(-i\omega)^{\alpha}e^{ pz} \left(\frac{1-e^{-z}}{z}\right )^{\alpha}
\left(\frac{11}{6}-\frac{7}{6}e^{-z}+\frac{1}{3}e^{-2z} \right)^{\alpha}\widehat{u}(\omega),
  \end{split}
\end{equation*}
with $z=-i\omega h$. It is easy to check that
 \begin{equation}\label{2.12}
\begin{split}
&e^{ pz} \left(\frac{1-e^{-z}}{z}\right )^{\alpha}\\
&\quad =1+\Big(p-\frac{\alpha}{2}\Big)z+ \Big(\frac{1}{2}p^2 -\frac{\alpha }{2}p+\frac{3\alpha^2+\alpha}{24}\Big) z^2 \\
&\qquad +\Big(\frac{1}{6}p^3 -\frac{\alpha }{4}p^2
+\frac{3\alpha^2+\alpha}{24}p-\frac{\alpha^3+\alpha^2}{48} \Big)z^3   \\
&\qquad +\Big(\frac{1}{24}p^4-\frac{\alpha}{12}p^3 +\frac{3\alpha^2+\alpha }{48}p^2
-\frac{\alpha^3+\alpha^2}{48}p+\frac{15\alpha^4+30\alpha^3+5\alpha^2-2\alpha}{5760} \Big)z^4  +\mathcal{O}(z^5),
  \end{split}
\end{equation}
and from the Lemma \ref{lemma2.3}, we have
 \begin{equation*}
\begin{split}
\left(\frac{11}{6}-\frac{7}{6}e^{-z}+\frac{1}{3}e^{-2z} \right)^{\alpha}
& =\left(1+\frac{1}{2}z+\frac{1}{12}z^2-\frac{1}{4}z^3+\mathcal{O}(z^4)\right)^{\alpha}\\
&=1+ \frac{\alpha}{2}z+\frac{\alpha(3\alpha-1)}{24}z^2+\frac{\alpha(\alpha+3)(\alpha-4)}{48}z^3+\mathcal{O}(z^4),
  \end{split}
\end{equation*}
then from the above equation and  (\ref{2.12})   we obtain
\begin{equation}\label{2.13}
\begin{split}
&e^{ pz} \left(\frac{1-e^{-z}}{z}\right )^{\alpha}\left(\frac{11}{6}-\frac{7}{6}e^{-z}+\frac{1}{3}e^{-2z} \right)^{\alpha}
\!\!  =1+pz+\frac{p^2}{2}z^2+\frac{2p^3-3\alpha}{12}z^3+\mathcal{O}(z^4).
  \end{split}
\end{equation}
Therefore, from Lemma \ref{lemma2.1}, we obtain
\begin{equation*}
\begin{split}
\mathcal{F}(_{1L}A_{p}^{3,\alpha}u)(\omega)=\mathcal{F}(_{-\infty}D_x^{\alpha}u(x))+ \widehat{\phi}(\omega),
  \end{split}
\end{equation*}
where $ \widehat{\phi}(\omega)=(-i\omega)^{\alpha}\left(pz+\frac{p^2}{2}z^2+\frac{2p^3-3\alpha}{12}z^3+\mathcal{O}(z^4)\right)\widehat{u}(\omega)$,
$z=-i\omega h$. Then there exists
\begin{equation*}
\begin{split}
&|\widehat{\phi}(\omega)| \leq \widetilde{c}|i\omega|^{\alpha+1}|\widehat{u}(\omega)|\cdot h, ~~~~~~p\neq 0;\\
&|\widehat{\phi}(\omega)| \leq c|i\omega|^{\alpha+3}|\widehat{u}(\omega)|\cdot h^3, ~\,~~~p=0.
\end{split}
\end{equation*}
Hence
\begin{equation*}
\begin{split}
|_{-\infty}D_x^{\alpha}u(x)-\,_{1L}A_{p}^{3,\alpha}u(x)|=|\phi(x)| \leq \frac{1}{2\pi}\int_{\mathbb{R}}|\widehat{\phi}(\omega)|dx= \left\{ \begin{array}
 {l@{\quad} l}
\mathcal{O}(h),&p\neq 0;\\
\mathcal{O}(h^3),& p=0.
 \end{array}
 \right.
  \end{split}
\end{equation*}
\end{proof}

\begin{lemma}(Case $\nu=4$)\label{lemma2.7}
 Let $u $, $_{-\infty}D_x^{\alpha+1}u(x)$ (or $_{-\infty}D_x^{\alpha+4}u(x)$) with $\alpha \in (1,2)$  and their Fourier transforms belong to $L_1(\mathbb{R})$ when $p\neq 0$ (or $p=0$); and  denote that
 \begin{equation}\label{2.14}
_{1L}A_{p}^{4,\alpha}u(x)=\frac{1}{h^{\alpha}}\sum_{n=0}^{\infty}{l}_n^{4,\alpha}u(x-(n-p)h),
\end{equation}
where ${l}_n^{4,\alpha}$ is defined by (\ref{2.8}) and $p$ an integer. Then
\begin{equation*}
\begin{split}
 &_{-\infty}D_x^{\alpha}u(x)=\,_{1L}A_{p}^{4,\alpha}u(x)+\mathcal{O}(h),   \,~~~~p\neq 0;\\
  &_{-\infty}D_x^{\alpha}u(x)=\,_{1L}A_{p}^{4,\alpha}u(x)+\mathcal{O}(h^4),   ~~~p=0.
 \end{split}
\end{equation*}
\end{lemma}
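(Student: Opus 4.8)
The plan is to reproduce, one order higher, the argument used for Lemma~\ref{lemma2.6}. First I would Fourier transform the operator ${}_{1L}A_{p}^{4,\alpha}u$ from (\ref{2.14}): using the shift property $\mathcal{F}(u(x-(n-p)h))(\omega)=e^{-i\omega(n-p)h}\widehat{u}(\omega)$ and resumming the coefficients ${l}_n^{4,\alpha}$ through the factorization (\ref{2.7}), one obtains, exactly as in the chain of identities preceding (\ref{2.12}),
\begin{equation*}
\mathcal{F}({}_{1L}A_{p}^{4,\alpha}u)(\omega)=(-i\omega)^{\alpha}\,e^{pz}\left(\frac{1-e^{-z}}{z}\right)^{\alpha}\left(\frac{25}{12}-\frac{23}{12}e^{-z}+\frac{13}{12}e^{-2z}-\frac{1}{4}e^{-3z}\right)^{\alpha}\widehat{u}(\omega),\qquad z=-i\omega h.
\end{equation*}

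The heart of the proof is the expansion of the symbol $G(z):=e^{pz}\left(\frac{1-e^{-z}}{z}\right)^{\alpha}\left(\frac{25}{12}-\frac{23}{12}e^{-z}+\frac{13}{12}e^{-2z}-\frac{1}{4}e^{-3z}\right)^{\alpha}$ up to order $z^{4}$. This can be done mechanically: the factor $e^{pz}\left(\frac{1-e^{-z}}{z}\right)^{\alpha}$ is already expanded in (\ref{2.12}), while Taylor-expanding the cubic-in-$e^{-z}$ factor gives $\frac{25}{12}-\frac{23}{12}e^{-z}+\frac{13}{12}e^{-2z}-\frac{1}{4}e^{-3z}=1+\frac{1}{2}z+\frac{1}{12}z^{2}+0\cdot z^{3}-\frac{29}{144}z^{4}+\mathcal{O}(z^{5})$, which one raises to the power $\alpha$ via Lemma~\ref{lemma2.3} and multiplies against (\ref{2.12}). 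A cleaner way to organize (and check) the computation is to observe that $\left(\frac{1-e^{-z}}{z}\right)\left(\frac{25}{12}-\frac{23}{12}e^{-z}+\frac{13}{12}e^{-2z}-\frac{1}{4}e^{-3z}\right)=\frac{1}{z}\sum_{i=1}^{4}\frac{1}{i}(1-e^{-z})^{i}$ together with the classical identity $\sum_{i=1}^{\infty}\frac{1}{i}(1-e^{-z})^{i}=-\log(e^{-z})=z$; since $(1-e^{-z})^{i}=\mathcal{O}(z^{i})$, this forces $\sum_{i=1}^{4}\frac{1}{i}(1-e^{-z})^{i}=z-\frac{1}{5}z^{5}+\mathcal{O}(z^{6})$, hence $\left(\frac{1-e^{-z}}{z}\right)\left(\frac{25}{12}-\cdots\right)=1-\frac{1}{5}z^{4}+\mathcal{O}(z^{5})$, and therefore
\begin{equation*}
G(z)=e^{pz}\left(1-\frac{1}{5}z^{4}+\mathcal{O}(z^{5})\right)^{\alpha}=1+pz+\frac{p^{2}}{2}z^{2}+\frac{p^{3}}{6}z^{3}+\left(\frac{p^{4}}{24}-\frac{\alpha}{5}\right)z^{4}+\mathcal{O}(z^{5}).
\end{equation*}
(The same computation with $\nu=3$ reproduces (\ref{2.13}), a useful sanity check.) The decisive feature — and the one point that must genuinely be verified rather than copied from Lemma~\ref{lemma2.6} — is that the $z^{1},z^{2},z^{3}$ coefficients of $G$ all vanish at $p=0$; this is exactly the fourth order consistency of Lubich's $\nu=4$ generating function, and the surviving $z^{4}$ coefficient at $p=0$ equals $-\alpha/5\neq0$.

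From here the argument closes just as in Lemma~\ref{lemma2.6}. By Lemma~\ref{lemma2.1}, $\mathcal{F}({}_{1L}A_{p}^{4,\alpha}u)(\omega)=\mathcal{F}({}_{-\infty}D_x^{\alpha}u(x))+\widehat{\phi}(\omega)$ with $\widehat{\phi}(\omega)=(-i\omega)^{\alpha}\left(pz+\frac{p^{2}}{2}z^{2}+\frac{p^{3}}{6}z^{3}+\left(\frac{p^{4}}{24}-\frac{\alpha}{5}\right)z^{4}+\mathcal{O}(z^{5})\right)\widehat{u}(\omega)$ and $z=-i\omega h$; bounding the leading surviving term yields $|\widehat{\phi}(\omega)|\le \widetilde{c}\,|i\omega|^{\alpha+1}|\widehat{u}(\omega)|\,h$ when $p\neq0$ and $|\widehat{\phi}(\omega)|\le c\,|i\omega|^{\alpha+4}|\widehat{u}(\omega)|\,h^{4}$ when $p=0$. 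Under the stated $L_{1}$ hypotheses on ${}_{-\infty}D_x^{\alpha+1}u$ (resp. ${}_{-\infty}D_x^{\alpha+4}u$) and their Fourier transforms these majorants are integrable in $\omega$, so inverting the transform and using $|\phi(x)|\le\frac{1}{2\pi}\int_{\mathbb{R}}|\widehat{\phi}(\omega)|\,d\omega$ gives ${}_{-\infty}D_x^{\alpha}u(x)-{}_{1L}A_{p}^{4,\alpha}u(x)=\mathcal{O}(h)$ for $p\neq0$ and $\mathcal{O}(h^{4})$ for $p=0$, as claimed. I expect no real obstacle beyond keeping the order-$z^{4}$ bookkeeping straight.
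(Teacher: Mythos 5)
Your proposal is correct and follows essentially the same route as the paper: Fourier transform the operator via the factorization (\ref{2.7}), expand the symbol $e^{pz}\left(\frac{1-e^{-z}}{z}\right)^{\alpha}\left(\frac{25}{12}-\frac{23}{12}e^{-z}+\frac{13}{12}e^{-2z}-\frac{1}{4}e^{-3z}\right)^{\alpha}$ to obtain (\ref{2.15}), and conclude by the $L_1$ bound on $\widehat{\phi}$; the paper carries out the expansion by brute force through Lemma \ref{lemma2.3}, whereas your observation that $\sum_{i=1}^{4}\frac{1}{i}(1-e^{-z})^{i}=-\log(e^{-z})+\mathcal{O}(z^{5})=z-\frac{1}{5}z^{5}+\mathcal{O}(z^{6})$ yields the same expansion (one order further, with the explicit $-\alpha/5$ leading error coefficient) more conceptually. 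Both computations agree, so there is no gap.
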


\begin{proof}
According to (\ref{2.7})   we obtain
\begin{equation*}
\begin{split}
\mathcal{F}(_{1L}A_{p}^{4,\alpha}u)(\omega)&=h^{-\alpha}\sum_{n=0}^{\infty}{l}_n^{4,\alpha} \mathcal{F}\left(u(x-(n-p)h)\right)(\omega) \\
&=h^{-\alpha}e^{-i\omega ph} \sum_{n=0}^{\infty}{l}_n^{4,\alpha} \left(e^{i\omega h}\right)^n \widehat{u}(\omega)\\
&=h^{-\alpha}e^{-i\omega ph}\left(1-e^{i\omega h}\right)^{\alpha}
\left(\frac{25}{12}-\frac{23}{12}e^{i\omega h}+\frac{13}{12}e^{2i\omega h}-\frac{1}{4}e^{3i\omega h} \right)^{\alpha}\widehat{u}(\omega)\\
&=(-i\omega)^{\alpha}e^{-i\omega ph} \left(\frac{1-e^{i\omega h}}{-i\omega h}\right )^{\alpha}
\left(\frac{25}{12}-\frac{23}{12}e^{i\omega h}+\frac{13}{12}e^{2i\omega h}-\frac{1}{4}e^{3i\omega h} \right)^{\alpha}\widehat{u}(\omega)\\
&=(-i\omega)^{\alpha}e^{ pz} \left(\frac{1-e^{-z}}{z}\right )^{\alpha}
\left(\frac{25}{12}-\frac{23}{12}e^{-z}+\frac{13}{12}e^{-2z}-\frac{1}{4}e^{-3z} \right)^{\alpha}\widehat{u}(\omega),
  \end{split}
\end{equation*}
where $z=-i\omega h$. From Lemma \ref{lemma2.3}, there exist 
 \begin{equation*}
\begin{split}
\left(\frac{25}{12}-\frac{23}{12}e^{-z}+\frac{13}{12}e^{-2z}-\frac{1}{4}e^{-3z} \right)^{\alpha}
& =\left(1+\frac{1}{2}z+\frac{1}{12}z^2-\frac{29}{144}z^4+\mathcal{O}(z^5)\right)^{\alpha}\\
&=1+ \frac{\alpha}{2}z+\frac{\alpha(3\alpha-1)}{24}z^2+\frac{\alpha^2(\alpha-1)}{48}z^3+\mathcal{O}(z^4),
  \end{split}
\end{equation*}
then we obtain
\begin{equation}\label{2.15}
\begin{split}
&e^{ pz} \left(\frac{1-e^{-z}}{z}\right )^{\alpha}\left(\frac{25}{12}-\frac{23}{12}e^{-z}+\frac{13}{12}e^{-2z}-\frac{1}{4}e^{-3z} \right)^{\alpha}
\!\!  =1+pz+\frac{p^2}{2!}z^2+\frac{p^3}{3!}z^3+\mathcal{O}(z^4).
  \end{split}
\end{equation}
Therefore, from Lemma \ref{lemma2.1}, we obtain
\begin{equation*}
\begin{split}
\mathcal{F}(_{1L}A_{p}^{4,\alpha}u)(\omega)=\mathcal{F}(_{-\infty}D_x^{\alpha}u(x))+ \widehat{\phi}(\omega),
  \end{split}
\end{equation*}
where $ \widehat{\phi}(\omega)=(-i\omega)^{\alpha}\left(pz+\frac{p^2}{2!}z^2+\frac{p^3}{3!}z^3+\mathcal{O}(z^4)\right)\widehat{u}(\omega)$.
Then there exists
\begin{equation*}
\begin{split}
&|\widehat{\phi}(\omega)| \leq \widetilde{c}|i\omega|^{\alpha+1}|\widehat{u}(\omega)|\cdot h, ~~~~~~p\neq 0;\\
&|\widehat{\phi}(\omega)| \leq c|i\omega|^{\alpha+4}|\widehat{u}(\omega)|\cdot h^4, ~\,~~~p=0.
\end{split}
\end{equation*}
Hence
\begin{equation*}
\begin{split}
|_{-\infty}D_x^{\alpha}u(x)-\,_{1L}A_{p}^{4,\alpha}u(x)|=|\phi(x)| \leq \frac{1}{2\pi}\int_{\mathbb{R}}|\widehat{\phi}(\omega)|dx= \left\{ \begin{array}
 {l@{\quad} l}
\mathcal{O}(h),&p\neq 0;\\
\mathcal{O}(h^4),& p=0.
 \end{array}
 \right.
  \end{split}
\end{equation*}
\end{proof}

\begin{lemma}(Case $\nu=5$)\label{lemma2.8}
Let $u$, $_{-\infty}D_x^{\alpha+1}u(x)$ (or $_{-\infty}D_x^{\alpha+5}u(x)$) with $\alpha \in (1,2)$  and their Fourier transforms belong to $L_1(\mathbb{R})$ when $p\neq 0$ (or $p=0$); and  denote that
 \begin{equation}\label{2.16}
_{1L}A_{p}^{5,\alpha}u(x)=\frac{1}{h^{\alpha}}\sum_{q=0}^{\infty}{l}_q^{5,\alpha}u(x-(q-p)h),
\end{equation}
where ${l}_q^{5,\alpha}$ is defined by (\ref{2.10}) and $p$ an integer. Then
\begin{equation*}
\begin{split}
 &_{-\infty}D_x^{\alpha}u(x)=\,_{1L}A_{p}^{5,\alpha}u(x)+\mathcal{O}(h),   \,~~~~p\neq 0;\\
  &_{-\infty}D_x^{\alpha}u(x)=\,_{1L}A_{p}^{5,\alpha}u(x)+\mathcal{O}(h^5),   ~~~p=0.
 \end{split}
\end{equation*}
\end{lemma}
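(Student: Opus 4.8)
The plan is to follow the Fourier-transform template of the proofs of Lemmas~\ref{lemma2.6} and~\ref{lemma2.7}, now using the degree-four factor displayed in the factorization just before~(\ref{2.9}). First I would transform~(\ref{2.16}): the shift $u(x-(q-p)h)$ contributes $e^{i\omega(q-p)h}$, summing the resulting geometric series against ${l}_q^{5,\alpha}$ reassembles the generating function evaluated at $\zeta=e^{i\omega h}$, and the rescaling $h^{-\alpha}(1-e^{i\omega h})^{\alpha}=(-i\omega)^{\alpha}\left(\frac{1-e^{-z}}{z}\right)^{\alpha}$ with $z=-i\omega h$ yields
\begin{equation*}
\mathcal{F}(_{1L}A_{p}^{5,\alpha}u)(\omega)=(-i\omega)^{\alpha}e^{pz}\left(\frac{1-e^{-z}}{z}\right)^{\alpha}\left(\frac{137}{60}-\frac{163}{60}e^{-z}+\frac{137}{60}e^{-2z}-\frac{21}{20}e^{-3z}+\frac{1}{5}e^{-4z}\right)^{\alpha}\widehat{u}(\omega).
\end{equation*}

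Next I would expand the last factor about $z=0$. A direct check shows its base equals $1+\frac{1}{2}z+\frac{1}{12}z^{2}-\frac{1}{720}z^{4}+\mathcal{O}(z^{5})$ (it agrees with the power series of $z/(1-e^{-z})$ through order $z^{4}$), so Lemma~\ref{lemma2.3} gives its $\alpha$-th power through order $z^{4}$; multiplying by the expansion~(\ref{2.12}) of $e^{pz}\left(\frac{1-e^{-z}}{z}\right)^{\alpha}$, I expect --- exactly as in~(\ref{2.13}) and~(\ref{2.15}) --- that all coefficients carrying a positive power of $\alpha$ cancel, leaving
\begin{equation*}
e^{pz}\left(\frac{1-e^{-z}}{z}\right)^{\alpha}\left(\frac{137}{60}-\frac{163}{60}e^{-z}+\frac{137}{60}e^{-2z}-\frac{21}{20}e^{-3z}+\frac{1}{5}e^{-4z}\right)^{\alpha}=1+pz+\frac{p^{2}}{2!}z^{2}+\frac{p^{3}}{3!}z^{3}+\frac{p^{4}}{4!}z^{4}+\mathcal{O}(z^{5}).
\end{equation*}
This collapse is forced, not accidental: the degree-four polynomial equals $\sum_{i=1}^{5}\frac{1}{i}(1-\zeta)^{i-1}$ at $\zeta=e^{-z}$, i.e.\ the cofactor of $(1-\zeta)$ in Lubich's generating function for $\nu=5$; hence, by the fifth-order consistency of Lubich's scheme, $\left(\frac{1-e^{-z}}{z}\right)^{\alpha}$ times the $\alpha$-th power of that polynomial equals $1+\mathcal{O}(z^{5})$, and the entire factor $e^{pz}$ only redistributes the leading part into $\sum_{k=0}^{4}p^{k}z^{k}/k!$.

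Then, using Lemma~\ref{lemma2.1} to identify $(-i\omega)^{\alpha}\widehat{u}(\omega)=\mathcal{F}(_{-\infty}D_x^{\alpha}u)(\omega)$, I would write $\mathcal{F}(_{1L}A_{p}^{5,\alpha}u)(\omega)=\mathcal{F}(_{-\infty}D_x^{\alpha}u)(\omega)+\widehat{\phi}(\omega)$ with $\widehat{\phi}(\omega)=(-i\omega)^{\alpha}\left(pz+\frac{p^{2}}{2!}z^{2}+\frac{p^{3}}{3!}z^{3}+\frac{p^{4}}{4!}z^{4}+\mathcal{O}(z^{5})\right)\widehat{u}(\omega)$. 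For $p\neq 0$ the linear term controls the bracket, so $|\widehat{\phi}(\omega)|\leq\widetilde{c}\,|i\omega|^{\alpha+1}|\widehat{u}(\omega)|\cdot h$; for $p=0$ the entire polynomial part vanishes and $|\widehat{\phi}(\omega)|\leq c\,|i\omega|^{\alpha+5}|\widehat{u}(\omega)|\cdot h^{5}$. Since $|i\omega|^{\alpha+1}\widehat{u}$ and $|i\omega|^{\alpha+5}\widehat{u}$ are the Fourier transforms of $_{-\infty}D_x^{\alpha+1}u$ and $_{-\infty}D_x^{\alpha+5}u$, which lie in $L_1(\mathbb{R})$ by hypothesis, the inversion bound $|\phi(x)|\leq\frac{1}{2\pi}\int_{\mathbb{R}}|\widehat{\phi}(\omega)|\,d\omega$ then gives the claimed $\mathcal{O}(h)$ for $p\neq 0$ and $\mathcal{O}(h^{5})$ for $p=0$.

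The main obstacle is the algebraic bookkeeping in the second step: one must push both Taylor expansions out to the $z^{4}$ term and verify that the $z^{1},z^{2},z^{3},z^{4}$ coefficients of the product come out exactly $p,\,p^{2}/2,\,p^{3}/6,\,p^{4}/24$, i.e.\ that every positive power of $\alpha$ drops out. This is routine but delicate, and the factorization just described is the conceptual reason it must succeed. A minor technical point, handled exactly as in the $\nu=3,4$ proofs, is that the remainder term --- which near $z=0$ is $\mathcal{O}(|z|)$ when $p\neq 0$ and $\mathcal{O}(|z|^{5})$ when $p=0$ --- retains that order uniformly in $\omega$; this holds because the degree-four factor has no zeros on $|\zeta|=1$ (Lubich's generating function for $\nu\leq 6$ having only a simple zero at $\zeta=1$ on the closed unit disk), so the discrete symbol stays bounded.
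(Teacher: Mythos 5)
Your proposal is correct and follows essentially the same route as the paper: Fourier transform of the shifted operator, Taylor expansion of the symbol $e^{pz}\left(\frac{1-e^{-z}}{z}\right)^{\alpha}\left(\frac{137}{60}-\frac{163}{60}e^{-z}+\frac{137}{60}e^{-2z}-\frac{21}{20}e^{-3z}+\frac{1}{5}e^{-4z}\right)^{\alpha}=1+pz+\frac{p^{2}}{2!}z^{2}+\frac{p^{3}}{3!}z^{3}+\frac{p^{4}}{4!}z^{4}+\mathcal{O}(z^{5})$, and the $L_1$ Fourier-inversion bound. Your structural explanation of the cancellation (the degree-four factor being the cofactor of $1-\zeta$ in Lubich's fifth-order generating function, so the truncated series $\frac{1}{z}\sum_{i=1}^{5}\frac{(1-e^{-z})^{i}}{i}=1+\mathcal{O}(z^{5})$) and your remark on uniformity of the symbol bound are welcome additions beyond what the paper records, but they do not change the argument.
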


\begin{proof}
According to (\ref{2.9})   we obtain
\begin{equation*}
\begin{split}
\mathcal{F}(_{1L}A_{p}^{5,\alpha}u)(\omega)&=h^{-\alpha}\sum_{l=0}^{\infty}{l}_q^{5,\alpha} \mathcal{F}\left(u(x-(q-p)h)\right)(\omega) \\
&=h^{-\alpha}e^{-i\omega ph} \sum_{l=0}^{\infty}{l}_q^{5,\alpha} \left(e^{i\omega h}\right)^q \widehat{u}(\omega)\\
&=h^{-\alpha}e^{-i\omega ph}\left(1-e^{i\omega h}\right)^{\alpha}
\left(\frac{137}{60}-\frac{163}{60}e^{i\omega h}+\frac{137}{60}e^{2i\omega h}
  -\frac{21}{20}e^{3i\omega h} +\frac{1}{5}e^{4i\omega h}\right)^{\alpha}\widehat{u}(\omega)\\
&=(-i\omega)^{\alpha}e^{ pz} \left(\frac{1-e^{-z}}{z}\right )^{\alpha}
\left(\frac{137}{60}-\frac{163}{60}e^{-z}+\frac{137}{60}e^{-2z}
  -\frac{21}{20}e^{-3z} +\frac{1}{5}e^{-4z}\right)^{\alpha}\widehat{u}(\omega),
  \end{split}
\end{equation*}
where $z=-i\omega h$. Using Lemma \ref{lemma2.3} leads to 
 \begin{equation*}
\begin{split}
&\left(\frac{137}{60}-\frac{163}{60}e^{-z}+\frac{137}{60}e^{-2z}-\frac{21}{20}e^{-3z} +\frac{1}{5}e^{-4z}\right)^{\alpha}\\
& =\left(1+\frac{1}{2}z+\frac{1}{12}z^2-\frac{1}{720}z^4+\mathcal{O}(z^5)\right)^{\alpha}\\
&=1+ \frac{\alpha}{2}z+\frac{\alpha(3\alpha-1)}{24}z^2+\frac{\alpha^2(\alpha-1)}{48}z^3
+\frac{15\alpha^4-30\alpha^3+5\alpha^2+2\alpha}{5760} z^4+\mathcal{O}(z^5),
  \end{split}
\end{equation*}
then we obtain
\begin{equation}\label{2.17}
\begin{split}
&e^{ pz} \left(\frac{1-e^{-z}}{z}\right )^{\alpha}\left(\frac{137}{60}-\frac{163}{60}e^{-z}+\frac{137}{60}e^{-2z}-\frac{21}{20}e^{-3z} +\frac{1}{5}e^{-4z}\right)^{\alpha}\\
&\quad   =1+pz+\frac{p^2}{2!}z^2+\frac{p^3}{3!}z^3+\frac{p^4}{4!}z^4+\mathcal{O}(z^5).
  \end{split}
\end{equation}
Therefore, from Lemma \ref{lemma2.1}, there exists 
\begin{equation*}
\begin{split}
\mathcal{F}(_{1L}A_{p}^{5,\alpha}u)(\omega)=\mathcal{F}(_{-\infty}D_x^{\alpha}u(x))+ \widehat{\phi}(\omega),
  \end{split}
\end{equation*}
where $ \widehat{\phi}(\omega)=(-i\omega)^{\alpha}\left(pz+\frac{p^2}{2!}z^2+\frac{p^3}{3!}z^3+\frac{p^4}{4!}z^4+\mathcal{O}(z^5)\right)\widehat{u}(\omega)$.
Then we get 
\begin{equation*}
\begin{split}
&|\widehat{\phi}(\omega)| \leq \widetilde{c}|i\omega|^{\alpha+1}|\widehat{u}(\omega)|\cdot h, ~~~~~~p\neq 0;\\
&|\widehat{\phi}(\omega)| \leq c|i\omega|^{\alpha+5}|\widehat{u}(\omega)|\cdot h^5, ~\,~~~p=0.
\end{split}
\end{equation*}
Hence
\begin{equation*}
\begin{split}
|_{-\infty}D_x^{\alpha}u(x)-\,_{1L}A_{p}^{5,\alpha}u(x)|=|\phi(x)| \leq \frac{1}{2\pi}\int_{\mathbb{R}}|\widehat{\phi}(\omega)|dx= \left\{ \begin{array}
 {l@{\quad} l}
\mathcal{O}(h),&p\neq 0;\\
\mathcal{O}(h^5),& p=0.
 \end{array}
 \right.
  \end{split}
\end{equation*}
\end{proof}

According to Lemmas \ref{lemma2.3}-\ref{lemma2.7}, the fractional approximation operators have the same form
$$_{1L}A_{p}^{\nu,\alpha}u(x)=\frac{1}{h^{\alpha}}\sum_{k=0}^{\infty}{l}_k^{\nu,\alpha}u(x-(k-p)h),~~\nu=1,2,3,4,5,$$
where $l_k^{1,\alpha}$, $l_k^{2,\alpha}$, $l_k^{3,\alpha}$, $l_k^{4,\alpha}$ and $l_k^{5,\alpha}$ are, respectively,
defined by (\ref{2.2}), (\ref{2.4}), (\ref{2.6}), (\ref{2.8})  and (\ref{2.10}). 
By the same idea of the proof in [\cite{Chen:13}],
we can get the following Theorems \ref{theorem2.2}-\ref{theorem2.7}; and for the simplicity, we omit the proofs here. 
\begin{theorem}\label{theorem2.2}(Case $\nu=3,4$; Second order approximations for left Riemann-Liouville  derivative)
 Let $u$, $_{-\infty}D_x^{\alpha+2}u(x)$ with $\alpha \in (1,2)$ and their Fourier transforms  belong to $L_1(\mathbb{R})$. Denote that
 \begin{equation}\label{2.18}
_{2L}A_{p,q}^{\nu,\alpha}u(x)=w_p^\nu\,_{1L}A_{p}^{\nu,\alpha}u(x)+w_q^\nu\,_{1L}A_{q}^{\nu,\alpha}u(x),~~\nu=3,4,
\end{equation}
where $_{1L}A_{p}^{3,\alpha}$ and $_{1L}A_{q}^{3,\alpha}$ are defined in (\ref{2.11}); $_{1L}A_{p}^{4,\alpha}$ and $_{1L}A_{q}^{4,\alpha}$ are defined in (\ref{2.14});  $w_p^\nu=\frac{q}{q-p}$,  $w_q^\nu=\frac{p}{p-q}$,  $\nu=3,4$,
 and   $p$, $q$ are integers, $p \neq  q$. Then
\begin{equation*}
\begin{split}
  &_{-\infty}D_x^{\alpha}u(x)=\,_{2L}A_{p,q}^{\nu,\alpha}u(x)+\mathcal{O}(h^2).
 \end{split}
\end{equation*}
\end{theorem}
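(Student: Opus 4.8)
The plan is to reduce Theorem \ref{theorem2.2} to the single-shift estimates already proved in Lemmas \ref{lemma2.6} and \ref{lemma2.7}, using the Fourier-side asymptotic expansions \eqref{2.13} and \eqref{2.15} as the essential input. First I would take the Fourier transform of the weighted operator \eqref{2.18}. Since the Fourier transform is linear, $\mathcal{F}({}_{2L}A_{p,q}^{\nu,\alpha}u)(\omega)=w_p^\nu\,\mathcal{F}({}_{1L}A_{p}^{\nu,\alpha}u)(\omega)+w_q^\nu\,\mathcal{F}({}_{1L}A_{q}^{\nu,\alpha}u)(\omega)$, and from the proofs of Lemmas \ref{lemma2.6}--\ref{lemma2.7} each term equals $(-i\omega)^\alpha$ times the respective generating factor evaluated at $z=-i\omega h$. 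For $\nu=3$ this factor is, by \eqref{2.13}, $1+pz+\tfrac{p^2}{2}z^2+\mathcal{O}(z^3)$ (the $z^3$ coefficient $\tfrac{2p^3-3\alpha}{12}$ being irrelevant now), and for $\nu=4$ it is, by \eqref{2.15}, $1+pz+\tfrac{p^2}{2!}z^2+\mathcal{O}(z^3)$; in both cases the structure is $1+pz+\tfrac12 p^2 z^2+\mathcal{O}(z^3)$, with the analogous expression in $q$ for the second shift.

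Next I would impose the weight conditions. The requirement is that the combined symbol agree with $(-i\omega)^\alpha\widehat u(\omega)$ up to $\mathcal{O}(z^2)$, i.e.\ up to $\mathcal{O}(h^2)$. Writing $G_\nu(z)$ for the bracketed symbol of ${}_{2L}A_{p,q}^{\nu,\alpha}$ divided by $(-i\omega)^\alpha\widehat u$, we get
\begin{equation*}
G_\nu(z)=\bigl(w_p^\nu+w_q^\nu\bigr)+\bigl(w_p^\nu p+w_q^\nu q\bigr)z+\tfrac12\bigl(w_p^\nu p^2+w_q^\nu q^2\bigr)z^2+\mathcal{O}(z^3).
\end{equation*}
Matching the constant term forces $w_p^\nu+w_q^\nu=1$, and killing the $\mathcal{O}(z)$ term forces $w_p^\nu p+w_q^\nu q=0$; this linear system has the unique solution $w_p^\nu=\frac{q}{q-p}$, $w_q^\nu=\frac{p}{p-q}$ quoted in the statement (well defined since $p\neq q$). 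One then checks that the $z^2$ coefficient does \emph{not} in general vanish, so the error is genuinely $\mathcal{O}(h^2)$ and no better; this is why the theorem claims second, not third, order. Consequently $\mathcal{F}({}_{2L}A_{p,q}^{\nu,\alpha}u)(\omega)=(-i\omega)^\alpha\widehat u(\omega)+\widehat\psi(\omega)$ with $\widehat\psi(\omega)=(-i\omega)^\alpha\bigl(\tfrac12(w_p^\nu p^2+w_q^\nu q^2)z^2+\mathcal{O}(z^3)\bigr)\widehat u(\omega)$, whence $|\widehat\psi(\omega)|\le c\,|\omega|^{\alpha+2}|\widehat u(\omega)|\,h^2$.

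Finally I would invert the Fourier transform exactly as in the previous lemmas: by Lemma \ref{lemma2.1}, ${}_{-\infty}D_x^\alpha u(x)-{}_{2L}A_{p,q}^{\nu,\alpha}u(x)=\psi(x)$, and
\begin{equation*}
|\psi(x)|\le\frac{1}{2\pi}\int_{\mathbb{R}}|\widehat\psi(\omega)|\,d\omega\le \frac{c\,h^2}{2\pi}\int_{\mathbb{R}}|\omega|^{\alpha+2}|\widehat u(\omega)|\,d\omega=\mathcal{O}(h^2),
\end{equation*}
which is finite precisely because ${}_{-\infty}D_x^{\alpha+2}u$ and its Fourier transform lie in $L_1(\mathbb{R})$ (so that $|\omega|^{\alpha+2}\widehat u(\omega)=\widehat{{}_{-\infty}D_x^{\alpha+2}u}(\omega)$ up to a unimodular factor, by Lemma \ref{lemma2.1}). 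The only mildly delicate point — the ``main obstacle'' — is bookkeeping the remainder terms uniformly: one must make sure the $\mathcal{O}(z^3)$ tails in \eqref{2.13} and \eqref{2.15} translate into a bound of the form $c|\omega|^{\alpha+3}|\widehat u|h^3$ that is dominated by the $\mathcal{O}(h^2)$ term for $h$ small (or is absorbed into it under the stated regularity), rather than a naive termwise expansion; this is handled exactly as in [\cite{Chen:13}], which is why the authors omit the details. Note also that, unlike Lemmas \ref{lemma2.6}--\ref{lemma2.7}, here the order is $\mathcal{O}(h^2)$ for \emph{all} admissible integer pairs $(p,q)$ with $p\neq q$, including those with $pq=0$, since the weighted cancellation of the first-order term no longer relies on choosing $p=0$.
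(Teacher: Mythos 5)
Your argument is correct and is essentially the proof the authors have in mind: the paper omits it precisely because it follows the same Fourier-symbol pattern as Lemmas \ref{lemma2.6}--\ref{lemma2.8} and the corresponding result in [\cite{Chen:13}] — linearity of $\mathcal{F}$, the expansions \eqref{2.13}/\eqref{2.15}, the weight identities $w_p^\nu+w_q^\nu=1$ and $w_p^\nu p+w_q^\nu q=0$, and inversion of the transform under the stated $L_1$ hypotheses. Your remarks on the uniform control of the $\mathcal{O}(z^3)$ tail and on the degenerate case $pq=0$ are correct refinements consistent with the paper's level of detail.
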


\begin{theorem}\label{theorem2.6}(Case $\nu=3,4$;  Third order approximations for left Riemann-Liouville  derivative)
 Let $u$, $_{-\infty}D_x^{\alpha+3}u(x)$ with $\alpha \in (1,2)$ and their Fourier transforms  belong to $L_1(\mathbb{R})$. Denote that
 \begin{equation}\label{2.19}
_{3L}A_{p,q,r,s}^{\nu,\alpha}u(x)=w_{p,q}^\nu\,_{2L}A_{p,q}^{\nu,\alpha}u(x)+w_{r,s}^\nu\,_{2L}A_{r,s}^{\nu,\alpha}u(x),~~~~\nu=3,4,
\end{equation}
where $_{2L}A_{p,q}^{\nu,\alpha}$ and $_{2L}A_{r,s}^{\nu,\alpha}$  are defined in (\ref{2.18}),
 $w_{p,q}^\nu=\frac{rs}{rs-pq}$, $w_{r,s}^\nu=\frac{pq}{pq-rs}$, $\nu=3,4$,
 and  $p$, $q$, $r$, $s$ are  integers, $pq \neq rs$. Then
\begin{equation*}
\begin{split}
  &_{-\infty}D_x^{\alpha}u(x)=\,_{3L}A_{p,q,r,s}^{\nu,\alpha}u(x)+\mathcal{O}(h^3).
 \end{split}
\end{equation*}
\end{theorem}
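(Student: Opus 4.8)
The plan is to rerun the Fourier-symbol computation already carried out for Lemmas~\ref{lemma2.6} and~\ref{lemma2.7}, pushing the expansion through order $z^{3}$ and then using the two successive layers of weights to annihilate the $z^{0}$, $z^{1}$ and $z^{2}$ parts of the discrete symbol. First, for $\nu=3,4$ and every integer $p$, the computations behind (\ref{2.13}) and (\ref{2.15}) can be recast as
\begin{equation*}
\mathcal{F}\bigl({}_{1L}A_{p}^{\nu,\alpha}u\bigr)(\omega)=(-i\omega)^{\alpha}\,\Phi_{p}^{\nu}(z)\,\widehat{u}(\omega),\qquad z=-i\omega h,
\end{equation*}
where $\Phi_{p}^{\nu}(z)=1+pz+\frac{p^{2}}{2}z^{2}+\beta^{\nu}(p)z^{3}+\mathcal{O}(z^{4})$, with $\beta^{3}(p)=\frac{2p^{3}-3\alpha}{12}$ and $\beta^{4}(p)=\frac{p^{3}}{6}$; the exact value of $\beta^{\nu}$ will turn out to be irrelevant.

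Second, I would substitute this into the one-level operator $_{2L}A_{p,q}^{\nu,\alpha}$ of (\ref{2.18}), whose symbol is $w_{p}^{\nu}\Phi_{p}^{\nu}(z)+w_{q}^{\nu}\Phi_{q}^{\nu}(z)$. The choice $w_{p}^{\nu}=\frac{q}{q-p}$, $w_{q}^{\nu}=\frac{p}{p-q}$ produces the identities $w_{p}^{\nu}+w_{q}^{\nu}=1$, $\,w_{p}^{\nu}p+w_{q}^{\nu}q=0$ and $\,w_{p}^{\nu}p^{2}+w_{q}^{\nu}q^{2}=-pq$, so that
\begin{equation*}
w_{p}^{\nu}\Phi_{p}^{\nu}(z)+w_{q}^{\nu}\Phi_{q}^{\nu}(z)=1-\frac{pq}{2}z^{2}+\gamma^{\nu}(p,q)z^{3}+\mathcal{O}(z^{4}).
\end{equation*}
The essential point is that the $z^{2}$-coefficient equals $-\frac{pq}{2}$, i.e.\ it depends on $p,q$ only through the product $pq$; this simultaneously re-proves Theorem~\ref{theorem2.2} and prepares the next cancellation, and note that $\beta^{\nu}$ does not enter this $z^{2}$-coefficient at all.

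Third, I would insert the last display into the two-level operator $_{3L}A_{p,q,r,s}^{\nu,\alpha}$ of (\ref{2.19}). With $w_{p,q}^{\nu}=\frac{rs}{rs-pq}$ and $w_{r,s}^{\nu}=\frac{pq}{pq-rs}$ one has $w_{p,q}^{\nu}+w_{r,s}^{\nu}=1$ and $w_{p,q}^{\nu}\,pq+w_{r,s}^{\nu}\,rs=0$, so the weighted sum of the two one-level symbols keeps constant term $1$, has vanishing $z$- and $z^{2}$-terms, and hence
\begin{equation*}
\mathcal{F}\bigl({}_{3L}A_{p,q,r,s}^{\nu,\alpha}u\bigr)(\omega)=(-i\omega)^{\alpha}\bigl(1+\mathcal{O}(z^{3})\bigr)\widehat{u}(\omega).
\end{equation*}
By Lemma~\ref{lemma2.1} this reads $\mathcal{F}({}_{3L}A_{p,q,r,s}^{\nu,\alpha}u)(\omega)=\mathcal{F}({}_{-\infty}D_{x}^{\alpha}u)(\omega)+\widehat{\phi}(\omega)$ with $\widehat{\phi}(\omega)=(-i\omega)^{\alpha}\mathcal{O}(z^{3})\widehat{u}(\omega)$, whence $|\widehat{\phi}(\omega)|\le C\,|\omega|^{\alpha+3}|\widehat{u}(\omega)|\,h^{3}$; since $(-i\omega)^{\alpha+3}\widehat{u}(\omega)=\mathcal{F}({}_{-\infty}D_{x}^{\alpha+3}u)(\omega)\in L_{1}(\mathbb{R})$ by hypothesis, inverting the Fourier transform exactly as in Lemmas~\ref{lemma2.6}--\ref{lemma2.7} gives
\begin{equation*}
\bigl|{}_{-\infty}D_{x}^{\alpha}u(x)-{}_{3L}A_{p,q,r,s}^{\nu,\alpha}u(x)\bigr|=|\phi(x)|\le\frac{1}{2\pi}\int_{\mathbb{R}}|\widehat{\phi}(\omega)|\,d\omega=\mathcal{O}(h^{3}).
\end{equation*}
The only delicate part of the argument is the bookkeeping: one must verify that the first pair of weight relations cancels the $z^{0}$ and $z^{1}$ terms while collapsing the $z^{2}$-coefficient into a multiple of $pq$, and that the second pair then cancels that $z^{2}$-term — after which no further cancellation is needed and everything reduces to the Taylor expansions already done in Lemmas~\ref{lemma2.6} and~\ref{lemma2.7}.
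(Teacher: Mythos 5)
Your proposal is correct and is essentially the argument the paper has in mind: the paper omits the proof, referring to the Fourier-symbol technique of its Lemmas \ref{lemma2.6}--\ref{lemma2.7} and of [\cite{Chen:13}], which is exactly what you carry out — expanding $e^{pz}\bigl(\tfrac{1-e^{-z}}{z}\bigr)^{\alpha}(\cdots)^{\alpha}=1+pz+\tfrac{p^{2}}{2}z^{2}+\mathcal{O}(z^{3})$ and using the two layers of weights to kill the $z$- and $z^{2}$-terms. Your weight identities ($w_{p}^{\nu}+w_{q}^{\nu}=1$, $w_{p}^{\nu}p+w_{q}^{\nu}q=0$, $w_{p}^{\nu}p^{2}+w_{q}^{\nu}q^{2}=-pq$, then $w_{p,q}^{\nu}pq+w_{r,s}^{\nu}rs=0$) check out, and the final inversion step matches the paper's treatment in Lemmas \ref{lemma2.6}--\ref{lemma2.8} at the same level of rigor.
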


\begin{theorem}\label{theorem2.7}(Case $\nu=3,4$; Fourth order approximations for left Riemann-Liouville  derivative)
 Let $u$, $_{-\infty}D_x^{\alpha+4}u(x)$ with $\alpha \in (1,2)$ and their Fourier transforms  belong to $L_1(\mathbb{R})$. Denote that
 \begin{equation}\label{2.20}
_{4L}A_{p,q,r,s,\overline{p},\overline{q},\overline{r},\overline{s}}^{\nu,\alpha}u(x)=w_{p,q,r,s}^\nu\,_{3L}A_{p,q,r,s}^{\nu,\alpha}u(x)
+w^\nu_{\overline{p},\overline{q},\overline{r},\overline{s}}\,_{3L}A_{\overline{p},\overline{q},\overline{r},\overline{s}}^{\nu,\alpha}u(x),~~\nu=3,4,
\end{equation}
where $_{3L}A_{p,q,r,s}^{\nu,\alpha}$ and $_{3L}A_{\overline{p},\overline{q},\overline{r},\overline{s}}^{\nu,\alpha}$  are defined in (\ref{2.19}), and
\begin{equation*}
\begin{split}
w^\nu_{p,q,r,s}=\frac{c^\nu_{\overline{p},\overline{q},\overline{r},\overline{s}}}
{c^\nu_{\overline{p},\overline{q},\overline{r},\overline{s}}-c^\nu_{p,q,r,s}}; \quad
w^\nu_{\overline{p},\overline{q},\overline{r},\overline{s}}
=\frac{c^\nu_{p,q,r,s}}
{c^\nu_{p,q,r,s}-c^\nu_{\overline{p},\overline{q},\overline{r},\overline{s}}};
 \end{split}
\end{equation*}
where
\begin{equation*}
\begin{split}
c^3_{p,q,r,s}=\frac{2pqrs(r+s-p-q)+3\alpha(pq-rs)}{12(rs-pq)};\quad
c^3_{\overline{p},\overline{q},\overline{r},\overline{s}}=
\frac{2\overline{p}\,\overline{q}\,\overline{r}\,\overline{s}(\overline{r}+\overline{s}-\overline{p}-\overline{q})
+3\alpha(\overline{p}\,\overline{q}-\overline{r}\,\overline{s})}{12(\overline{r}\,\overline{s}-\overline{p}\,\overline{q})};
 \end{split}
\end{equation*}
and
\begin{equation*}
\begin{split}
c^4_{p,q,r,s}=\frac{pqrs(r+s-p-q)}{6(rs-pq)};\quad
c^4_{\overline{p},\overline{q},\overline{r},\overline{s}}=
\frac{\overline{p}\,\overline{q}\,\overline{r}\,\overline{s}(\overline{r}+\overline{s}-\overline{p}-\overline{q})}
{6(\overline{r}\,\overline{s}-\overline{p}\,\overline{q})};
 \end{split}
\end{equation*}
 and  $p$, $q$, $r$, $s$;  $\overline{p}$, $\overline{q}$, $\overline{r}$, $\overline{s}$, are  integers,
 $ c^\nu_{p,q,r,s} \neq c^\nu_{\overline{p},\overline{q},\overline{r},\overline{s}}$. Then
\begin{equation*}
\begin{split}
  _{-\infty}D_x^{\alpha}u(x)=\,_{4L}A_{p,q,r,s,\overline{p},\overline{q},\overline{r},\overline{s}}^{\nu,\alpha}u(x)+\mathcal{O}(h^4).
 \end{split}
\end{equation*}
\end{theorem}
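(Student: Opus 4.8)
The plan is to proceed by the now-familiar bootstrapping (weighting-and-shifting) mechanism, exactly paralleling the chain Lemma~\ref{lemma2.6}/\ref{lemma2.7} $\to$ Theorem~\ref{theorem2.2} $\to$ Theorem~\ref{theorem2.6} $\to$ Theorem~\ref{theorem2.7}, so that at each stage one extra leading error term is cancelled by a convex-type combination of two lower-stage operators. First I would work entirely on the Fourier side. For $\nu = 3$, combining \eqref{2.13} with the identity $\mathcal{F}(_{1L}A_p^{3,\alpha}u)(\omega) = (-i\omega)^\alpha\, e^{pz}\bigl(\tfrac{1-e^{-z}}{z}\bigr)^\alpha \bigl(\tfrac{11}{6}-\tfrac{7}{6}e^{-z}+\tfrac13 e^{-2z}\bigr)^\alpha\widehat u(\omega)$ derived in the proof of Lemma~\ref{lemma2.6}, gives the symbol expansion
\begin{equation*}
\mathcal{F}(_{1L}A_p^{3,\alpha}u)(\omega) = (-i\omega)^\alpha\Bigl(1 + pz + \tfrac{p^2}{2}z^2 + \tfrac{2p^3-3\alpha}{12}z^3 + \mathcal{O}(z^4)\Bigr)\widehat u(\omega),
\end{equation*}
with $z = -i\omega h$; for $\nu = 4$ the corresponding expansion from \eqref{2.15} is $(-i\omega)^\alpha\bigl(1 + pz + \tfrac{p^2}{2!}z^2 + \tfrac{p^3}{3!}z^3 + \mathcal{O}(z^4)\bigr)\widehat u(\omega)$, i.e. the same with the $z^3$-coefficient simplified to $p^3/6$. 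I would record one more order of each expansion (the $z^4$ term), which is obtained by carrying \eqref{2.12}, \eqref{2.15} and Lemma~\ref{lemma2.3} to one further power — this is the only genuinely new computation, though still routine.

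Next I would assemble the stages. Forming $_{2L}A_{p,q}^{\nu,\alpha} = w_p^\nu\,{}_{1L}A_p^{\nu,\alpha} + w_q^\nu\,{}_{1L}A_q^{\nu,\alpha}$ with $w_p^\nu + w_q^\nu = 1$ kills the $O(z)$ term precisely when $w_p^\nu p + w_q^\nu q = 0$, forcing $w_p^\nu = q/(q-p)$, $w_q^\nu = p/(p-q)$ as in Theorem~\ref{theorem2.2}; then the symbol of $_{2L}A_{p,q}^{\nu,\alpha}$ has the form $(-i\omega)^\alpha(1 + c_2 z^2 + c_3^{\nu}z^3 + c_4^\nu z^4 + \mathcal{O}(z^5))$ with $c_2$ independent of $\nu$ and independent of $(p,q)$ (indeed $c_2$ equals the $\tfrac{1}{2}$-times-product coefficient one already sees in Lemma~\ref{lemma2.5}/\ref{lemma2.6}'s $z^2$ term), while $c_3^\nu$ depends on the pair through $pq$ and $p{+}q$. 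Combining two such second-stage operators with weights summing to $1$ and with the $z^2$-coefficients forced to cancel gives $w_{p,q}^\nu = rs/(rs-pq)$, $w_{r,s}^\nu = pq/(pq-rs)$ (Theorem~\ref{theorem2.6}), and the resulting third-stage symbol is $(-i\omega)^\alpha(1 + c^\nu_{p,q,r,s}\,z^3 + \tilde c^\nu z^4 + \mathcal{O}(z^5))$. Here I would compute $c^\nu_{p,q,r,s}$ explicitly and check it matches the formulas stated in the theorem: for $\nu=4$ the absence of the $\alpha$-term in the $z^3$ coefficient $p^3/6$ of the single-shift symbol propagates to $c^4_{p,q,r,s} = \dfrac{pqrs(r+s-p-q)}{6(rs-pq)}$, whereas for $\nu=3$ the extra $-3\alpha/12 = -\alpha/4$ contributes the $3\alpha(pq-rs)$ term, yielding $c^3_{p,q,r,s} = \dfrac{2pqrs(r+s-p-q)+3\alpha(pq-rs)}{12(rs-pq)}$.

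Finally, for the fourth stage I would take $_{4L}A^{\nu,\alpha} = w^\nu_{p,q,r,s}\,{}_{3L}A^{\nu,\alpha}_{p,q,r,s} + w^\nu_{\overline p,\overline q,\overline r,\overline s}\,{}_{3L}A^{\nu,\alpha}_{\overline p,\overline q,\overline r,\overline s}$ with the two weights summing to $1$; cancelling the common $z^3$-coefficient forces $w^\nu_{p,q,r,s} c^\nu_{p,q,r,s} + w^\nu_{\overline p,\overline q,\overline r,\overline s} c^\nu_{\overline p,\overline q,\overline r,\overline s} = 0$, which together with the normalization gives exactly the stated weights $w^\nu_{p,q,r,s} = c^\nu_{\overline p,\overline q,\overline r,\overline s}/(c^\nu_{\overline p,\overline q,\overline r,\overline s} - c^\nu_{p,q,r,s})$ and symmetrically for the other (this is where the hypothesis $c^\nu_{p,q,r,s}\neq c^\nu_{\overline p,\overline q,\overline r,\overline s}$ is used, so the denominators do not vanish). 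The symbol of $_{4L}A^{\nu,\alpha}$ is then $(-i\omega)^\alpha(1 + \mathcal{O}(z^4))\widehat u(\omega) = \mathcal{F}(_{-\infty}D_x^\alpha u)(\omega) + \widehat\phi(\omega)$ with $|\widehat\phi(\omega)| \le C|\omega|^{\alpha+4}|\widehat u(\omega)|\,h^4$; inverting the Fourier transform and using $_{-\infty}D_x^{\alpha+4}u \in L_1$ together with Lemma~\ref{lemma2.1} (so that $|\omega|^{\alpha+4}|\widehat u(\omega)|$ is integrable) gives $\|_{-\infty}D_x^\alpha u - {}_{4L}A^{\nu,\alpha}u\|_\infty \le \tfrac{1}{2\pi}\int_{\mathbb{R}}|\widehat\phi(\omega)|\,d\omega = \mathcal{O}(h^4)$, which is the claim. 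The main obstacle is purely bookkeeping: one must be careful that the $z^4$-coefficient of the single-shift symbol, which does survive into $_{4L}A^{\nu,\alpha}$, is handled correctly — it is not cancelled, it simply becomes the new leading error and must be shown to be $\mathcal{O}(z^4)$ uniformly, so that no hidden $z^3$-contribution leaks through; verifying that the two independent cancellation conditions at the second, third and fourth stages are mutually consistent (they are, because each stage only constrains one new coefficient) is the other point requiring care.
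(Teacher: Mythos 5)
Your proposal is correct and follows essentially the same weighted-and-shifted Fourier-symbol cancellation that the paper relies on (the paper omits this proof, referring to the identical argument in the earlier WSLD paper), and your derived coefficients $c^{\nu}_{p,q,r,s}$ and weights agree with the statement. The only slip is the parenthetical claim that the second-stage $z^2$-coefficient is independent of $(p,q)$: it equals $-pq/2$, which depends on the pair through the product $pq$ and is precisely why the third-stage weights $rs/(rs-pq)$ and $pq/(pq-rs)$ cancel it, as your own subsequent computation in fact uses.
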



For the right Riemann-Liouville fractional derivative, denote that
 \begin{equation}\label{2.21}
_{1R}A_{p}^{\nu,\alpha}u(x)=\frac{1}{h^{\alpha}}\sum_{k=0}^{\infty}{l}_k^{\nu,\alpha}u(x+(k-p)h),~~\nu=1,2,3,4,5,
\end{equation}
where $l_k^{1,\alpha}$, $l_k^{2,\alpha}$, $l_k^{3,\alpha}$, $l_k^{4,\alpha}$ and $l_k^{5,\alpha}$ are, respectively,
defined by (\ref{2.2}), (\ref{2.4}), (\ref{2.6}), (\ref{2.8})  and (\ref{2.10}), and $p$ is a integer.  In particular,
the coefficients in (\ref{2.22}) are completely  the same as the ones in (\ref{2.18}); and the coefficients in (\ref{2.23}) the same as
the ones in (\ref{2.19}); and the coefficients in (\ref{2.24}) the same as
the ones in (\ref{2.20}).

\begin{theorem}\label{theorem2.5}(Case $\nu=3,4$; Second order approximations for right Riemann-Liouville  derivative)
 Let $u$, $_xD_{\infty}^{\alpha+2}u(x)$ with $\alpha \in (1,2)$ and their Fourier transforms  belong to $L_1(\mathbb{R})$, and  denote that
 \begin{equation}\label{2.22}
_{2R}A_{p,q}^{\nu,\alpha}u(x)=w_p^\nu\,_{1R}A_{p}^{\nu,\alpha}u(x)+w_q^\nu\,_{1R}A_{q}^{\nu,\alpha}u(x),~~\nu=3,4,
\end{equation}
 then
\begin{equation*}
\begin{split}
  &_xD_{\infty}^{\alpha}u(x)=\,_{2R}A_{p,q}^{\nu,\alpha}u(x)+\mathcal{O}(h^2).
 \end{split}
\end{equation*}
\end{theorem}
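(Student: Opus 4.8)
The plan is to mirror, on the right side, the weighting-and-shifting argument already established for the left Riemann--Liouville derivative. The starting point is an analogue of Lemma \ref{lemma2.6} and Lemma \ref{lemma2.7} for the right-shifted operators $_{1R}A_{p}^{\nu,\alpha}$, $\nu=3,4$, defined in (\ref{2.21}). First I would take the Fourier transform of $_{1R}A_{p}^{\nu,\alpha}u(x)$. Since the shift is now $x+(k-p)h$ rather than $x-(k-p)h$, the computation produces $\mathcal{F}(u(x+(k-p)h))(\omega)=e^{i\omega ph}(e^{-i\omega h})^k\widehat u(\omega)$, so summing the series against the generating functions in (\ref{2.5}) and (\ref{2.7}) gives
\begin{equation*}
\mathcal{F}(_{1R}A_{p}^{\nu,\alpha}u)(\omega)=(i\omega)^{\alpha}e^{\,p\bar z}\Bigl(\frac{1-e^{-\bar z}}{\bar z}\Bigr)^{\alpha}W_\nu(e^{-\bar z})^{\alpha}\,\widehat u(\omega),
\end{equation*}
with $\bar z=i\omega h$ and $W_\nu$ the corresponding second factor (e.g.\ $\tfrac{11}{6}-\tfrac{7}{6}\zeta+\tfrac13\zeta^2$ for $\nu=3$). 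This is exactly the same scalar symbol as in the proofs of Lemma \ref{lemma2.6} and Lemma \ref{lemma2.7}, only with $z$ replaced by $\bar z$ and $(-i\omega)^\alpha$ replaced by $(i\omega)^\alpha$; hence the expansions (\ref{2.13}) and (\ref{2.15}) carry over verbatim, and by Lemma \ref{lemma2.1} (the right-derivative half) one gets $_xD_{\infty}^{\alpha}u(x)=\,_{1R}A_{p}^{\nu,\alpha}u(x)+\mathcal{O}(h)$ for $p\neq0$ and $\mathcal{O}(h^\nu)$ for $p=0$.

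Next I would form the weighted combination (\ref{2.22}), $_{2R}A_{p,q}^{\nu,\alpha}=w_p^\nu\,_{1R}A_{p}^{\nu,\alpha}+w_q^\nu\,_{1R}A_{q}^{\nu,\alpha}$ with $w_p^\nu=\frac{q}{q-p}$, $w_q^\nu=\frac{p}{p-q}$. On the Fourier side the symbol of $_{1R}A_{p}^{\nu,\alpha}u$ relative to $(i\omega)^{\alpha}\widehat u(\omega)$ has, by (\ref{2.13}) or (\ref{2.15}), the expansion $1+p\bar z+\tfrac{p^2}{2}\bar z^2+\mathcal{O}(\bar z^3)$ (the degree-three term differs between $\nu=3$ and $\nu=4$, but only the constant and linear coefficients matter here). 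The choice $w_p^\nu+w_q^\nu=1$ kills nothing and keeps consistency; the choice $w_p^\nu p+w_q^\nu q=0$, which is precisely what $w_p^\nu=\frac{q}{q-p}$, $w_q^\nu=\frac{p}{p-q}$ enforce, annihilates the $\mathcal{O}(\bar z)$ term. Thus the combined symbol equals $1+\mathcal{O}(\bar z^2)$, giving $\mathcal{F}(_{2R}A_{p,q}^{\nu,\alpha}u)(\omega)=(i\omega)^{\alpha}\widehat u(\omega)+\widehat\psi(\omega)$ with $|\widehat\psi(\omega)|\le C|\omega|^{\alpha+2}|\widehat u(\omega)|\,h^2$. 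Inverting the Fourier transform and using that $_xD_{\infty}^{\alpha+2}u$ and its transform lie in $L_1(\mathbb{R})$ then yields the pointwise bound $|_xD_{\infty}^{\alpha}u(x)-\,_{2R}A_{p,q}^{\nu,\alpha}u(x)|\le\frac{1}{2\pi}\int_{\mathbb R}|\widehat\psi(\omega)|\,d\omega=\mathcal{O}(h^2)$, which is the claim.

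I do not expect any genuine obstacle: the result is the right-sided twin of Theorem \ref{theorem2.2}, and the whole content is the bookkeeping that (i) reflecting $x\mapsto -x$ turns $_{-\infty}D_x^\alpha$ into $_xD_\infty^\alpha$ and $(-i\omega)^\alpha$ into $(i\omega)^\alpha$ while leaving every real Taylor coefficient in (\ref{2.13}), (\ref{2.15}) untouched, and (ii) the weights $w_p^\nu,w_q^\nu$ were designed to cancel the first-order term. The only point requiring a word of care is that, exactly as in Lemma \ref{lemma2.5} and Lemma \ref{lemma2.6}, the second-order accuracy needs $p\neq0$ \emph{or} $q\neq0$ to be handled through the first-order-accurate pieces combined with the linear-term cancellation, rather than relying on a higher-order expansion of a single operator; the regularity hypothesis $_xD_\infty^{\alpha+2}u\in L_1$ together with $\widehat{\,_xD_\infty^{\alpha+2}u\,}\in L_1$ is precisely what makes the remainder integral converge. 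Given the paper's stated intent, I would simply remark that the proof is identical to that of Theorem \ref{theorem2.2} after the substitution $(-i\omega)\rightsquigarrow(i\omega)$ and omit the repeated computation.
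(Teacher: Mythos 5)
Your proposal is correct and follows exactly the route the paper intends: the paper omits this proof, stating only that it follows by the same Fourier-transform weighting argument as the left-sided case, and your computation (symbol $(i\omega)^\alpha e^{p\bar z}\bigl(\frac{1-e^{-\bar z}}{\bar z}\bigr)^\alpha W_\nu(e^{-\bar z})^\alpha$ with $\bar z=i\omega h$, then cancellation of the linear term via $w_p^\nu p+w_q^\nu q=0$) is precisely that argument transplanted to the right derivative. No gaps.
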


\begin{theorem}\label{theorem2.6}(Case $\nu=3,4$;  Third order approximations for right Riemann-Liouville  derivative)
 Let $u$, $_xD_{\infty}^{\alpha+3}u(x)$ with $\alpha \in (1,2)$ and their Fourier transforms  belong to $L_1(\mathbb{R})$, and  denote that
 \begin{equation}\label{2.23}
_{3R}A_{p,q,r,s}^{\nu,\alpha}u(x)=w_{p,q}^\nu\,_{2R}A_{p,q}^{\nu,\alpha}u(x)+w_{r,s}^\nu\,_{2R}A_{r,s}^{\nu,\alpha}u(x),~~\nu=3,4,
\end{equation}
then
\begin{equation*}
\begin{split}
  &_xD_{\infty}^{\alpha}u(x)=\,_{3R}A_{p,q,r,s}^{\nu,\alpha}u(x)+\mathcal{O}(h^3).
 \end{split}
\end{equation*}
\end{theorem}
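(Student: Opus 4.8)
The plan is to repeat, essentially verbatim, the Fourier-transform argument used for the left-sided operators in Lemmas~\ref{lemma2.6} and~\ref{lemma2.7}, the only modifications being that the right-sided operators (\ref{2.21}) replace the left-sided ones (which replaces $z=-i\omega h$ by $z=i\omega h$ and the symbol $(-i\omega)^{\alpha}$ by $(i\omega)^{\alpha}$), and that one must then track the effect of the two successive weightings in (\ref{2.22}) and (\ref{2.23}) on the symbol of the shifted operator --- exactly the bookkeeping carried out for the left derivative in \cite{Chen:13}. Throughout, $\nu\in\{3,4\}$ and the weights are those recalled before Theorem~\ref{theorem2.5}, i.e.\ $w_{p}^{\nu}=\frac{q}{q-p}$, $w_{q}^{\nu}=\frac{p}{p-q}$, $w_{p,q}^{\nu}=\frac{rs}{rs-pq}$, $w_{r,s}^{\nu}=\frac{pq}{pq-rs}$.

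First I would compute the symbol of the basic right Lubich operator ${}_{1R}A_{p}^{\nu,\alpha}$. Applying the Fourier transform to (\ref{2.21}), using $\mathcal{F}\big(u(x+(k-p)h)\big)(\omega)=e^{-i\omega(k-p)h}\widehat u(\omega)$ together with the generating-function identities (\ref{2.5}) and (\ref{2.7}), one gets, with $z=i\omega h$,
\begin{equation*}
\mathcal{F}\big({}_{1R}A_{p}^{3,\alpha}u\big)(\omega)=(i\omega)^{\alpha}\,e^{pz}\Big(\frac{1-e^{-z}}{z}\Big)^{\alpha}\Big(\frac{11}{6}-\frac{7}{6}e^{-z}+\frac{1}{3}e^{-2z}\Big)^{\alpha}\widehat u(\omega),
\end{equation*}
and the analogous identity with $\big(\frac{25}{12}-\frac{23}{12}e^{-z}+\frac{13}{12}e^{-2z}-\frac14 e^{-3z}\big)^{\alpha}$ in place of the last factor when $\nu=4$. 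Since these are the same formal power series in $z$ as in the left-sided case, (\ref{2.13}) and (\ref{2.15}) give that the analytic factor multiplying $(i\omega)^{\alpha}\widehat u(\omega)$ equals $1+pz+\frac{p^{2}}{2}z^{2}+c_{3}^{(\nu)}z^{3}+\mathcal{O}(z^{4})$, where $c_{3}^{(3)}=\frac{2p^{3}-3\alpha}{12}$ and $c_{3}^{(4)}=\frac{p^{3}}{6}$.

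Next I would pass to the two weightings. At the first level $w_{p}^{\nu}+w_{q}^{\nu}=1$ and $w_{p}^{\nu}p+w_{q}^{\nu}q=0$, so the factor multiplying $(i\omega)^{\alpha}\widehat u(\omega)$ in the symbol of ${}_{2R}A_{p,q}^{\nu,\alpha}$ is $1+0\cdot z+\frac12\big(w_{p}^{\nu}p^{2}+w_{q}^{\nu}q^{2}\big)z^{2}+\mathcal{O}(z^{3})=1-\frac{pq}{2}z^{2}+\mathcal{O}(z^{3})$, using the identity $w_{p}^{\nu}p^{2}+w_{q}^{\nu}q^{2}=-pq$ (this recovers the $\mathcal{O}(h^{2})$ accuracy of Theorem~\ref{theorem2.5}). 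At the second level $w_{p,q}^{\nu}+w_{r,s}^{\nu}=1$ and $w_{p,q}^{\nu}\,pq+w_{r,s}^{\nu}\,rs=0$, so the $z^{2}$ coefficient of the corresponding factor for ${}_{3R}A_{p,q,r,s}^{\nu,\alpha}$ equals $-\frac12\big(w_{p,q}^{\nu}\,pq+w_{r,s}^{\nu}\,rs\big)=0$, while the $z^{0}$ and $z^{1}$ coefficients stay $1$ and $0$. Hence, using Lemma~\ref{lemma2.1} for the leading term,
\begin{equation*}
\mathcal{F}\big({}_{3R}A_{p,q,r,s}^{\nu,\alpha}u\big)(\omega)=\mathcal{F}\big({}_{x}D_{\infty}^{\alpha}u\big)(\omega)+\widehat{\phi}(\omega),\qquad \widehat{\phi}(\omega)=(i\omega)^{\alpha}\,\mathcal{O}\big((i\omega h)^{3}\big)\,\widehat u(\omega).
\end{equation*}

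Finally, as in the proof of Lemma~\ref{lemma2.6}, I would bound $|\widehat{\phi}(\omega)|\le C\,|\omega|^{\alpha+3}|\widehat u(\omega)|\,h^{3}=C\,\big|\mathcal{F}\big({}_{x}D_{\infty}^{\alpha+3}u\big)(\omega)\big|\,h^{3}$, which lies in $L_{1}(\mathbb{R})$ by hypothesis, and invert the Fourier transform to conclude $\big|{}_{x}D_{\infty}^{\alpha}u(x)-{}_{3R}A_{p,q,r,s}^{\nu,\alpha}u(x)\big|\le\frac{1}{2\pi}\int_{\mathbb{R}}|\widehat{\phi}(\omega)|\,d\omega=\mathcal{O}(h^{3})$. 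The step that needs the most care --- though it is the same one already dealt with in the cases $\nu=1,2$ --- is upgrading the local expansion ``symbol factor $=1+\mathcal{O}(z^{3})$'' to the uniform estimate on $\widehat{\phi}$: for $|z|\le1$ this is analyticity of the symbol factor together with its vanishing to third order at $z=0$, while for $|z|\ge1$ it follows from boundedness of the symbol factor on the imaginary $z$-axis, which holds because $\frac{11}{6}-\frac{7}{6}w+\frac13 w^{2}$ and $\frac{25}{12}-\frac{23}{12}w+\frac{13}{12}w^{2}-\frac14 w^{3}$ have no zeros on the circle $|w|=1$. Apart from that, everything reduces to the routine bookkeeping with the two weight identities above, so no new idea beyond \cite{Chen:13} is required.
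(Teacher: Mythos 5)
Your proof is correct and follows exactly the route the paper intends: the paper omits the proofs of Theorems \ref{theorem2.2}--\ref{theorem2.7}, saying they follow "by the same idea" as the left-sided Fourier-symbol arguments of Lemmas \ref{lemma2.6}--\ref{lemma2.8} and of \cite{Chen:13}, and your symbol expansion plus the two weight cancellations ($w_p^\nu p+w_q^\nu q=0$, $w_p^\nu p^2+w_q^\nu q^2=-pq$, $w_{p,q}^\nu pq+w_{r,s}^\nu rs=0$) is precisely that argument transposed to the right-sided operator. Your extra remark on upgrading the local expansion to a uniform bound on $\widehat\phi$ is a point the paper itself glosses over, and is handled correctly.
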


\begin{theorem}\label{theorem2.7}(Case $\nu=3,4$; Fourth order approximations for right Riemann-Liouville  derivative)
 Let $u$, $_xD_{\infty}^{\alpha+4}u(x)$ with $\alpha \in (1,2)$ and their Fourier transforms  belong to $L_1(\mathbb{R})$, and  denote that
 \begin{equation}\label{2.24}
_{4R}A_{p,q,r,s,\overline{p},\overline{q},\overline{r},\overline{s}}^{\nu,\alpha}u(x)=w_{p,q,r,s}^\nu\,_{3R}A_{p,q,r,s}^{\nu,\alpha}u(x)
+w^\nu_{\overline{p},\overline{q},\overline{r},\overline{s}}\,_{3R}A_{\overline{p},\overline{q},\overline{r},\overline{s}}^{\nu,\alpha}u(x),~~\nu=3,4,
\end{equation}
then
\begin{equation*}
\begin{split}
  _xD_{\infty}^{\alpha}u(x)=\,_{4R}A_{p,q,r,s,\overline{p},\overline{q},\overline{r},\overline{s}}^{\nu,\alpha}u(x)+\mathcal{O}(h^4).
 \end{split}
\end{equation*}
\end{theorem}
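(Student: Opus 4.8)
The plan is to mirror, for the right Riemann--Liouville derivative, the same construction that yields Theorem \ref{theorem2.7} (case $\nu=3,4$) for the left derivative, exploiting the symmetry between $_{-\infty}D_x^\alpha$ and $_{x}D_{\infty}^\alpha$. The starting point is the observation that $_{1R}A_p^{\nu,\alpha}u(x)$ defined in (\ref{2.21}) is obtained from $_{1L}A_p^{\nu,\alpha}u(x)$ by the reflection $x\mapsto -x$; equivalently, if $v(x)=u(-x)$ then $_{1R}A_p^{\nu,\alpha}u(x)=\,_{1L}A_p^{\nu,\alpha}v(-x)$, and by Lemma \ref{lemma2.1} the Fourier symbol $(-i\omega)^\alpha$ is simply replaced by $(i\omega)^\alpha$ while the generating-function factor $\left(\tfrac{1-e^{-z}}{z}\right)^\alpha(\cdots)^\alpha$ is unchanged with $z=i\omega h$ instead of $z=-i\omega h$.

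First I would redo the Fourier-transform computation of Lemmas \ref{lemma2.6} and \ref{lemma2.7} with this sign change, obtaining
\begin{equation*}
\mathcal{F}(_{1R}A_{p}^{\nu,\alpha}u)(\omega)=(i\omega)^{\alpha}\,e^{pz}\left(\frac{1-e^{-z}}{z}\right)^{\alpha}\Big(\textstyle\sum_j c_j^{\nu}e^{-jz}\Big)^{\alpha}\widehat u(\omega),\qquad z=i\omega h,
\end{equation*}
so that the same expansions (\ref{2.13}) and (\ref{2.15}) apply verbatim: for $\nu=4$ the bracket equals $1+pz+\tfrac{p^2}{2!}z^2+\tfrac{p^3}{3!}z^3+\mathcal{O}(z^4)$, and for $\nu=3$ it equals $1+pz+\tfrac{p^2}{2}z^2+\tfrac{2p^3-3\alpha}{12}z^3+\mathcal{O}(z^4)$. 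Hence $_{1R}A_p^{\nu,\alpha}u=\,_xD_\infty^\alpha u+\mathcal{O}(h)$ for $p\neq0$, with the leading error term carrying exactly the same polynomial coefficients in $p$ (and in $\alpha$, for $\nu=3$) as in the left case.

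Next I would form the three successive weighted-and-shifted combinations exactly as in (\ref{2.22})--(\ref{2.24}). Taking $_{2R}A_{p,q}^{\nu,\alpha}=w_p^\nu\,_{1R}A_p^{\nu,\alpha}+w_q^\nu\,_{1R}A_q^{\nu,\alpha}$ with $w_p^\nu=\tfrac{q}{q-p}$, $w_q^\nu=\tfrac{p}{p-q}$ cancels the $z$-order term (coefficient linear in the shift), giving $\mathcal{O}(h^2)$; combining two such operators with weights $w_{p,q}^\nu=\tfrac{rs}{rs-pq}$, $w_{r,s}^\nu=\tfrac{pq}{pq-rs}$ cancels the $z^2$-order term (coefficient $\propto$ product of the two shifts), giving $\mathcal{O}(h^3)$; and the final combination with weights built from $c_{p,q,r,s}^\nu$ cancels the residual $z^3$-order coefficient — which is precisely $c_{p,q,r,s}^3=\tfrac{2pqrs(r+s-p-q)+3\alpha(pq-rs)}{12(rs-pq)}$ for $\nu=3$ and $c_{p,q,r,s}^4=\tfrac{pqrs(r+s-p-q)}{6(rs-pq)}$ for $\nu=4$, exactly as stated. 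Because the symbol bracket is identical to the left case and only the prefactor changed from $(-i\omega)^\alpha$ to $(i\omega)^\alpha$, every weight and every constant $c^\nu$ coincides with those in Theorem \ref{theorem2.7} for the left derivative, which is the assertion after (\ref{2.24}) that ``the coefficients in (\ref{2.24}) the same as the ones in (\ref{2.20}).'' Finally, the error bound $|\widehat\phi(\omega)|\le c|i\omega|^{\alpha+4}|\widehat u(\omega)|\,h^4$ together with $_xD_\infty^{\alpha+4}u\in L_1$ and the inversion estimate $|\phi(x)|\le\tfrac1{2\pi}\int_{\mathbb R}|\widehat\phi(\omega)|\,d\omega$ gives the $\mathcal{O}(h^4)$ conclusion.

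The only real obstacle is bookkeeping: one must verify that the three cancellation conditions, when written in terms of the shift indices, produce exactly the weight formulas quoted in (\ref{2.22})--(\ref{2.24}) and the constants $c^3,c^4$, i.e. that the reflection $z=-i\omega h\mapsto z=i\omega h$ genuinely leaves the bracket expansions (\ref{2.13}) and (\ref{2.15}) invariant (it does, since those brackets were already even in the sense that they were derived from the generating function $\delta^\alpha$ evaluated at $e^{-z}$, with no explicit dependence on the sign of $\omega$). Once that is noted, the proof is a line-by-line transcription of the proof of Theorem \ref{theorem2.7} for the left derivative, which is exactly why it is reasonable to state these three theorems together and, as the authors do, omit the details.
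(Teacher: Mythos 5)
Your proposal is correct and follows exactly the route the paper intends: the authors omit the proof, noting only that the weights in (\ref{2.24}) coincide with those in (\ref{2.20}) and that the argument repeats the left-derivative case, which is precisely the Fourier-symbol computation with $(-i\omega)^{\alpha}$ replaced by $(i\omega)^{\alpha}$ and $z=i\omega h$ that you carry out. The one cosmetic point is that the bracket expansions (\ref{2.13}) and (\ref{2.15}) transfer not because of any evenness but simply because they are identities in the formal variable $z$ valid for any small complex $z$; your conclusion is unaffected.
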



All the above schemes are applicable to finite domain, say, $(x_L, x_R)$, after performing zero extensions to the functions considered. Let $u(x)$ be the zero extended function from the finite domain $(x_L, x_R)$, and satisfy the requirements of the above corresponding theorems (Lemma \ref{lemma2.6} - Theorem \ref{theorem2.7}). Taking
 \begin{equation}\label{2.25}
  \begin{split}
&_{1L}\widetilde{A}_{p}^{\nu,\alpha}u(x)=\frac{1}{h^{\alpha}}\sum_{k=0}^{[\frac{x-x_L}{h}]+p}l_k^{\nu,\alpha}u(x-(k-p)h), ~~\nu=3,4.
\end{split}
\end{equation}
 Then
\begin{equation}\label{2.26}
  \begin{split}
    _{x_L}D_{x}^{\alpha}u(x)&=\,_{1L}\widetilde{A}_{p}^{\nu,\alpha}u(x)+\mathcal{O}(h),~~~~~~~~~~~~~~~~p \neq 0;\\
_{x_L}D_{x}^{\alpha}u(x)&=\,_{1L}\widetilde{A}_{p}^{\nu,\alpha}u(x)+\mathcal{O}(h^\nu),~~~~~~~~~~~~~~p = 0;\\
_{x_L}D_{x}^{\alpha}u(x)&=\,_{2L}\widetilde{A}_{p,q}^{\nu,\alpha}u(x)+\mathcal{O}(h^2),~~~~~~~~~~~~~~
 _{2L}\widetilde{A}_{p,q}^{\nu,\alpha}u(x)=w_p^\nu\,_{1L}\widetilde{A}_{p}^{\nu,\alpha}u(x)+w_q^\nu\,_{1L}\widetilde{A}_{q}^{\nu,\alpha}u(x);\\
 _{x_L}D_{x}^{\alpha}u(x)&=\,_{3L}\widetilde{A}_{p,q,r,s}^{\nu,\alpha}u(x)+\mathcal{O}(h^3), ~~~~~~~~~~~
 _{3L}\widetilde{A}_{p,q,r,s}^{\alpha}u(x)=w_{p,q}^\nu\,_{2L}\widetilde{A}_{p,q}^{\nu,\alpha} u(x)+w_{r,s}^\nu\,_{2L}\widetilde{A}_{p,q}^{\nu,\alpha} u(x);\\
 _{x_L}D_{x}^{\alpha}u(x)&=\,_{4L}\widetilde{A}_{p,q,r,s,\overline{p},\overline{q},\overline{r},\overline{s}}^{\nu,\alpha}u(x)+\mathcal{O}(h^4); \\
  \end{split}
\end{equation}
where
$$
_{4L}\widetilde{A}_{p,q,r,s,\overline{p},\overline{q},\overline{r},\overline{s}}^{\nu,\alpha}u(x)
=w^\nu_{p,q,r,s}\,_{3L}\widetilde{A}_{p,q,r,s}^{\nu,\alpha} u(x)
+w^\nu_{\overline{p},\overline{q},\overline{r},\overline{s}}\,_{3L}\widetilde{A}_{\overline{p},\overline{q},\overline{r},\overline{s}}^{\nu,\alpha} u(x).
$$
Denoting  $x_i=x_L+ih$, $i=-m,\ldots, 0,1,\ldots,N_x-1,N_x,\ldots,N_x+m$,
and $h=(x_R-x_L)/N_x$ being the uniform spacestep, it can be note that
 \begin{equation*}
u(x_i)= 0, \quad {\rm for} \quad i=-m,-m+1,\ldots,0 \quad \mbox{and}\quad   i=N_x, N_x+1,\ldots, N_x+m,
\end{equation*}
where
\begin{equation}\label{2.27}
  m=\max({\rm abs}(p,q,r,s,\overline{p},\overline{q},\overline{r},\overline{s})).
\end{equation}
 Then the approximation operator of (\ref{2.25}) can be  described as
 \begin{equation}\label{2.28}
  \begin{split}
&_{1L}\widetilde{A}_{p}^{\nu,\alpha}u(x_i)=\frac{1}{h^{\alpha}}\sum_{k=0}^{i+p}l_k^{\nu,\alpha}u(x_{i-k+p})
=\frac{1}{h^{\alpha}}\sum_{k=m-p}^{i+m}l_{k+p-m}^{\nu,\alpha}u(x_{i-k+m})
=\frac{1}{h^{\alpha}}\sum_{k=0}^{i+m}l_{k+p-m}^{\nu,\alpha}u(x_{i-k+m}),
\end{split}
\end{equation}
where $l_{k+p-m}^{\nu,\alpha}=0$, when $k+p-m<0$, and $p$ is an integer.
Then
\begin{equation}\label{2.29}
\begin{split}
_{x_L}D_{x}^{\alpha}u(x_i)&=\,_{1L}\widetilde{A}_{p}^{\nu,\alpha}u(x_i)+\mathcal{O}(h)
=\frac{1}{h^{\alpha}}\sum_{k=0}^{i+m}l_{k+p-m}^{\nu,\alpha}u(x_{i-k+m})+\mathcal{O}(h), ~~~\,~~p\neq 0;\\
_{x_L}D_{x}^{\alpha}u(x_i)&=\,_{1L}\widetilde{A}_{p}^{\nu,\alpha}u(x_i)+\mathcal{O}(h^\nu)
=\frac{1}{h^{\alpha}}\sum_{k=0}^{i+m}l_{k+p-m}^{\nu,\alpha}u(x_{i-k+m})+\mathcal{O}(h^\nu), ~~p= 0;\\
_{x_L}D_{x}^{\alpha}u(x_i)&= \,_{2L}\widetilde{A}_{p,q}^{\nu,\alpha}u(x_i) +\mathcal{O}(h^2)
=\frac{1}{h^{\alpha}}\sum_{k=0}^{i+m}(w^\nu_pl_{k+p-m}^{\nu,\alpha}+w^\nu_ql_{k+q-m}^{\nu,\alpha})u(x_{i-k+m}) +\mathcal{O}(h^2);\\
_{x_L}D_{x}^{\alpha}u(x_i)&=\,_{3L}\widetilde{A}_{p,q,r,s}^{\nu,\alpha}u(x)+\mathcal{O}(h^3)
=\frac{1}{h^{\alpha}}\sum_{k=0}^{i+m}(w^\nu_{p,q}w^\nu_pl_{k+p-m}^{\nu,\alpha}\!+\!w^\nu_{p,q}w^\nu_ql_{k+q-m}^{\nu,\alpha}\\
&~~~~~~~~~~~~~~~~~~~~~~~~~~~~~~~~~~~~~~~~~~~~~~~ +w^\nu_{r,s}w^\nu_rl_{k+r-m}^{\nu,\alpha}\!+\!w^\nu_{r,s}w^\nu_sl_{k+s-m}^{\nu,\alpha})u(x_{i-k+m}) +\mathcal{O}(h^3);\\
_{x_L}D_{x}^{\alpha}u(x_i)&=\,_{4L}\widetilde{A}_{p,q,r,s,\overline{p},\overline{q},\overline{r},\overline{s}}^{\nu,\alpha}u(x_i)+\mathcal{O}(h^4)
=\frac{1}{h^{\alpha}}\sum_{k=0}^{i+m}\varphi_k^{\nu,\alpha}u(x_{i-k+m})+\mathcal{O}(h^4),
 \end{split}
\end{equation}
where
\begin{equation}\label{2.30}
\begin{split}
\varphi_k^{\nu,\alpha}&=w^\nu_{p,q,r,s}  w^\nu_{p,q}w^\nu_pl_{k+p-m}^{\nu,\alpha}+ w^\nu_{p,q,r,s}w^\nu_{p,q}w^\nu_ql_{k+q-m}^{\nu,\alpha}
+w^\nu_{p,q,r,s}w^\nu_{r,s}w^\nu_rl_{k+r-m}^{\nu,\alpha}\\
&\quad +w^\nu_{p,q,r,s}w^\nu_{r,s}w^\nu_sl_{k+s-m}^{\nu,\alpha} +w^\nu_{\overline{p},\overline{q},\overline{r},\overline{s}}w^\nu_{\overline{p},\overline{q}}w^\nu_{\overline{p}}l_{k+\overline{p}-m}^{\nu,\alpha}
+w^\nu_{\overline{p},\overline{q},\overline{r},\overline{s}}w^\nu_{\overline{p},\overline{q}}w^\nu_{\overline{q}}l_{k+\overline{q}-m}^{\nu,\alpha}\\
&\quad +w^\nu_{\overline{p},\overline{q},\overline{r},\overline{s}}w^\nu_{\overline{r},\overline{s}}w^\nu_{\overline{r}}l_{k+\overline{r}-m}^{\nu,\alpha}
+w^\nu_{\overline{p},\overline{q},\overline{r},\overline{s}}w^\nu_{\overline{r},\overline{s}}w^\nu_{\overline{s}}l_{k+\overline{s}-m}^{\nu,\alpha}.
\end{split}
\end{equation}
Taking $U=[u({x_0}),u({x_1}),\cdots,u({x_{N_x}})]^{\rm T}$,  then (\ref{2.28}) can be rewritten  as matrix form
\begin{equation}\label{2.31}
  \begin{split}
&_{1L}\widetilde{A}_{p}^{\nu,\alpha}U=\frac{1}{h^{\alpha}}A_{p}^{\nu,\alpha} U,
\end{split}
\end{equation}
where
\begin{equation}\label{2.32}
A_p^{\nu,\alpha}=\left [ \begin{matrix}
l_p^{\nu,\alpha}     &l_{p-1}^{\nu,\alpha}  & \cdots         &l_0^{\nu,\alpha}    &             &                  &               &        \\
l_{p+1}^{\nu,\alpha} &l_p^{\nu,\alpha}      &l_{p-1}^{\nu,\alpha} &  \cdots       & l_0^{\nu,\alpha} &                  &               &         \\
l_{p+2}^{\nu,\alpha} &l_{p+1}^{\nu,\alpha}  &l_p^{\nu,\alpha}     &l_{p-1}^{\nu,\alpha}&\cdots       &    l_0^{\nu,\alpha}   &               &          \\
\vdots          &    \ddots       &\ddots          & \ddots        & \ddots      &   \cdots         &     \ddots    &           \\
l_{n-2}^{\nu,\alpha} &\cdots           & \ddots         &l_{p+1}^{\nu,\alpha}&l_{p}^{\nu,\alpha}&  l_{p-1}^{\nu,\alpha} & \cdots        &l_0^{\nu,\alpha} \\
\vdots          &\ddots           & \cdots         & \ddots        & \ddots      &   \ddots         & \ddots        &\vdots       \\
l_{p+n-3}^{\nu,\alpha}&\cdots           &  \ddots        & \cdots        & \ddots      &  l_{p+1}^{\nu,\alpha} &l_{p}^{\nu,\alpha}  &l_{p-1}^\alpha\\
l_{p+n-2}^{\nu,\alpha}&l_{p+n-3}^{\nu,\alpha}&  \cdots        &l_{n-2}^{\nu,\alpha}& \cdots      &    l_{p+2}^\alpha&l_{p+1}^{\nu,\alpha}&l_{p}^{\nu,\alpha} \end{matrix}
 \right ],
\end{equation}
and $l_{k}^{\nu,\alpha}=0$, when $k<0$, and $p$ is an integer. From (\ref{2.26}) and (\ref{2.31}) we obtain
\begin{equation}\label{2.33}
\begin{split}
_{2L}\widetilde{A}_{p,q}^{{\nu,\alpha}}U=\frac{1}{h^{{\alpha}}}A_{p,q}^{\nu,\alpha} U,
&\quad A_{p,q}^{\nu,\alpha} =w^\nu_pA_{p}^{\nu,\alpha}+w^\nu_qA_{q}^{\nu,\alpha};        \\
_{3L}\widetilde{A}_{p,q,r,s}^{{\nu,\alpha}}U=\frac{1}{h^{{\alpha}}}A_{p,q,r,s}^{\nu,\alpha} U,
&\quad A_{p,q,r,s}^{\nu,\alpha} =w^\nu_{p,q}A^{\nu,\alpha}_{p,q}+w^\nu_{r,s}A^{\nu,\alpha}_{r,s};            \\
_{4L}\widetilde{A}_{p,q,r,s,\overline{p},\overline{q},\overline{r},\overline{s}}^{{\nu,\alpha}}U
=\frac{1}{h^{\alpha}}A_{p,q,r,s,\overline{p},\overline{q},\overline{r},\overline{s}}^{\nu,\alpha} U,
&\quad A_{p,q,r,s,\overline{p},\overline{q},\overline{r},\overline{s}}^{\nu,\alpha} =w^\nu_{p,q,r,s}A_{p,q,r,s}^{\nu,\alpha}
+w^\nu_{\overline{p},\overline{q},\overline{r},\overline{s}}A_{\overline{p},\overline{q},\overline{r},\overline{s}}^{\nu,\alpha}.
 \end{split}
\end{equation}
Similarly, for the right Riemann-Liouville fractional derivative, assume that
 \begin{equation*}
  \begin{split}
&_{1R}\widetilde{A}_{p}^{\nu,\alpha}u(x)=\frac{1}{h^{\alpha}}\sum_{k=0}^{[\frac{x_R-x}{h}]+p}l_k^{\nu,\alpha}u(x+(k-p)h),
\end{split}
\end{equation*}
then  there exists
\begin{equation*}
  \begin{split}
&_{1R}\widetilde{A}_{p}^{{\nu,\alpha}}u(x_i)=\frac{1}{h^{\alpha}}\!\!\!\sum_{k=0}^{N_x-i+p}\!\!\!l_k^{{\nu,\alpha}}u(x_{i+k-p})
=\frac{1}{h^{{\alpha}}}\!\!\!\sum_{k=m-p}^{N_x-i+m}\!\!\!l_{k+p-m}^{{\nu,\alpha}}u(x_{i+k-m})
=\frac{1}{h^{{\alpha}}}\!\!\!\sum_{k=0}^{N_x-i+m}\!\!\!l_{k+p-m}^{{\nu,\alpha}}u(x_{i+k-m}),
\end{split}
\end{equation*}
where $l_{k+p-m}^{\nu,\alpha}=0$, $k+p-m<0$, and $p$ is an integer. And the  fourth order approximation is
\begin{equation}\label{2.34}
\begin{split}
&_{x}D_{x_R}^{\alpha}u(x_i)=\,_{4R}\widetilde{A}_{p,q,r,s,\overline{p},\overline{q},\overline{r},\overline{s}}^{\nu,\alpha}u(x_i)+\mathcal{O}(h^4)
=\frac{1}{h^{\alpha}}\sum_{k=0}^{N_x-i+m}\varphi_k^{\nu,\alpha}u(x_{i+k-m})+\mathcal{O}(h^4),
 \end{split}
\end{equation}
where  $\varphi_k^{\nu,\alpha}$ is defined by (\ref{2.30}), and the matrices  forms are
\begin{equation}\label{2.35}
  \begin{split}
_{1R}\widetilde{A}_{p}^{{\nu,\alpha}}U=\frac{1}{h^{{\alpha}}}B_{p}^{\nu,\alpha} U, ~~~~ &B_{p}^{\nu,\alpha}=(A_{p}^{\nu,\alpha})^T\\
_{2R}\widetilde{A}_{p,q}^{{\nu,\alpha}}U=\frac{1}{h^{{\alpha}}}B_{p,q}^{\nu,\alpha} U, \quad
&B_{p,q}^{\nu,\alpha} =w^\nu_pB_{p}^{\nu,\alpha}+w^\nu_qB_{q}^{\nu,\alpha};\\
_{3R}\widetilde{A}_{p,q,r,s}^{{\nu,\alpha}}U=\frac{1}{h^{{\alpha}}}B_{p,q,r,s}^{\nu,\alpha} U, \quad
&B_{p,q,r,s}^{\nu,\alpha} =w^\nu_{p,q}B_{p,q}^{{\nu,\alpha}}+w^\nu_{r,s}B_{r,s}^{{\nu,\alpha}};\\
_{4R}\widetilde{A}_{p,q,r,s,\overline{p},\overline{q},\overline{r},\overline{s}}^{{\nu,\alpha}}U
=\frac{1}{h^{{\alpha}}}B_{p,q,r,s,\overline{p},\overline{q},\overline{r},\overline{s}}^{\nu,\alpha} U, \quad
&B_{p,q,r,s,\overline{p},\overline{q},\overline{r},\overline{s}}^{\nu,\alpha} =w^\nu_{p,q,r,s}B_{p,q,r,s}^{\nu,\alpha}
+w^\nu_{\overline{p},\overline{q},\overline{r},\overline{s}}B_{\overline{p},\overline{q},\overline{r},\overline{s}}^{\nu,\alpha}.
\end{split}
\end{equation}

\begin{remark}\label{remark2.1}
We want to emphasize that for $\nu=3$ and $4$ the coefficients in (\ref{2.18}) are the same, but they correspond to different discretization schemes; the same thing also happens for (\ref{2.19}), (\ref{2.22}), and (\ref{2.23}).
\end{remark}

\begin{remark}\label{remark2.2}
When $p=0$,  ${\alpha} \in(1,2)$ and $\nu=3,4$, then  $A_p^{{\nu,\alpha}}$ in (\ref{2.32}) reduces to the lower triangular matrix,
and it can be easily checked that all the eigenvalues of $A_p^{{\nu,\alpha}}$  are greater than one.
This is the reason that the scheme for time dependent problem
is unstable when directly using the $\nu$-order ($\nu=2,3,4,5$) Lubich's operator with ${\alpha} \in(1,2)$  to discretize space
fractional derivative.
\end{remark}

\begin{remark}\label{remark2.3}
If $\alpha<0$, $ \{l_k^{\nu,\alpha}\}_{k=0}^{\infty}$ ($\nu=1,2,3,4,5$) correspond to the coefficients of  the $\nu$-order  approximation of fractional integral operators.
\end{remark}

\begin{example}
To numerically verify the truncation error given in Lemma \ref{lemma2.8} ($\nu=5, p=0$) in a bounded domain.
We utilize the approximation (\ref{2.28}) to simulate the following equation
$$_{0}D_x^{\alpha}u(x)=\frac{\Gamma(9)}{\Gamma(9-\alpha)}x^{8-\alpha},~~x \in(0,1),$$
when $\alpha <0$, the fractional operator $_{0}D_x^{\alpha}$ is a fractional integral operator; if $\alpha \in (0,1)$ we take $u(0)=0$; and if $\alpha \in (1,2)$ let $u(0)=0$, $u(1)=1$; the exact solution of the above equation is $u(x)=x^8$.
\end{example}
\begin{table}[h]\fontsize{9.5pt}{12pt}\selectfont
 \begin{center}
  \caption {The maximum errors and convergent orders for  the second equation of (\ref{2.29}), when $\nu=5$, $p=0$.} \vspace{5pt}
\begin{tabular*}{\linewidth}{@{\extracolsep{\fill}}*{10}{c}}                                    \hline  
$h$& $\alpha=-0.5$&  Rate       & $\alpha=0.5$  & Rate       & $\alpha=1.8$ &   Rate    \\\hline
~~~1/10&  8.0041e-04  &             & 4.0005e-03     &            & 6.9882e-02   &        \\
~~~1/20&  4.9935e-05  &  4.0026     & 2.0652e-04     & 4.2758     & 2.8034e-03   & 4.6397   \\
~~~1/40&  2.1214e-06  &  4.5570     & 7.8935e-06     & 4.7095     & 1.2005e-04   & 4.5454   \\
~~~1/60&  3.0790e-07  &  4.7601     & 9.3316e-07     & 5.2661     & 1.3775e-05   & 5.3397   \\ \hline
    \end{tabular*}\label{example:1}
  \end{center}
\end{table}
Table \ref{example:1} numerically verifies Lemma \ref{lemma2.8}, and shows that the truncation errors are $\mathcal{O}(h^5)$.

\subsection{Effective fourth order discretization for space fractional derivatives  }
This subsection focuses on how to choose the parameters ${p,q,r,s,\overline{p},\overline{q},\overline{r},\overline{s}}$ such that all the
eigenvalues of the matrix $A_{p,q,r,s,\overline{p},\overline{q},\overline{r},\overline{s}}^{\nu,\alpha}$, ($\nu=3,4$)
have negative real parts; this means that the corresponding schemes work for space fractional derivatives. Since
$B_{p,q,r,s,\overline{p},\overline{q},\overline{r},\overline{s}}^{\nu,\alpha}$ is the transpose of
$A_{p,q,r,s,\overline{p},\overline{q},\overline{r},\overline{s}}^{\nu,\alpha}$, we don't need to discuss them separately.

\begin{lemma}\cite[p.\,28]{Quarteroni:07}\label{lemma2.9}
A real matrix $A$ of order $n$ is positive definite  if and only if  its symmetric part $H=\frac{A+A^T}{2}$ is positive definite.
Let $H \in \mathbb{R}^{n\times n}$ be symmetric, then $H$ is positive definite if and only if the eigenvalues of $H$ are positive.
\end{lemma}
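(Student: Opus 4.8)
The plan is to argue directly from the variational definition of positive definiteness: a real matrix $M\in\mathbb{R}^{n\times n}$ is positive definite exactly when $x^{\rm T}Mx>0$ for every nonzero $x\in\mathbb{R}^n$. No deep machinery is needed; the only external ingredient is the spectral theorem for real symmetric matrices, which I would cite rather than reprove.

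First I would reduce $A$ to its symmetric part. For any $x\in\mathbb{R}^n$ the number $x^{\rm T}Ax$ is a scalar, hence equals its transpose $x^{\rm T}A^{\rm T}x$; averaging these two equal expressions gives $x^{\rm T}Ax=\frac{1}{2}x^{\rm T}(A+A^{\rm T})x=x^{\rm T}Hx$. Thus the defining inequality holds for $A$ if and only if it holds for $H$, which is the first assertion.

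Second, for the symmetric matrix $H$ I would invoke the spectral theorem to write $H=Q\Lambda Q^{\rm T}$ with $Q$ orthogonal and $\Lambda={\rm diag}(\lambda_1,\ldots,\lambda_n)$ collecting the (necessarily real) eigenvalues of $H$. Putting $y=Q^{\rm T}x$, which is a bijection of $\mathbb{R}^n$ carrying nonzero vectors to nonzero vectors since $Q$ is invertible, one obtains $x^{\rm T}Hx=y^{\rm T}\Lambda y=\sum_{k=1}^{n}\lambda_k y_k^2$. If every $\lambda_k>0$ this is strictly positive whenever $y\neq0$, so $H$ is positive definite; conversely, choosing $x$ to be the $k$-th column of $Q$ gives $x^{\rm T}Hx=\lambda_k$, so positive definiteness of $H$ forces each $\lambda_k>0$. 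This establishes the second assertion and finishes the proof.

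Because each step is a routine computation together with one appeal to the spectral theorem, there is no genuine obstacle here; the statement is recorded only so that it can be applied to the symmetric part $H=\frac{1}{2}\big(A_{p,q,r,s,\overline{p},\overline{q},\overline{r},\overline{s}}^{\nu,\alpha}+(A_{p,q,r,s,\overline{p},\overline{q},\overline{r},\overline{s}}^{\nu,\alpha})^{\rm T}\big)$ in the eigenvalue analysis that follows, where the real work will be to exhibit parameter choices making $H$ negative definite.
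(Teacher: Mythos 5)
Your proof is correct: the identity $x^{\rm T}Ax=x^{\rm T}Hx$ handles the first assertion, and the spectral theorem handles the second, with both directions of each equivalence properly argued. The paper offers no proof of its own here — it simply cites Quarteroni et al. — and your argument is precisely the standard textbook one that citation points to, so there is nothing to reconcile.
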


\begin{lemma}\cite[p.\,184]{Quarteroni:07}\label{lemma2.10}
If $A \in \mathbb{C}^{n \times n}$, let $H=\frac{A+A^H}{2}$ be the hermitian part of $A$,  then for any eigenvalue $\lambda$ of  $A$,
the real part $\Re(\lambda(A))$ satisfies
\begin{equation*}
  \lambda_{\min}(H) \leq \Re(\lambda(A)) \leq \lambda_{\max}(H),
\end{equation*}
where $\lambda_{\min}(H)$ and $\lambda_{\max}(H)$ are the minimum and maximum of the eigenvalues of $H$, respectively.
\end{lemma}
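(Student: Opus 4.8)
The plan is to prove Lemma~\ref{lemma2.10} by writing every eigenvalue of $A$ as a quadratic form in its eigenvector and then isolating the real part, which converts the problem into a Rayleigh-quotient estimate for the Hermitian matrix $H$.

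First I would fix an arbitrary eigenvalue $\lambda$ of $A$ together with an associated eigenvector $x\in\mathbb{C}^n$, normalized so that $x^Hx=1$. From $Ax=\lambda x$ we then get $x^HAx=\lambda x^Hx=\lambda$, so the eigenvalue is exactly the value of the form $x\mapsto x^HAx$ at this particular unit vector. Next I would take real parts: using $\Re(\lambda)=\tfrac12(\lambda+\overline{\lambda})$ and the identity $\overline{x^HAx}=x^HA^Hx$ (a scalar equals its own conjugate transpose), one obtains
\[
\Re(\lambda)=\tfrac12\left(x^HAx+x^HA^Hx\right)=x^H\!\left(\frac{A+A^H}{2}\right)\!x=x^HHx .
\]
Since $H=H^H$ this number is automatically real, which is consistent with the left-hand side.

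Finally I would invoke the standard Rayleigh-quotient bounds for the Hermitian matrix $H$: by the spectral theorem $H=U\Lambda U^H$ with $U$ unitary and $\Lambda=\mathrm{diag}(\mu_1,\dots,\mu_n)$ real, so for any unit vector $x$, putting $y=U^Hx$ (again a unit vector), the number $x^HHx=\sum_j\mu_j|y_j|^2$ is a convex combination of the $\mu_j$ and hence lies in $[\lambda_{\min}(H),\lambda_{\max}(H)]$. Specializing this to the eigenvector constructed above yields $\lambda_{\min}(H)\le\Re(\lambda)\le\lambda_{\max}(H)$, and since $\lambda$ was an arbitrary eigenvalue of $A$ the claim follows.

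I do not expect a genuine obstacle here: the statement is just that the numerical range $\{x^HAx:\|x\|_2=1\}$ of $A$ is contained in the vertical strip $\{z:\lambda_{\min}(H)\le\Re(z)\le\lambda_{\max}(H)\}$, and the only point that deserves a line of care is the identity $\overline{x^HAx}=x^HA^Hx$, i.e.\ that conjugation of a $1\times1$ matrix coincides with conjugate transposition. Lemma~\ref{lemma2.9} follows from the same Rayleigh-quotient fact applied directly to the symmetric (Hermitian) matrix $H$, so the two lemmas rest on essentially one argument.
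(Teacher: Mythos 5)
Your proof is correct and complete: the identity $\Re(\lambda)=x^{H}Hx$ for a unit eigenvector $x$, combined with the Rayleigh-quotient bounds for the Hermitian matrix $H$, is exactly the standard (Bendixson-type) argument for this fact. The paper itself gives no proof — it simply cites the result from Quarteroni, Sacco \& Saleri — so there is nothing to compare against, but your argument is the canonical one and has no gaps.
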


\begin{definition}\cite[p.\,13]{Chan:07}\label{definition2.8}
Let  $n \times n$ Toeplitz  matrix  $T_n$ be of the following form:
\begin{equation*}
T_n=\left [ \begin{matrix}
                      t_0           &      t_{-1}             &      \cdots         &       t_{2-n}       &       t_{1-n}      \\
                      t_{1}         &      t_{0}              &      t_{-1}         &      \cdots         &       t_{2-n}        \\
                     \vdots         &      t_{1}              &      t_{0}          &      \ddots         &        \vdots            \\
                     t_{n-2}        &      \cdots             &      \ddots         &      \ddots         &        t_{-1}    \\
                     t_{n-1}        &       t_{n-2}           &      \cdots         &       t_1           &        t_0
 \end{matrix}
 \right ];
\end{equation*}
i.e., $t_{i,j}=t_{i-j}$ and $T_n$ is constant along its diagonals. Assume that the diagonals $\{t_k\}_{k=-n+1}^{n-1}$ are the Fourier coefficients of a function
$f$, i.e.,
\begin{equation*}
  t_k=\frac{1}{2\pi}\int_{-\pi}^{\pi}f(x)e^{-ikx}dx,
\end{equation*}
then the function $f$ is called the generating function of $T_n$.
\end{definition}

\begin{lemma}\cite[p.\,13-15]{Chan:07}\label{lemma2.11} (Grenander-Szeg\"{o} theorem) Let $T_n$ be given by above matrix with a generating function $f$,
where $f$ is a $2\pi$-periodic continuous real-valued functions defined on $[-\pi,\pi]$.
Let $\lambda_{\min}(T_n)$ and $\lambda_{\max}(T_n)$ denote the smallest and largest eigenvalues of $T_n$, respectively. Then we have
\begin{equation*}
  f_{\min} \leq \lambda_{\min}(T_n) \leq \lambda_{\max}(T_n) \leq f_{\max},
\end{equation*}
where $f_{\min}$ and  $f_{\max}$  is the minimum and maximum values of $f(x)$, respectively.
Moreover, if $f_{\min}< f_{\max}$, then all eigenvalues of $T_n$ satisfies
\begin{equation*}
  f_{\min} < \lambda(T_n) < f_{\max},
\end{equation*}
for all $n>0$; In particular, if  $f_{\min}>0$, then $T_n$ is positive definite.
\end{lemma}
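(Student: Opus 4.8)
The plan is to reduce everything to the Rayleigh quotient of $T_n$ together with the integral representation of its entries. First I would note that because $f$ is real-valued we have $t_{-k}=\overline{t_k}$, so $T_n$ is Hermitian, its eigenvalues are real, and the Courant--Fischer characterization applies: $\lambda_{\min}(T_n)=\min_{v\neq 0}(v^{H}T_nv)/(v^{H}v)$ and $\lambda_{\max}(T_n)=\max_{v\neq 0}(v^{H}T_nv)/(v^{H}v)$. Next, for an arbitrary $v=(v_0,\dots,v_{n-1})^{T}\in\mathbb{C}^{n}$ I would insert $t_{j-k}=\frac{1}{2\pi}\int_{-\pi}^{\pi}f(x)e^{-i(j-k)x}\,dx$ into $v^{H}T_nv=\sum_{j,k=0}^{n-1}\overline{v_j}v_k\,t_{j-k}$ and factor the exponential sum, obtaining
\begin{equation*}
v^{H}T_nv=\frac{1}{2\pi}\int_{-\pi}^{\pi}f(x)\,\Bigl|\sum_{k=0}^{n-1}v_ke^{ikx}\Bigr|^{2}dx=:\frac{1}{2\pi}\int_{-\pi}^{\pi}f(x)\,|p_v(x)|^{2}\,dx,
\end{equation*}
where $p_v$ is a trigonometric polynomial of degree at most $n-1$; by Parseval's identity $\frac{1}{2\pi}\int_{-\pi}^{\pi}|p_v(x)|^{2}dx=v^{H}v$.

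From here the two-sided bound is immediate: since $f_{\min}\le f(x)\le f_{\max}$ and $|p_v(x)|^{2}\ge 0$ pointwise, integrating gives $f_{\min}\,v^{H}v\le v^{H}T_nv\le f_{\max}\,v^{H}v$, and dividing by $v^{H}v$ and taking $\min$ and $\max$ over $v\neq 0$ yields $f_{\min}\le\lambda_{\min}(T_n)\le\lambda_{\max}(T_n)\le f_{\max}$; the positive-definiteness assertion for $f_{\min}>0$ then follows at once.

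For the strict inequalities under the hypothesis $f_{\min}<f_{\max}$ I would argue by contradiction. If $\lambda_{\max}(T_n)=f_{\max}$, let $v$ be a corresponding unit eigenvector; then $\frac{1}{2\pi}\int_{-\pi}^{\pi}(f_{\max}-f(x))\,|p_v(x)|^{2}dx=0$ with a nonnegative, continuous integrand, hence $(f_{\max}-f(x))|p_v(x)|^{2}\equiv 0$ on $[-\pi,\pi]$. Writing $z=e^{ix}$, $p_v$ corresponds to a nonzero algebraic polynomial in $z$ of degree at most $n-1$, so $|p_v(x)|^{2}$ vanishes only at finitely many $x$; therefore $f(x)=f_{\max}$ off a finite set and, by continuity, $f\equiv f_{\max}$, contradicting $f_{\min}<f_{\max}$. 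Applying the same reasoning to $f(x)-f_{\min}$ excludes $\lambda_{\min}(T_n)=f_{\min}$, so every eigenvalue lies strictly between $f_{\min}$ and $f_{\max}$.

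The exponential bookkeeping that turns the quadratic form into $\frac{1}{2\pi}\int f\,|p_v|^{2}$ and the appeal to Parseval are routine; I expect the only subtle point to be the final step, namely the fact that a nontrivial trigonometric polynomial of bounded degree cannot have its modulus vanish on a set of positive measure, which is exactly what promotes the weak inequalities to strict ones.
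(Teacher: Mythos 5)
Your proof is correct. The paper does not prove this lemma at all --- it is quoted verbatim as a known result from the cited reference (Chan \& Jin, pp.~13--15) --- and your argument via the Rayleigh quotient, the integral representation $v^{H}T_nv=\frac{1}{2\pi}\int_{-\pi}^{\pi}f(x)|p_v(x)|^{2}\,dx$, Parseval, and the finite zero set of a nontrivial trigonometric polynomial is exactly the standard proof given in that reference, with the equality case handled correctly by continuity of $f$.
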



\begin{lemma}\label{lemma2.12}
Let $A_{p,q}^{\nu,\alpha} $ be given in (\ref{2.33}),
 $(p,q)=(1,q)$, $q$ is an integer ($q\neq 1$), $1<\alpha<2$ and denote that
\begin{equation}\label{2.37}
H^\nu_{p,q}=\frac{A_{p,q}^{\nu,\alpha}+\left(A_{p,q}^{\nu,\alpha}\right)^T}{2},~~\nu=3,4,
\end{equation}
then the generating function of $H_{p,q}^{\nu}$ is  
\begin{equation}\label{2.36}
\begin{split}
f^\nu_{p,q}(\alpha,x)&=\left(2sin\frac{x}{2}\right)^{\alpha}\left( a_\nu^2+b_\nu^2\right)^{\frac{\alpha}{2}}
 \cdot \left[w_p^\nu\cos\left (\alpha(\frac{x}{2}-\frac{\pi}{2}+\theta)-x\right )+w_q^\nu\cos\left (\alpha(\frac{x}{2}-\frac{\pi}{2}+\theta)-qx\right)\right],
\end{split}
\end{equation}
where $w_p^\nu$ and $w_q^\nu$ are defined in (\ref{2.18}), $\theta=-\arctan\frac{b_\nu}{a_\nu}$,
$$a_3=\frac{1}{6}\left(11-7\cos(x)+2\cos(2x)\right),~~b_3=\frac{1}{6}\left(7\sin(x)-2\sin(2x)\right);$$
and
$$a_4=\frac{1}{12}\left(25-23\cos(x)+13\cos(2x)-3\cos(3x)\right),~~b_4=\frac{1}{12}\left(23\sin(x)-13\sin(2x)+3\sin(3x)\right).$$

\end{lemma}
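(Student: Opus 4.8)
The plan is to compute the generating function of the Toeplitz matrix $H^\nu_{p,q}$ by following the definition (Definition \ref{definition2.8}) together with the structure of $A_p^{\nu,\alpha}$ in (\ref{2.32}). First I would identify the symbol of $A_p^{\nu,\alpha}$ itself: since the $(i,j)$ entry is $l_{i-j+p}^{\nu,\alpha}$ (with the convention that negative-indexed coefficients vanish), the matrix $A_p^{\nu,\alpha}$ is Toeplitz with generating function $g_p^\nu(x)=\sum_{k=0}^\infty l_k^{\nu,\alpha}e^{i(k-p)x}=e^{-ipx}\sum_{k=0}^\infty l_k^{\nu,\alpha}e^{ikx}$. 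By (\ref{2.5}) for $\nu=3$ and (\ref{2.7}) for $\nu=4$, the power series $\sum_{k=0}^\infty l_k^{\nu,\alpha}\zeta^k$ evaluated at $\zeta=e^{ix}$ equals $\big(1-e^{ix}\big)^{\alpha}\cdot c_\nu(e^{ix})^{\alpha}$, where $c_3(\zeta)=\tfrac{11}{6}-\tfrac76\zeta+\tfrac13\zeta^2$ and $c_4(\zeta)=\tfrac{25}{12}-\tfrac{23}{12}\zeta+\tfrac{13}{12}\zeta^2-\tfrac14\zeta^3$. So $g_p^\nu(x)=e^{-ipx}(1-e^{ix})^\alpha c_\nu(e^{ix})^\alpha$.

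Next I would write these two factors in polar form. For $(1-e^{ix})^\alpha$ I would use the standard identity $1-e^{ix}=-2i\sin(x/2)e^{ix/2}=2\sin(x/2)e^{i(x/2-\pi/2)}$ (valid for $x\in(0,2\pi)$, which is the relevant range), so $(1-e^{ix})^\alpha=(2\sin(x/2))^\alpha e^{i\alpha(x/2-\pi/2)}$. For $c_\nu(e^{ix})$ I would separate real and imaginary parts: writing $c_\nu(e^{ix})=a_\nu(x)-i\,b_\nu(x)$ (the sign chosen to match the stated $b_\nu$), a direct expansion of $\tfrac{11}{6}-\tfrac76 e^{ix}+\tfrac13 e^{2ix}$ gives exactly $a_3=\tfrac16(11-7\cos x+2\cos 2x)$ and $b_3=\tfrac16(7\sin x-2\sin 2x)$, and similarly for $\nu=4$. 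Then $c_\nu(e^{ix})^\alpha=(a_\nu^2+b_\nu^2)^{\alpha/2}e^{i\alpha\theta}$ with $\theta=\arg(a_\nu-ib_\nu)=-\arctan(b_\nu/a_\nu)$. Combining, $g_p^\nu(x)=(2\sin(x/2))^\alpha(a_\nu^2+b_\nu^2)^{\alpha/2}e^{i(\alpha(x/2-\pi/2+\theta)-px)}$.

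Finally I would assemble $H^\nu_{p,q}$. Since $A_{p,q}^{\nu,\alpha}=w_p^\nu A_p^{\nu,\alpha}+w_q^\nu A_q^{\nu,\alpha}$ with real weights, its generating function is $w_p^\nu g_p^\nu(x)+w_q^\nu g_q^\nu(x)$; and the symmetric part corresponds to taking the real part of the symbol, i.e. $f^\nu_{p,q}(\alpha,x)=\Re\big(w_p^\nu g_p^\nu(x)+w_q^\nu g_q^\nu(x)\big)$. Taking the real part of each term turns $e^{i(\alpha(x/2-\pi/2+\theta)-px)}$ into $\cos(\alpha(x/2-\pi/2+\theta)-px)$, which yields precisely (\ref{2.36}). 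Here I would invoke the elementary fact that the generating function of $(A+A^T)/2$ is the real part of the generating function of $A$: if $A$ has symbol $\sum_k t_k e^{ikx}$, then $A^T$ has symbol $\sum_k t_{-k}e^{ikx}=\sum_k t_k e^{-ikx}=\overline{\text{(symbol)}}$ when the $t_k$ are real (which they are, being the real Lubich coefficients times real weights), so the average has symbol $\Re(\sum_k t_k e^{ikx})$.

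The routine but slightly delicate steps are the bookkeeping of signs — making sure the branch of $(1-e^{ix})^\alpha$ and the sign convention $c_\nu(e^{ix})=a_\nu-ib_\nu$ are chosen consistently so that the phases add up as in the claimed formula — and verifying by direct expansion that the real/imaginary parts of $c_\nu(e^{ix})$ match the stated $a_\nu,b_\nu$. I do not expect a genuine obstacle here; the only place one must be careful is that $H^\nu_{p,q}$ is genuinely Toeplitz (the boundary rows of $A_p^{\nu,\alpha}$ in (\ref{2.32}) are truncations of the same constant diagonals, so the matrix is an honest finite Toeplitz section) and that the identification of "generating function of the symmetric part" with "real part of the generating function" is legitimate, which follows immediately from the reality of the coefficients.
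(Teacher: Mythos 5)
Your proposal is correct and follows essentially the same route as the paper: factor the symbol via (\ref{2.5})/(\ref{2.7}), convert $(1-e^{ix})^\alpha$ and $c_\nu(e^{ix})=a_\nu-ib_\nu$ to polar form, and use that the symbol of $(A+A^T)/2$ is the real part of the symbol of $A$ because the coefficients are real. The only (cosmetic) difference is that you work with the symbol of each $A_p^{\nu,\alpha}$ separately and combine by linearity, which lets you skip the paper's explicit case split into $q\leq 0$ and $q\geq 2$ with the combined coefficients $\phi_k^{\nu,\alpha}$.
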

\begin{proof}
\begin{description}
 \item[(1)] For $(p,q)=(1,q)$,  $q\leq 0$, we have  $A_{p,q}^{\nu,\alpha} =w^\nu_pA_{p}^{\nu,\alpha}+w^\nu_qA_{q}^{\nu,\alpha}$ and
\end{description}
\begin{equation*}
A_{p,q}^{\nu,\alpha}=\left [ \begin{matrix}
\phi_1^{{\nu,\alpha}}             &\phi_0^{{\nu,\alpha}}         &                          &                          &                             \\
\phi_2^{{\nu,\alpha}}             &\phi_1^{{\nu,\alpha}}         &\phi_0^{{\nu,\alpha}}           &                          &                            \\
\vdots                      &\ddots                  &       \ddots             &        \ddots            &     \\
\phi_{N_x-2}^{{\nu,\alpha}}       &\ddots                  &        \ddots            &   \phi_1^{{\nu,\alpha}}        & \phi_0^{{\nu,\alpha}} \\
\phi_{N_x-1}^{{\nu,\alpha}}       &\phi_{N_x-2}^{{\nu,\alpha}}   &         \cdots           &\phi_2^{{\nu,\alpha}}           & \phi_1^{{\nu,\alpha}}
 \end{matrix}
 \right ],
\end{equation*}
with
 \begin{equation*}
\phi_k^{{\nu,\alpha}}=\left\{ \begin{array}
 {l@{\quad} l}
w^\nu_pl_k^{\nu,\alpha},&0\leq k\leq -q,\\\\
w^\nu_pl_k^{\nu,\alpha}+w^\nu_ql_{k+q-1}^{\nu,\alpha},&k>-q.
 \end{array}
 \right.
\end{equation*}
The generating functions of $A_{p,q}^{\nu,\alpha}$ and $(A_{p,q}^{{\nu,\alpha}})^T$ are
\begin{equation*}
  f_{A_{p,q}^{\nu,\alpha}}(x)=\sum_{k=0}^{\infty}\phi_k^{{\nu,\alpha}}e^{i(k-1)x},~~\mbox{and}~~ f_{(A_{p,q}^{\nu,\alpha})^T}(x)=\sum_{k=0}^{\infty}\phi_k^{{\nu,\alpha}}e^{-i(k-1)x},
\end{equation*}
respectively.   Then
 $$f^\nu_{p,q}(\alpha,x)=\frac{f_{A_{p,q}^{\nu,\alpha}}(x)\,+\,f_{(A_{p,q}^{\nu,\alpha})^T}(x)}{2},$$
 is the generating function of $H^\nu_{p,q}$.
Since $f_{A_{p,q}^{\nu,\alpha}}(x)$ and $f_{(A_{p,q}^{\nu,\alpha})^T}(x)$ are mutually conjugated, then  $f^\nu_{p,q}({\alpha},x)$ is a $2\pi$-periodic continuous real-valued functions defined on $[-\pi,\pi]$. Moreover, $f^\nu_{p,q}({\alpha},x)$ is an even function, so we just need to  consider its principal value on $[0,\pi]$. According to the following equations
\begin{equation*}
  (1-e^{\pm ix})^{\alpha}=\left(2sin\frac{x}{2}\right)^{\alpha}e^{\pm i\alpha(\frac{x}{2}-\frac{\pi}{2})},
  ~~\left(a_\nu - b_\nu i\right)^{\alpha}=\left( a_\nu^2+b_\nu^2\right)^{\frac{\alpha}{2}} e^{i\alpha \theta},
\end{equation*}
where $\nu=3,4$ and $\theta=-\arctan\frac{b_\nu}{a_\nu}$,  then we have
\begin{equation*}
\begin{split}
  f_{A_{p,q}^{3,\alpha}}(x)&=\sum_{k=0}^{\infty}\phi_k^{{3,\alpha}}e^{i(k-1)x}
  =w_p^3 e^{-ix}\sum_{k=0}^{\infty}l_k^{{3,\alpha}}e^{ikx} +w_q^3 e^{-iqx}\sum_{k=0}^{\infty}l_k^{{3,\alpha}}e^{ikx}\\
  &=w_p^3 e^{-ix}(1-e^{ix})^\alpha \left( \frac{11}{6}-\frac{7}{6}e^{ix}+\frac{1}{3}e^{2ix} \right)^\alpha
    +w_q^3 e^{-iqx}(1-e^{ix})^\alpha \left( \frac{11}{6}-\frac{7}{6}e^{ix}+\frac{1}{3}e^{2ix} \right)^\alpha\\
    &=w_p^3 e^{-ix}(1-e^{ix})^\alpha \left( a_3-b_3i \right)^\alpha
    +w_q^3 e^{-iqx}(1-e^{ix})^\alpha \left( a_3-b_3i \right)^\alpha\\
&=w_p^3 e^{-ix}\left(2sin\frac{x}{2}\right)^{\alpha}e^{i\alpha(\frac{x}{2}-\frac{\pi}{2})} \left( a_3^2+b_3^2\right)^{\frac{\alpha}{2}} e^{i\alpha \theta}
+w_q^3 e^{-iqx}\left(2sin\frac{x}{2}\right)^{\alpha}e^{i\alpha(\frac{x}{2}-\frac{\pi}{2})} \left( a_3^2+b_3^2\right)^{\frac{\alpha}{2}} e^{i\alpha \theta}\\
&=\left(2sin\frac{x}{2}\right)^{\alpha} \left( a_3^2+b_3^2\right)^{\frac{\alpha}{2}}
\Big\{ w_p^3  e^{i \left[ \left (\alpha(\frac{x}{2}-\frac{\pi}{2}+\theta)-x\right ) \right] }
+w_q^3 e^{i \left[ \left (\alpha(\frac{x}{2}-\frac{\pi}{2}+\theta)-qx\right ) \right] }\Big \};\\
  \end{split}
\end{equation*}
and
\begin{equation*}
\begin{split}
  f_{A_{p,q}^{4,\alpha}}(x)&=\sum_{k=0}^{\infty}\phi_k^{{4,\alpha}}e^{i(k-1)x}
  =w_p^4 e^{-ix}\sum_{k=0}^{\infty}l_k^{{4,\alpha}}e^{ikx} +w_q^4 e^{-iqx}\sum_{k=0}^{\infty}l_k^{{4,\alpha}}e^{ikx}\\
  &=w_p^4 e^{-ix}(1-e^{ix})^\alpha \left( \frac{25}{12}-\frac{23}{12}e^{ix}+\frac{13}{12}e^{2ix}-\frac{1}{4}e^{3ix} \right)^\alpha\\
    &\quad +w_q^4 e^{-iqx}(1-e^{ix})^\alpha\left( \frac{25}{12}-\frac{23}{12}e^{ix}+\frac{13}{12}e^{2ix}-\frac{1}{4}e^{3ix} \right)^\alpha\\
    &=w_p^4 e^{-ix}(1-e^{ix})^\alpha \left( a_4-b_4i \right)^\alpha
    +w_q^4 e^{-iqx}(1-e^{ix})^\alpha \left( a_4-b_4i \right)^\alpha\\
&=w_p^4 e^{-ix}\left(2sin\frac{x}{2}\right)^{\alpha}e^{i\alpha(\frac{x}{2}-\frac{\pi}{2})} \left( a_4^2+b_4^2\right)^{\frac{\alpha}{2}} e^{i\alpha \theta}
+w_q^4 e^{-iqx}\left(2sin\frac{x}{2}\right)^{\alpha}e^{i\alpha(\frac{x}{2}-\frac{\pi}{2})} \left( a_4^2+b_4^2\right)^{\frac{\alpha}{2}} e^{i\alpha \theta}\\
&=\left(2sin\frac{x}{2}\right)^{\alpha} \left( a_4^2+b_4^2\right)^{\frac{\alpha}{2}}
\Big\{ w_p^4  e^{i \left[ \left (\alpha(\frac{x}{2}-\frac{\pi}{2}+\theta)-x\right ) \right] }
+w_q^4 e^{i \left[ \left (\alpha(\frac{x}{2}-\frac{\pi}{2}+\theta)-qx\right ) \right] }\Big \}.\\
  \end{split}
\end{equation*}
By the similar way, there exists 
\begin{equation*}
\begin{split}
 f_{(A_{p,q}^{\nu,\alpha})^T}(x)
=\left(2sin\frac{x}{2}\right)^{\alpha} \left( a_\nu^2+b_\nu^2\right)^{\frac{\alpha}{2}}
\Big\{ w_p^\nu  e^{-i \left[ \left (\alpha(\frac{x}{2}-\frac{\pi}{2}+\theta)-x\right ) \right] }
+w_q^\nu e^{-i \left[ \left (\alpha(\frac{x}{2}-\frac{\pi}{2}+\theta)-qx\right ) \right] }\Big \}.
  \end{split}
\end{equation*}
Therefore, it is easy to get
\begin{equation*}
\begin{split}
f^\nu_{p,q}(\alpha,x)&=\frac{1}{2}\left(\sum_{k=0}^{\infty}\phi_k^{\nu,\alpha}e^{i(k-1)x}+\sum_{k=0}^{\infty}\phi_k^{\nu,\alpha}e^{-i(k-1)x}\right)\\
&=\left(2sin\frac{x}{2}\right)^{\alpha}\left( a_\nu^2+b_\nu^2\right)^{\frac{\alpha}{2}}
 \cdot \left[w_p^\nu\cos\left (\alpha(\frac{x}{2}-\frac{\pi}{2}+\theta)-x\right )+w_q^\nu\cos\left (\alpha(\frac{x}{2}-\frac{\pi}{2}+\theta)-qx\right)\right].
\end{split}
\end{equation*}
\begin{description}
 \item[(2)] For $(p,q)=(1,q)$,  $q\geq 2$, we have  $A_{p,q}^{\nu,\alpha} =w^\nu_pA_{p}^{\nu,\alpha}+w^\nu_qA_{q}^{\nu,\alpha}$ and
\end{description}

\begin{equation*}
A_{p,q}^{\nu,\alpha}=\left [ \begin{matrix}
\phi_p^{\nu,\alpha}      &\phi_{p-1}^{\nu,\alpha}   & \cdots          &\phi_0^{\nu,\alpha}     &               &                  &                 &        \\
\phi_{p+1}^{\nu,\alpha} &\phi_p^{\nu,\alpha}   &\phi_{p-1}^{\nu,\alpha}&  \cdots          & \phi_0^{\nu,\alpha} &                  &                 &         \\
\phi_{p+2}^{\nu,\alpha}  &\phi_{p+1}^{\nu,\alpha}   &\phi_p^{\nu,\alpha}    &\phi_{p-1}^{\nu,\alpha} &\cdots         &\phi_0^{\nu,\alpha}     &                 &        \\
\vdots             &    \ddots          &\ddots           & \ddots           & \ddots        &   \cdots         & \ddots          &             \\
\phi_{n-2}^{\nu,\alpha}  &\cdots              & \ddots          &\phi_{p+1}^{\nu,\alpha} &\phi_{p}^{\nu,\alpha}&\phi_{p-1}^{\nu,\alpha} & \cdots          &\phi_0^{\nu,\alpha} \\
\vdots             &\ddots              & \cdots          & \ddots           & \ddots        &   \ddots         & \ddots          &\vdots  \\
\phi_{p+n-3}^{\nu,\alpha}&\cdots              &  \ddots         & \cdots           & \ddots        &\phi_{p+1}^{\nu,\alpha} &\phi_{p}^{\nu,\alpha}  &\phi_{p-1}^{\nu,\alpha} \\
\phi_{p+n-2}^{\nu,\alpha}&\phi_{p+n-3}^{\nu,\alpha}&\cdots&\phi_{n-2}^{\nu,\alpha}& \cdots &\phi_{p+2}^{\nu,\alpha}&\phi_{p+1}^{\nu,\alpha}&\phi_{p}^{\nu,\alpha}
 \end{matrix}
 \right ],
\end{equation*}
with
 \begin{equation*}
\phi_k^{{\nu,\alpha}}=\left\{ \begin{array}
 {l@{\quad} l}
w^\nu_ql_k^{{\nu,\alpha}},&0\leq k\leq q-2,\\
 w^\nu_ql_{k}^{{\nu,\alpha}} +w^\nu_pl_{k-q+1}^{{\nu,\alpha}},&k>q-2.
 \end{array}
 \right.
\end{equation*}
The generating functions of $A_{p,q}^{\nu,\alpha}$ and $(A_{p,q}^{{\nu,\alpha}})^T$ are
\begin{equation*}
  f_{A_{p,q}^{\nu,\alpha}}(x)=\sum_{k=0}^{\infty}\phi_k^{{\nu,\alpha}}e^{i(k-q)x},~~\mbox{and}~~ f_{(A_{p,q}^{\nu,\alpha})^T}(x)=\sum_{k=0}^{\infty}\phi_k^{{\nu,\alpha}}e^{-i(k-q)x},
\end{equation*}
respectively.  Then
 $$f^\nu_{p,q}(\alpha,x)=\frac{f_{A_{p,q}^{\nu,\alpha}}(x)\,+\,f_{(A_{p,q}^{\nu,\alpha})^T}(x)}{2},$$
 is the generating function of $H^\nu_{p,q}$.

By the similar way, for $q\geq 2$, their exists
\begin{equation*}
\begin{split}
f^\nu_{p,q}(\alpha,x)&=\left(2sin\frac{x}{2}\right)^{\alpha}\left( a_\nu^2+b_\nu^2\right)^{\frac{\alpha}{2}}
 \cdot \left[w_p^\nu\cos\left (\alpha(\frac{x}{2}-\frac{\pi}{2}+\theta)-x\right )+w_q^\nu\cos\left (\alpha(\frac{x}{2}-\frac{\pi}{2}+\theta)-qx\right)\right].
\end{split}
\end{equation*}
It can be noted that $f^\nu_{p,q}(\alpha,x)$ has the same form when $q\leq 0$ and $q \geq 2$, $p=1$, Then $H_{p,q}^{\nu}$ has the generating function
\begin{equation*}
\begin{split}
f^\nu_{p,q}(\alpha,x)&=\left(2sin\frac{x}{2}\right)^{\alpha}\left( a_\nu^2+b_\nu^2\right)^{\frac{\alpha}{2}}
 \cdot \left[w_p^\nu\cos\left (\alpha(\frac{x}{2}-\frac{\pi}{2}+\theta)-x\right )+w_q^\nu\cos\left (\alpha(\frac{x}{2}-\frac{\pi}{2}+\theta)-qx\right)\right].
\end{split}
\end{equation*}
\end{proof}

\begin{theorem}\label{theorem2.10} (Case $\nu=3,4$; Effective 4th order schemes)
Let $A_{\nu,\alpha}=:A_{p,q,r,s,\overline{p},\overline{q},\overline{r},\overline{s}}^{\nu,\alpha}$
 with  $1<\alpha<2$,  be given in (\ref{2.33}) and $ (p,q,r,s,\overline{p},\overline{q},\overline{r},\overline{s})=(1,-1,1,2,1,-1,1,3)$.
Then any eigenvalue $\lambda$ of $A_{\nu,\alpha}$ satisfies
$$
\Re(\lambda(A_{\nu,\alpha}))<0,~~\nu=3,4,
$$
and  the matrices $A_{\nu,\alpha}$ and $A_{\nu,\alpha}^T$ are negative definite.
\end{theorem}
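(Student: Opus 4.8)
The plan is to reduce the spectral statement to a scalar trigonometric inequality. By Lemma~\ref{lemma2.10}, every eigenvalue $\lambda$ of $A_{\nu,\alpha}$ satisfies $\Re(\lambda(A_{\nu,\alpha}))\le\lambda_{\max}(H_{\nu,\alpha})$, where $H_{\nu,\alpha}:=\tfrac12\big(A_{\nu,\alpha}+A_{\nu,\alpha}^{T}\big)$; and, applying Lemma~\ref{lemma2.9} to $-A_{\nu,\alpha}$, the matrices $A_{\nu,\alpha}$ and $A_{\nu,\alpha}^{T}$ (which share the symmetric part $H_{\nu,\alpha}$) are negative definite precisely when $H_{\nu,\alpha}$ is. Hence it is enough to prove $\lambda_{\max}(H_{\nu,\alpha})<0$. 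By (\ref{2.33}), $A_{\nu,\alpha}$ is a fixed, $x$-independent linear combination of the Toeplitz matrices $A_{p,q}^{\nu,\alpha}$ with $(p,q)\in\{(1,-1),(1,2),(1,3)\}$, and since $w_p^{\nu}+w_q^{\nu}$, $w_{p,q}^{\nu}+w_{r,s}^{\nu}$ and $w_{p,q,r,s}^{\nu}+w_{\overline{p},\overline{q},\overline{r},\overline{s}}^{\nu}$ each equal $1$ by (\ref{2.18})--(\ref{2.20}), the combining coefficients sum to $1$. Therefore $H_{\nu,\alpha}$ is Toeplitz, and its generating function is the corresponding linear combination of the functions $f_{p,q}^{\nu}(\alpha,x)$ furnished by Lemma~\ref{lemma2.12}.

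Substituting $(p,q,r,s,\overline{p},\overline{q},\overline{r},\overline{s})=(1,-1,1,2,1,-1,1,3)$ and evaluating the weights from (\ref{2.19})--(\ref{2.20}) (the relevant denominators do not vanish, so the weights are well defined), the generating function of $H_{\nu,\alpha}$ takes the form
$$
f_{\nu}(\alpha,x)=\Big(2\sin\tfrac{x}{2}\Big)^{\alpha}\big(a_{\nu}^{2}+b_{\nu}^{2}\big)^{\alpha/2}\,\Re\!\big(e^{i\Theta}Q_{\nu}(x)\big),\qquad \Theta:=\alpha\Big(\tfrac{x}{2}-\tfrac{\pi}{2}+\theta\Big),\quad \theta:=-\arctan\tfrac{b_{\nu}}{a_{\nu}},
$$
where $Q_{\nu}(x)=\sum_{k}\gamma_{k}^{\nu}e^{-ikx}$ is the explicit trigonometric polynomial assembled from the products of the shift-weights over $k\in\{-1,1,2,3\}$, with $\sum_{k}\gamma_{k}^{\nu}=1$ (for $\nu=4$ the $\gamma_{k}^{\nu}$ are the constants $\tfrac32,\tfrac14,-1,\tfrac14$; for $\nu=3$ they are affine in $\alpha$). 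The prefactor $(2\sin\tfrac{x}{2})^{\alpha}(a_{\nu}^{2}+b_{\nu}^{2})^{\alpha/2}$ is strictly positive on $(0,\pi]$ --- indeed $a_{\nu}^{2}+b_{\nu}^{2}=|p_{\nu}(e^{ix})|^{2}>0$, since the quadratic (resp.\ cubic) Lubich factor $p_{\nu}$ has no zero on the unit circle --- and $f_{\nu}(\alpha,0)=0$. So, by the Grenander--Szeg\"{o} theorem (Lemma~\ref{lemma2.11}), it suffices to show $G_{\nu}(\alpha,x):=\Re\!\big(e^{i\Theta}Q_{\nu}(x)\big)<0$ for all $x\in(0,\pi]$ and all $\alpha\in(1,2)$: this yields $f_{\min}<f_{\max}=0$, whence $\lambda(H_{\nu,\alpha})<0$ for every size of $H_{\nu,\alpha}$, which is exactly the assertion.

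The remaining task, and the analytic core of the theorem, is the pointwise inequality $G_{\nu}(\alpha,x)<0$. Writing $G_{\nu}(\alpha,x)=|Q_{\nu}(x)|\cos\big(\Theta+\arg Q_{\nu}(x)\big)$, the question is whether the phase $\Phi_{\nu}(\alpha,x):=\alpha\big(\tfrac{x}{2}-\tfrac{\pi}{2}+\theta(x)\big)+\arg Q_{\nu}(x)$ stays in an interval on which $\cos$ is negative. As $x\to0$ one has $\theta\to0$ and $\arg Q_{\nu}\to0$ (because $Q_{\nu}(0)=\sum_{k}\gamma_{k}^{\nu}=1>0$), so $\Phi_{\nu}\to-\alpha\pi/2\in(-\pi,-\tfrac{\pi}{2})$ and $G_{\nu}\to\cos(\alpha\pi/2)<0$, uniformly for $\alpha\in(1,2)$; hence $G_{\nu}<0$ on some $[0,\delta]$. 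On $[\delta,\pi]$, where $\Phi_{\nu}$ is an explicit smooth function of $x$ with derivative bounded uniformly in $\alpha$ and $\alpha$ enters only linearly, I would partition the compact rectangle $[\delta,\pi]\times[1,2]$ into finitely many cells on which $\Phi_{\nu}$ varies by a controlled amount and check, cell by cell, that $\Phi_{\nu}(\alpha,x)$ stays strictly inside $\big(\tfrac{\pi}{2}+2m\pi,\,\tfrac{3\pi}{2}+2m\pi\big)$ for an appropriate integer $m=m(\alpha,x)$ --- using monotonicity/interval estimates for $\theta(x)$ and $\arg Q_{\nu}(x)$ together with the fact that $Q_{\nu}(x)\ne0$ there; for $\nu=3$ the affine $\alpha$-dependence of the $\gamma_{k}^{\nu}$ has to be carried through. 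I expect this last step to be the main obstacle --- it is elementary but delicate --- and the most economical route is presumably the one used for the second-order WSLD operators in [\cite{Chen:13}], adapted to the wider stencils $k\in\{-1,1,2,3\}$ occurring here.
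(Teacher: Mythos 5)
Your reduction is precisely the one the paper uses: form the symmetric part $H^{\nu}_{\alpha}$ of $A_{\nu,\alpha}$, observe that it is Toeplitz with generating function the weighted combination of the $f^{\nu}_{p,q}(\alpha,x)$ from Lemma~\ref{lemma2.12}, bound its spectrum by the range of that function via the Grenander--Szeg\"{o} theorem (Lemma~\ref{lemma2.11}), and conclude with Lemmas~\ref{lemma2.10} and~\ref{lemma2.9}. Your supporting observations are correct and in places more careful than the paper's: the weights at each level do sum to $1$ (so $Q_{\nu}(0)=1$), the prefactor is positive on $(0,\pi]$ with $f^{\nu}(\alpha,0)=0$ forcing $f_{\max}=0$, and the limit $G_{\nu}\to\cos(\alpha\pi/2)<0$ as $x\to 0^{+}$ is a genuine piece of analysis the paper does not spell out. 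Your computed coefficients $\tfrac32,\tfrac14,-1,\tfrac14$ for $\nu=4$ and the affine $\alpha$-dependence for $\nu=3$ also check out.

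The one substantive gap is the pointwise inequality $f^{\nu}(\alpha,x)\le 0$ (your $G_{\nu}(\alpha,x)<0$ on $(0,\pi]$), which you leave as a sketched compactness/cell-decomposition plan rather than a completed argument. You should know, however, that the paper does not prove this step either: it simply asserts ``there exists $f^{\nu}(\alpha,x)\le 0$'' with reference to Figures~\ref{FIG.1}--\ref{FIG.2}, i.e., the analytic core of the theorem rests on graphical evidence in the published version as well. So your proposal is no less rigorous than the paper's proof; to make either fully rigorous one would have to execute the finite verification on $[\delta,\pi]\times[1,2]$ (or find a monotonicity argument for the phase $\Phi_{\nu}$), exactly as you anticipate.
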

\begin{proof}
   Take
$$
 H^\nu_{\alpha}=\frac{A_{\nu,\alpha}+A_{\nu,\alpha}^T}{2}
 =w^\nu_{p,q,r,s}w^\nu_{p,q}H^\nu_{p,q}+w^\nu_{p,q,r,s}w^\nu_{r,s}H^\nu_{r,s}
 +w^\nu_{\overline{p},\overline{q},\overline{r},\overline{s}}w^\nu_{\overline{p},\overline{q}}H^\nu_{\overline{p},\overline{q}}
 +w^\nu_{\overline{p},\overline{q},\overline{r},\overline{s}}w^\nu_{\overline{r},\overline{s}}H^\nu_{\overline{r},\overline{s}},
$$
 where $H^\nu_{p,q}$, $H^\nu_{r,s}$, $H^\nu_{\overline{p},\overline{q}}$ and $H^\nu_{\overline{r},\overline{s}}$ are defined in (\ref{2.37}).  Then
 \begin{equation*}
 \begin{split}
   f^\nu(\alpha,x)=&w^\nu_{p,q,r,s}w^\nu_{p,q}f^\nu_{p,q}(\alpha,x)+w^\nu_{p,q,r,s}w^\nu_{r,s}f^\nu_{r,s}(\alpha,x)+w^\nu_{\overline{p},\overline{q},\overline{r},\overline{s}}w^\nu_{\overline{p},\overline{q}}f^\nu_{\overline{p},\overline{q}}(\alpha,x)
 +w^\nu_{\overline{p},\overline{q},\overline{r},\overline{s}}w^\nu_{\overline{r},\overline{s}}f^\nu_{\overline{r},\overline{s}}(\alpha,x),
 \end{split}
 \end{equation*}
 is the generating function of $H^\nu_\alpha $,
 where $f^\nu_{p,q}(\alpha,x)$, $f^\nu_{r,s}(\alpha,x)$, $f^\nu_{\overline{p},\overline{q}}(\alpha,x)$ and $f^\nu_{\overline{r},\overline{s}}(\alpha,x)$ are given by (\ref{2.36}).   And there exists $f^\nu(\alpha,x) \leq 0$  (see Figs. \ref{FIG.1}-\ref{FIG.2}), when $ (p,q,r,s,\overline{p},\overline{q},\overline{r},\overline{s})=(1,-1,1,2,1,-1,1,3)$.

Since $f^\nu(\alpha,x)$ is not identically zero for $\alpha \in (1,2)$,   from Lemma \ref{lemma2.11},
it implies that $\lambda (H^\nu_\alpha)<0$, and $H^\nu_\alpha$ is negative definite.
Then we obtain  $\Re(\lambda(A_{\nu,\alpha}))<0$ by Lemma  \ref{lemma2.10},
and the matrices $A_{\nu,\alpha}$ and $A_{\nu,\alpha}^T$ are negative definite from  Lemma \ref{lemma2.9}.
\end{proof}


\begin{figure}[t]
    \begin{minipage}[t]{0.45\linewidth}
    \includegraphics[scale=0.47]{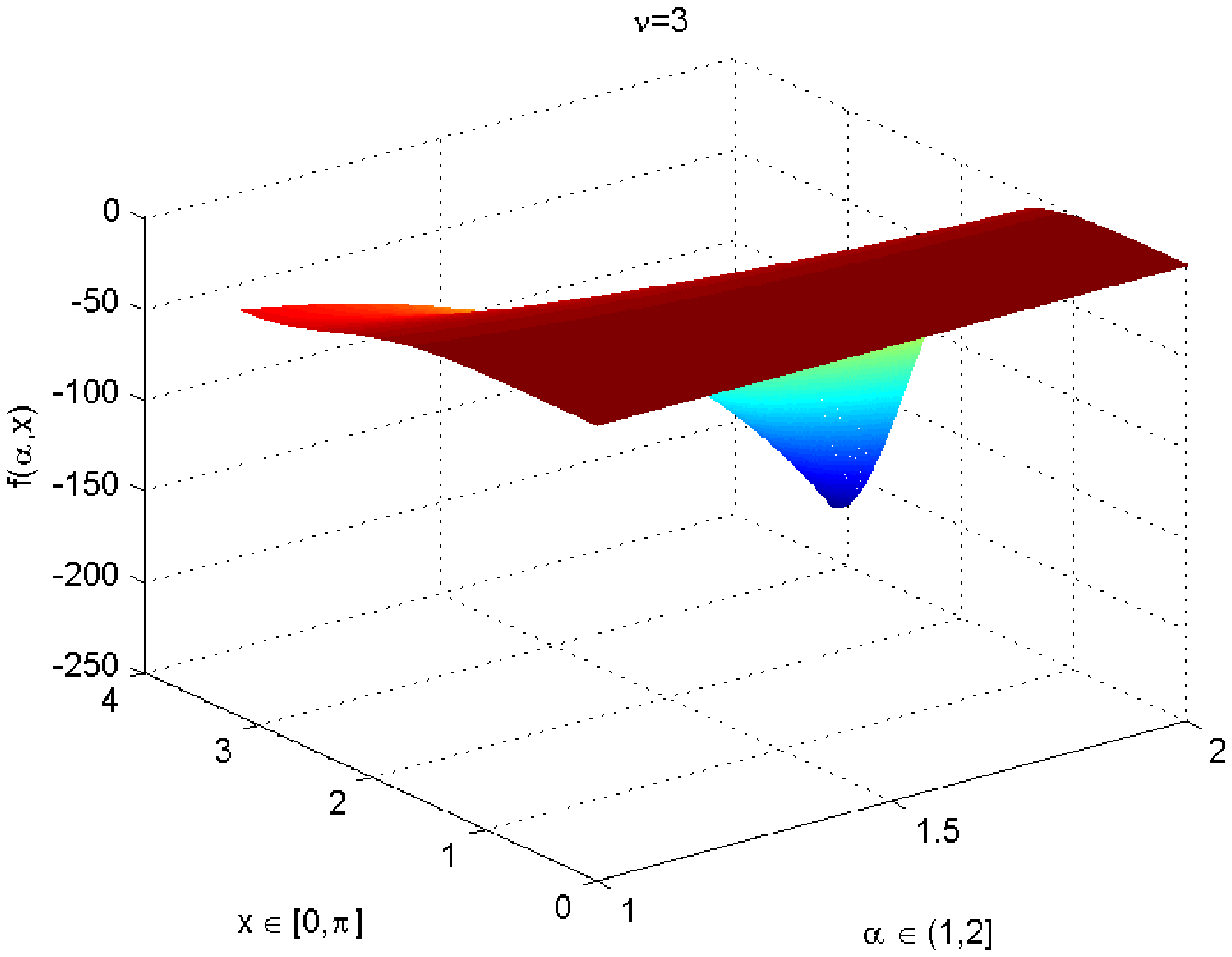}
    \caption{$f^\nu(\alpha,x)$ for $\nu=3$}  \label{FIG.1}
    \end{minipage}
    \begin{minipage}[t]{0.45\linewidth}
    \includegraphics[scale=0.47]{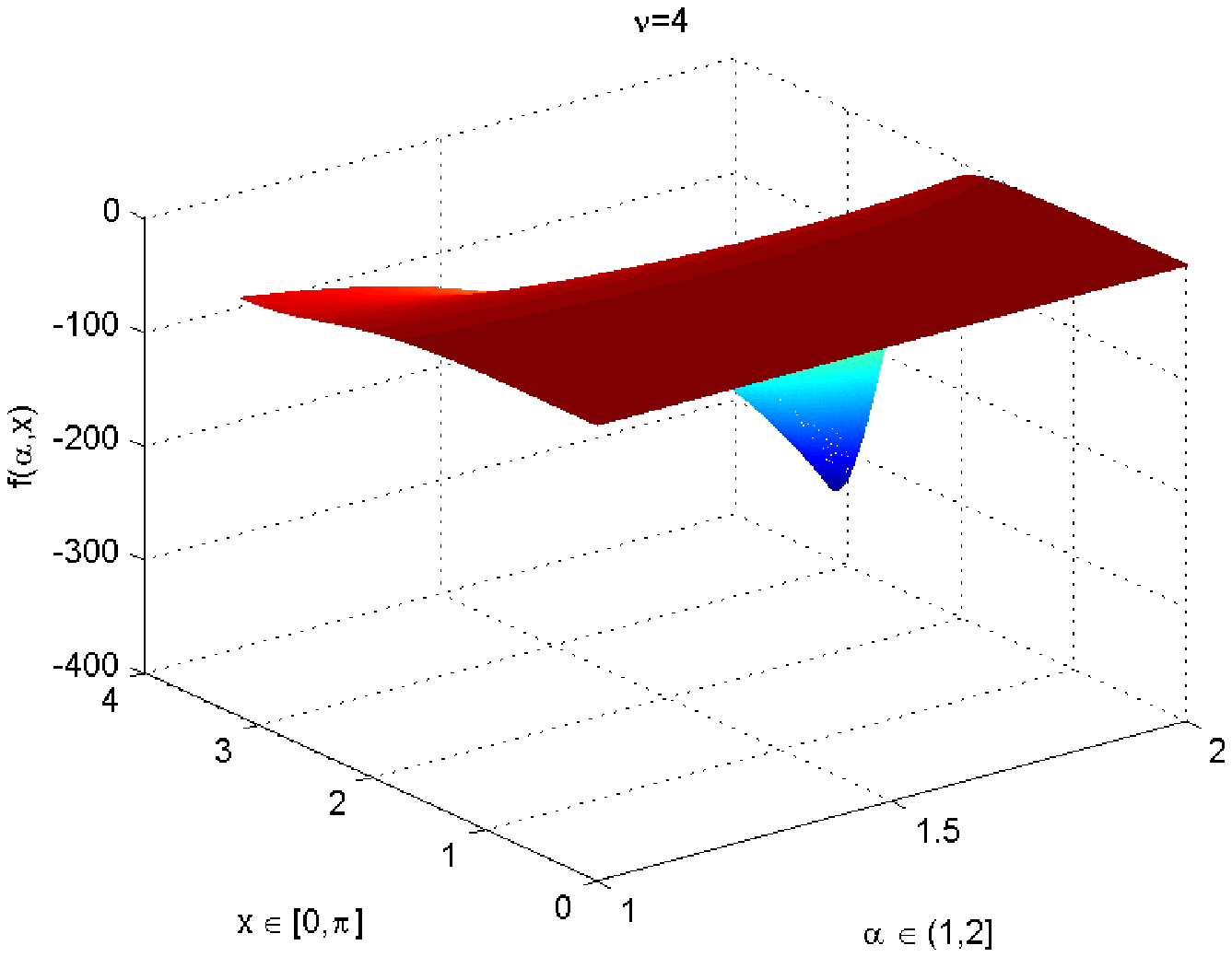}
    \caption{$f^\nu(\alpha,x)$ for $\nu=4$}  \label{FIG.2}
    \end{minipage}
\end{figure}


\section{Simple application to space fractional diffusion equation }

Similar to the discussions in [\cite{Chen:13}],  in this section,
we apply the 4th order discretizations to solve the following fractional diffusion equation with variable coefficients
\begin{equation}\label{3.1}
\frac{\partial u(x,t) }{\partial t}=d_{+}(x) _{x_L}D_x^{\alpha}u(x,t)+d_{-}(x)  _{x}D_{x_R}^{\alpha}u(x,t) +f(x,t).
\end{equation}
In the time direction, the Crank-Nicolson scheme is used.
 The 4th order left fractional approximation operator
(\ref{2.29}), and right fractional approximation operator
(\ref{2.34}) are respectively used to discretize the left Riemann-Liouville fractional
derivative, and right Riemann-Liouville fractional derivative.

Let the mesh points $x_i=x_L+ih$, $i=-m,\ldots, 0,1,\ldots,N_x-1,N_x,\ldots,N_x+m$, with $m$ in (\ref{2.27})
and $t_n=n\tau$, $0\leq n \leq {N_t}$, where $h=(x_R-x_L)/{N_x}$, $\tau=T/{N_t}$,
i.e., $h$ is the uniform space step size and $\tau$ the time step size.
Taking  $u_{i}^n$ as the approximated value of $u(x_i,t_n)$ and  $d_{+,i}=d_{+}(x_i)$, $d_{-,i}=d_{-}(x_i)$,
$f_i^{n+1/2}=f(x_i,t_{n+1/2})$, where $t_{n+1/2}=(t_n+t_{n+1})/2$.

The full discretization of (\ref{3.1}) has the following form [\cite{Chen:13}]
 \begin{equation}\label{3.2}
\begin{split}
 &  \left[I - \frac{\tau }{2h^{{\alpha}}}\left(  D_{+}A_{{\nu,\alpha}}+D_{-}A_{{\nu,\alpha}}^T \right) \right] U^{n+1}
= \left[I+  \frac{\tau }{2h^{{\alpha}}}\left(  D_{+}A_{{\nu,\alpha}}+D_{-}A_{{\nu,\alpha}}^T  \right) \right] U^{n}+\tau F^{n+1/2},
\end{split}
\end{equation}
where  $A_{\nu,\alpha}=:A_{p,q,r,s,\overline{p},\overline{q},\overline{r},\overline{s}}^{\nu,\alpha}$
 with  $1<\alpha<2$, $\nu=3,4$, are given in (\ref{2.33}), and
\begin{equation}\label{3.3}
D_{+}=\left [ \begin{matrix}
  d_{+,0}                        \\
       & d_{+,1}                  \\
       &       & \ddots            \\
       &       &      &d_{+,N_x}
 \end{matrix}
 \right ],
 \quad
D_{-}=\left [ \begin{matrix}
  d_{-,0}                        \\
       & d_{-,1}                  \\
       &       & \ddots            \\
       &       &      &d_{-,N_x}
 \end{matrix}
 \right ],
\end{equation}
and
 \begin{equation*}
 U^{n}=[u_0^n,u_1^n,\ldots,u_{N_x}^n]^{\rm T}, ~~F^{n+1/2}=[f_0^{n+1/2},f_1^{n+1/2},\ldots,f_{N_x}^{n+1/2}]^{\rm T}.
  \end{equation*}

By the same way given in [\cite{Chen:13}], we can theoretically prove that the difference scheme is unconditionally stable and 4th order
convergent in space directions and 2nd order convergent in time direction; the proofs are omitted here.

\begin{theorem}(\cite{Chen:13})
Let $D_{-}=\kappa_\alpha D_{+}$,  where $\kappa_\alpha$ is any
given nonnegative constant,
then the difference scheme (\ref{3.2})  with $\alpha \in (1,2)$
is unconditionally stable.
\end{theorem}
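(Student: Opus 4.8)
The plan is to establish $L^2$-type unconditional stability by a discrete energy argument in a norm weighted by the (diagonal) diffusion matrix, feeding in the negative-definiteness of the symmetric part of $A_{\nu,\alpha}$ provided by Theorem~\ref{theorem2.10}. Writing $A:=A_{\nu,\alpha}$, I would first use the hypothesis $D_-=\kappa_\alpha D_+$ to collapse the spatial operator in (\ref{3.2}),
\begin{equation*}
D_+A+D_-A^T=D_+\bigl(A+\kappa_\alpha A^T\bigr)=:D_+\widetilde A ,
\end{equation*}
and then record the decisive identity
\begin{equation*}
\widetilde A+\widetilde A^T=(1+\kappa_\alpha)\bigl(A+A^T\bigr)=2(1+\kappa_\alpha)H^\nu_\alpha ,
\end{equation*}
in which $H^\nu_\alpha$ is exactly the matrix shown to be negative definite in Theorem~\ref{theorem2.10}. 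Consequently $v^T\widetilde A v=(1+\kappa_\alpha)\,v^TH^\nu_\alpha v\le 0$ for every real vector $v$, with equality only when $v=0$.

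Next I would introduce the inner product $(u,v):=u^TD_+^{-1}v$ and its norm $\|\cdot\|$; this is the point at which the positivity $d_+(x)>0$ is needed (if some $d_{+,i_0}$ vanishes, the corresponding row of (\ref{3.2}) decouples into the trivial recursion $u^{n+1}_{i_0}=u^n_{i_0}+\tau f^{n+1/2}_{i_0}$ and can be dealt with separately). Rewriting (\ref{3.2}) as
\begin{equation*}
U^{n+1}-U^n=\frac{\tau}{2h^{\alpha}}\,D_+\widetilde A\,(U^{n+1}+U^n)+\tau F^{n+1/2},
\end{equation*}
multiplying by $D_+^{-1}$, and pairing with $U^{n+1}+U^n$ in the Euclidean inner product, the left-hand side telescopes to $\|U^{n+1}\|^2-\|U^n\|^2$ (because $D_+^{-1}$ is symmetric and the cross terms cancel), while the leading right-hand term equals $\frac{\tau}{2h^{\alpha}}(1+\kappa_\alpha)(U^{n+1}+U^n)^TH^\nu_\alpha(U^{n+1}+U^n)\le 0$, so that
\begin{equation*}
\|U^{n+1}\|^2-\|U^n\|^2\le \tau\,(U^{n+1}+U^n)^TD_+^{-1}F^{n+1/2}.
\end{equation*}
A Cauchy--Schwarz bound on the right, cancellation of the common factor $\|U^{n+1}\|+\|U^n\|$, and summation over $n$ will then give $\|U^n\|\le\|U^0\|+\tau\sum_{k=0}^{n-1}\|F^{k+1/2}\|$ for all $\tau,h>0$, which is unconditional stability; unique solvability of each time step comes for free, since in the homogeneous case ($F=0$, $U^n=0$) the estimate forces $U^{n+1}=0$.

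An alternative, equivalent route works directly with the amplification matrix $M:=\frac{\tau}{2h^{\alpha}}D_+\widetilde A$: the congruent matrix $\widehat M:=D_+^{-1/2}MD_+^{1/2}=\frac{\tau}{2h^{\alpha}}D_+^{1/2}\widetilde A D_+^{1/2}$ satisfies $\widehat M+\widehat M^T=\frac{\tau}{h^{\alpha}}(1+\kappa_\alpha)D_+^{1/2}H^\nu_\alpha D_+^{1/2}$, which is negative semidefinite, whence $\|(I+\widehat M)x\|_2\le\|(I-\widehat M)x\|_2$ and therefore $\|(I-\widehat M)^{-1}(I+\widehat M)\|_2\le1$; since $(I-M)^{-1}(I+M)$ is similar to $(I-\widehat M)^{-1}(I+\widehat M)$, the Crank--Nicolson iteration operator is a contraction in the $D_+^{-1}$-weighted norm — the same conclusion. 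I expect the only real obstacle to be the bookkeeping that makes the weighted inner product legitimate (invertibility of $D_+$, or isolating its degenerate components) together with the clean extraction of $H^\nu_\alpha$ from $\widetilde A$; once Theorem~\ref{theorem2.10} is in hand the sign information is essentially free and the remainder is the routine Crank--Nicolson energy estimate.
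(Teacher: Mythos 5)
Your argument is correct and rests on the same two pillars as the proof this paper defers to \cite{Chen:13}: the collapse $D_{+}A+D_{-}A^{T}=D_{+}(A+\kappa_\alpha A^{T})$ afforded by $D_{-}=\kappa_\alpha D_{+}$, and the negative definiteness of $H^{\nu}_{\alpha}=\tfrac12(A+A^{T})$ from Theorem~\ref{theorem2.10}. Where you genuinely depart from the cited proof is in how the sign information is converted into stability: \cite{Chen:13} argues spectrally, noting that $D_{+}(A+\kappa_\alpha A^{T})$ is similar to $D_{+}^{1/2}(A+\kappa_\alpha A^{T})D_{+}^{1/2}$, whose Hermitian part is $(1+\kappa_\alpha)D_{+}^{1/2}H^{\nu}_{\alpha}D_{+}^{1/2}$, so by Lemma~\ref{lemma2.10} every eigenvalue has negative real part and the Crank--Nicolson amplification matrix $(I-M)^{-1}(I+M)$ has all eigenvalues of modulus less than one. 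You instead prove the operator-norm bound $\|(I-\widehat M)^{-1}(I+\widehat M)\|_2\le 1$ (equivalently, contraction in the $D_{+}^{-1}$-weighted norm), via the identity $\|(I+\widehat M)x\|_2^2-\|(I-\widehat M)x\|_2^2=2x^{T}(\widehat M+\widehat M^{T})x\le 0$. This buys something real: the matrices here are far from normal, and a spectral radius below one does not by itself yield a power bound uniform in $h$, whereas your estimate does, and it also hands you the inhomogeneous bound $\|U^{n}\|\le\|U^{0}\|+\tau\sum_{k}\|F^{k+1/2}\|$ and unique solvability in one stroke. Two small caveats. First, both routes need $D_{+}$ invertible --- and for the weighted norm to be uniformly equivalent to the Euclidean one as $h\to 0$ you need $d_{+}$ bounded away from zero; note the paper's own numerical example has $d_{+}(0)=0$, so this hypothesis is tacit in the theorem as stated. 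Second, your parenthetical for a vanishing $d_{+,i_0}$ is too quick: that row does reduce to $u^{n+1}_{i_0}=u^{n}_{i_0}+\tau f^{n+1/2}_{i_0}$, but the remaining rows still contain $u_{i_0}$ through the full matrix $A$, so the system does not decouple; the correct fix is to solve that component explicitly and feed it into the other equations as a known source, after which your energy estimate applies to the reduced system.
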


\begin{theorem}(\cite{Chen:13})
Let $u(x_i,t_n)$ be the exact solution of (\ref{3.1})  with $\alpha \in (1,2)$, and  $u_i^n$ be the  solution of
the  finite difference scheme (\ref{3.2}).
If $D_{-}=\kappa_\alpha D_{+}$,  then there is a positive constant $C$ such that
\begin{equation*}
  \begin{split}
||u(x_i,t_n)-u_i^n|| \leq  C (\tau^2+h^4), \quad i=0,2,\ldots,N_x;\,n=0,1,\ldots,N_t,
  \end{split}
  \end{equation*}
where $\kappa_\alpha$ is any
given nonnegative constant.
\end{theorem}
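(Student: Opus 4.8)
The plan is to run the classical consistency–stability argument for Crank–Nicolson schemes, recycling exactly the ingredients already assembled above: the fourth-order spatial consistency of the telescoped operators in (\ref{2.29}) and (\ref{2.34}) (i.e.\ Theorem \ref{theorem2.7} and the coefficients $\varphi_k^{\nu,\alpha}$ of (\ref{2.30})), and the negative definiteness of the symmetrized matrix $H^\nu_\alpha$ established in Theorem \ref{theorem2.10}.

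First I would estimate the local truncation error. Substituting the exact solution $u(x_i,t_n)$ into (\ref{3.2}) and dividing by $\tau$ defines a defect $R_i^{n+1/2}$. Its temporal part is controlled by Taylor expansion about $t_{n+1/2}$: the Crank–Nicolson averaging of $\partial_t u$ and of $d_\pm(x_i)\,{}_{x_L}D_x^{\alpha}u$, $d_\pm(x_i)\,{}_{x}D_{x_R}^{\alpha}u$ each contribute $\mathcal{O}(\tau^2)$, provided $\partial_t^3 u$ and $\partial_t\,{}_{x_L}D_x^{\alpha}u$, $\partial_t\,{}_{x}D_{x_R}^{\alpha}u$ are bounded. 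Its spatial part is precisely the error quantified by (\ref{2.29}) and (\ref{2.34}) for the choice $(p,q,r,s,\overline p,\overline q,\overline r,\overline s)=(1,-1,1,2,1,-1,1,3)$, hence $\mathcal{O}(h^4)$ under the stated regularity of $u$, $_{x_L}D_x^{\alpha+4}u$ and $_{x}D_{x_R}^{\alpha+4}u$ (taken on the zero-extended function); multiplication by the bounded coefficients $d_\pm$ preserves this order. Thus $\max_i|R_i^{n+1/2}|\le C(\tau^2+h^4)$ uniformly in $n$.

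Next, set $e_i^n=u(x_i,t_n)-u_i^n$, $e^n=[e_0^n,\dots,e_{N_x}^n]^{\mathrm T}$, with $e^0=0$ and vanishing boundary components; subtracting (\ref{3.2}) from the exact identity gives the error recursion
\begin{equation*}
\Big(I-\tfrac{\tau}{2h^{\alpha}}M\Big)e^{n+1}=\Big(I+\tfrac{\tau}{2h^{\alpha}}M\Big)e^{n}+\tau R^{n+1/2},\qquad M:=D_{+}A_{\nu,\alpha}+D_{-}A_{\nu,\alpha}^{\mathrm T}.
\end{equation*}
To propagate this I would exploit $D_{-}=\kappa_\alpha D_{+}$. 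Since $D_{+}$ is diagonal with entries bounded between two positive constants, the weighted inner product $(U,V)_{D_+^{-1}}=V^{\mathrm T}D_+^{-1}U$ is equivalent to the discrete $L^2$ one, and $M=D_{+}(A_{\nu,\alpha}+\kappa_\alpha A_{\nu,\alpha}^{\mathrm T})$ gives $(MU,U)_{D_+^{-1}}=U^{\mathrm T}(A_{\nu,\alpha}+\kappa_\alpha A_{\nu,\alpha}^{\mathrm T})U=(1+\kappa_\alpha)\,U^{\mathrm T}H^\nu_\alpha U\le 0$ by Theorem \ref{theorem2.10}; in particular $I-\tfrac{\tau}{2h^{\alpha}}M$ is invertible, so the scheme is well posed. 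Pairing the error recursion with $e^{n+1}+e^n$ in $(\cdot,\cdot)_{D_+^{-1}}$, using $(M(e^{n+1}+e^n),e^{n+1}+e^n)_{D_+^{-1}}\le0$ and Cauchy–Schwarz, yields $\|e^{n+1}\|_{D_+^{-1}}-\|e^{n}\|_{D_+^{-1}}\le\tau\|R^{n+1/2}\|_{D_+^{-1}}$. Summing over time levels, using $e^0=0$, $n\tau\le T$, and converting back via norm equivalence and $\sqrt{h}\,\|R^{n+1/2}\|_{\ell^2}\le\sqrt{x_R-x_L}\,\max_i|R_i^{n+1/2}|$, produces $\|e^n\|\le C(\tau^2+h^4)$.

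I expect the only genuine obstacle to be organizational rather than conceptual: assembling the spatial truncation term correctly out of the nested weighting/shifting that produced $\varphi_k^{\nu,\alpha}$, and verifying that the regularity needed for the $\mathcal{O}(h^4)$ consistency bound is exactly what the hypotheses inherited from Theorem \ref{theorem2.7} supply after zero extension. The stability half is immediate once the weighted inner product $(\cdot,\cdot)_{D_+^{-1}}$ is used to absorb $D_{-}=\kappa_\alpha D_{+}$, since all the spectral content is already packaged in Theorem \ref{theorem2.10} together with Lemmas \ref{lemma2.9}, \ref{lemma2.10} and \ref{lemma2.11}.
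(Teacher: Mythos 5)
Your proposal is essentially sound, but note first that this paper does not actually supply a proof: the theorem is quoted from \cite{Chen:13} and the text explicitly says the proofs are omitted. So the comparison is with the argument of the cited reference, which, like yours, is a consistency-plus-stability argument resting on Theorem \ref{theorem2.10}. Where you genuinely diverge is in the stability half. The reference handles the variable coefficients by a similarity transformation: $M=D_{+}(A_{\nu,\alpha}+\kappa_\alpha A_{\nu,\alpha}^{\mathrm T})$ is similar to $D_{+}^{1/2}(A_{\nu,\alpha}+\kappa_\alpha A_{\nu,\alpha}^{\mathrm T})D_{+}^{1/2}$, whose symmetric part is $(1+\kappa_\alpha)D_{+}^{1/2}H^\nu_\alpha D_{+}^{1/2}\prec 0$, so all eigenvalues of $M$ have negative real part and the Crank--Nicolson amplification matrix has spectral radius below one. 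Your energy estimate in the $D_{+}^{-1}$-weighted inner product is strictly stronger: it yields the telescoping bound $\|e^{n+1}\|_{D_+^{-1}}-\|e^n\|_{D_+^{-1}}\le\tau\|R^{n+1/2}\|_{D_+^{-1}}$ directly, rather than inferring norm control from a spectral-radius statement about a non-normal matrix, which is the (well-known) soft spot of the published argument.

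Two caveats you should make explicit. First, your weighted inner product and the final norm-equivalence step require $d_{+}$ to be bounded away from zero uniformly in $h$; otherwise the constants $\sqrt{\max_i d_{+,i}/\min_i d_{+,i}}$ entering the conversion back to the unweighted norm are not $h$-independent. The paper's own test problem has $d_{+}(x)=x^{\alpha}$ on $[0,2]$, so $d_{+,0}=0$ and $d_{+,1}=h^{\alpha}\to 0$; you would need to either assume $d_{+}\ge c_0>0$ (as the similarity argument implicitly does too) or treat the degenerate rows separately (a row with $d_{+,i}=d_{-,i}=0$ decouples and is trivially stable). Second, the $\mathcal{O}(h^4)$ consistency bound inherited from Theorem \ref{theorem2.7} is stated for the zero-extended function and presupposes regularity of $_{x_L}D_x^{\alpha+4}u$ and $_{x}D_{x_R}^{\alpha+4}u$ across the boundary; you correctly flag this, and it is an assumption the paper itself leaves implicit rather than something your argument introduces. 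With these hypotheses stated, your proof is complete and, if anything, tighter than the one being cited.
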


\section{Numerical Results}

In this section, we numerically verify the above theoretical results including convergence
rates and numerical stability.  And the $ l_\infty$ norm is used to measure the numerical errors.

Consider the fractional diffusion
equation (\ref{3.1}) [\cite{Chen:13}] in the domain $0< x < 2 $, $0 < t \leq 1$, with the variable  coefficients $d_{+}(x)=x^\alpha$, $d_{-}(x)=2x^\alpha$, the forcing function
\begin{equation*}
\begin{split}
 f(x,t)=&cos(t+1)x^4(2-x)^4 -x^\alpha sin(t+1) \Big[\frac{\Gamma(9)}{\Gamma(9-\alpha)}(x^{8-\alpha}+2(2-x)^{8-\alpha})\\
 &-8\frac{\Gamma(8)}{\Gamma(8-\alpha)}(x^{7-\alpha}+2(2-x)^{7-\alpha})
 +24\frac{\Gamma(7)}{\Gamma(7-\alpha)}(x^{6-\alpha}+2(2-x)^{6-\alpha})\\
 &-32\frac{\Gamma(6)}{\Gamma(6-\alpha)}(x^{5-\alpha}+2(2-x)^{5-\alpha})
 +16\frac{\Gamma(5)}{\Gamma(5-\alpha)}(x^{4-\alpha}+2(2-x)^{4-\alpha}) \Big],
  \end{split}
\end{equation*}
and the initial condition $u(x,0)=sin(1)x^4(2-x)^4$, the boundary conditions $u(0,t)=u(1,t)=0$,
and the exact solution of the equation is $u(x,t)=sin(t+1)x^4(2-x)^4.$

\begin{table}[h]\fontsize{9.5pt}{12pt}\selectfont
 \begin{center}
  \caption {The maximum errors and convergent orders for the scheme (\ref{3.2})
  of the fractional  diffusion equation (\ref{3.1}) at t=1 and $\tau=h^2$, where $ (p,q,r,s,\overline{p},\overline{q},\overline{r},\overline{s})=(1,-1,1,2,1,-1,1,3)$,  $\nu=3,4$. } \vspace{5pt}
\begin{tabular*}{\linewidth}{@{\extracolsep{\fill}}*{8}{c}}                                    \hline  
$\nu=3,h$& $\alpha=1.1$ &  Rate       & $\alpha=1.5$    & Rate       & $\alpha=1.8$ &   Rate   \\\hline
~~~1/10&  1.8349e-02  &             & 2.1073e-02      &            & 2.3337e-02   &           \\
~~~1/20&  1.4015e-03  &  3.7107     & 1.8381e-03      & 3.5191     & 2.3106e-03   & 3.3362     \\
~~~1/40&  8.8517e-05  &  3.9849     & 1.2004e-04      & 3.9367     & 1.6131e-04   & 3.8404      \\
~~~1/80&  5.2342e-06  &  4.0799     & 7.5382e-06      & 3.9931     & 1.0478e-05   & 3.9443       \\ \hline
$\nu=4,h$& $\alpha=1.1$ &  Rate       & $\alpha=1.5$    & Rate       & $\alpha=1.8$ &   Rate        \\\hline
~~~1/10&  1.7241e-02  &             & 9.6037e-03      &            & 5.8735e-03   &                \\
~~~1/20&  7.9269e-04  &  4.4430     & 5.2600e-04      & 4.1905     & 3.4793e-04   & 4.0774          \\
~~~1/40&  3.4558e-05  &  4.5197     & 2.4926e-05      & 4.3994     & 2.1158e-05   & 4.0395           \\
~~~1/80&  1.4824e-06  &  4.5430     & 1.1512e-06      & 4.4364     &1.3045e-06    & 4.0196            \\ \hline
    \end{tabular*}\label{tab:2}
  \end{center}
\end{table}
 Table \ref{tab:2} shows the maximum errors at time $t=1$, and the time and space stepsizes are taken as
$\tau=h^2$. The numerical results confirm the $\mathcal{O}(\tau^2+h^4)$ order convergence.

\section{Conclusions}
For solving classical differential equations, usually people think that the high order schemes can reduce computational cost for getting some requested accuracy, i.e., they think that the improved accuracy can overtake the increased workload. For the issue of computational cost of fractional differential equations, the high order schemes play more fundamental role; since they can greatly increase the accuracy but without adding new cost. As the sequel of [\cite{Chen:13}], based on Lubich's 3rd and 4th operators, this paper further provides new 4th order schemes for space fractional derivatives. The effectiveness of the new discretizations is verified theoretically and numerically.

\section*{Acknowledgments} This work was supported by the National Natural Science Foundation of China under
Grant No. 11271173, and the Program for New Century Excellent Talents in University under Grant No.
NCET-09-0438.

\end{document}